\newcommand{\arxiv}[1]{\href{http://arxiv.org/abs/#1}{\tt arXiv:\nolinkurl{#1}}}
\newcommand{\arXiv}[1]{\href{http://arxiv.org/abs/#1}{\tt arXiv:\nolinkurl{#1}}}
\numberwithin{equation}{section}
\newtheorem{Proposition}[equation]{Proposition}
\newtheorem{Lemma}[equation]{Lemma}
\newtheorem{Theorem}[equation]{Theorem}
\newtheorem{Corollary}[equation]{Corollary}
\theoremstyle{definition}  
\newcommand\Comment[2][\relax]{\space\par\medskip\noindent%
   \fbox{\begin{minipage}{\textwidth}\textbf{Comment\ifx\relax#1\else---#1\fi}\newline%
        #2\end{minipage}}\medskip
}
\def\bi{\text{\boldmath$i$}}
\def\bj{\text{\boldmath$j$}}
\def\b1{\text{\boldmath$1$}}
\def\pmod#1{\text{ }(\text{\rm mod } #1)\,}
\newcommand{\tw}{\operatorname{s}}
\newcommand{\Pol}{{P}}
\newcommand{\End}{\operatorname{End}}
\newcommand{\im}{\operatorname{im}}
\newcommand{\res}{\operatorname{res}}
\newcommand{\Stab}{\operatorname{Stab}}
\newcommand{\op}{\operatorname{op}}
\newcommand{\words}{\langle I\rangle}
\newcommand{\Z}{\mathbb{Z}}
\def\phi{{\varphi}}
\newcommand{\ga}{\gamma}
\newcommand{\Ga}{\Gamma}
\newcommand{\la}{\lambda}
\newcommand{\La}{\Lambda}
\newcommand{\al}{\alpha}
\newcommand{\be}{\beta}
\def\Si{\mathfrak{S}}
\newcommand{\si}{\sigma}
\newcommand{\de}{\delta}
\newcommand{\De}{\Delta}
\newcommand{\Ind}{{\mathrm {Ind}}}
\newcommand{\Sym}{{\La}}
\newcommand{\A}{{\mathscr A}}
\newcommand{\shift}{{\tt sh}}
\newcommand\Par{{\mathscr P}}
\newcommand\StTab{{\mathscr T}}
\def\T{{\mathtt T}}
\def\Stab{{\mathtt S}}
\def\spa{\operatorname{span}}
\def\into{{\hookrightarrow}}
\def\iso{\stackrel{\sim}{\longrightarrow}}
\colorlet{darkgreen}{green!50!black}
\tikzset{dots/.style={very thick,loosely dotted},
         greendot/.style={fill,circle,color=darkgreen,inner sep=1.5pt,outer sep=0}
}
\def\greendot(#1,#2){\node[greendot] at(#1,#2){}}
\newenvironment{braid}{
  \begin{tikzpicture}[baseline=6mm,blue,line width=1pt, scale=0.4,
                      draw/.append style={rounded corners},
                      every node/.append style={font=\tiny}]%
  }{\end{tikzpicture}
}
\author[Alexander Kleshchev]{\sc Alexander S. Kleshchev}
\address{Department of Mathematics\\ University of Oregon\\
Eugene\\ OR~97403, USA}
\email{klesh@uoregon.edu}
\author[Joseph Loubert]{\sc Joseph W. Loubert}
\address{Department of Mathematics\\ University of Oregon\\
Eugene\\ OR~97403, USA}
\email{loubert@uoregon.edu}
\author[Vanessa Miemietz]{\sc Vanessa Miemietz}
\address{School of Mathematics\\ University of East Anglia\\
Norwich\\ NR4 7TJ, UK}
\email{v.miemietz@uea.ac.uk}
\thanks{Research supported in part by the NSF grant no. DMS-1161094, the Humboldt Foundation, and the ERC grant PERG07-GA-2010-268109. The paper has been completed at the University of Stuttgart. The authors thank Steffen Koenig for hospitality.}
\begin{document}

\begin{abstract}
We prove that the Khovanov-Lauda-Rouquier algebras $R_\al$ of type $A_\infty$ are (graded) affine cellular in the sense of Koenig and Xi. In fact, we establish a stronger property, namely that the affine cell ideals in $R_\al$ are generated by idempotents.  This in particular implies the (known) result that the global dimension of $R_\al$ is finite, and yields a theory of standard and reduced standard modules for $R_\al$. 
\end{abstract}

\title[Affine Cellularity of KLR algebras in type A]{Affine Cellularity of Khovanov-Lauda-Rouquier algebras in type A}

\maketitle

\section{Introduction}
The goal of this paper is to establish (graded) affine cellularity for the Khovanov-Lauda-Rouquier algebras $R_\al$ of type $A_\infty$ in the sense of Koenig and Xi \cite{KoXi}. In fact, we construct a chain of affine cell ideals in $R_\al$ which are generated by idempotents. This stronger property is analogous to quasi-heredity for finite dimensional algebras, and by a general result of Koenig and Xi \cite[Theorem 4.4]{KoXi}, it also implies finite global dimension of $R_\al$. Thus we obtain a new proof of a recent result of Kato \cite{Kato} and McNamara \cite{McN} in type $A$ over an arbitrary field. As another application, we automatically get a theory of standard and reduced standard modules, cf. \cite{Kato}.  

The (finite dimensional) cyclotomic quotients of $R_\al$ have been shown to be graded cellular by Hu and Mathas \cite{HM}. Their proof uses the isomorphism theorem from \cite{BKyoung}, the ungraded cellular structure constructed in \cite{DJM}, and the seminormal forms of cyclotomic Hecke algebras. The affine cellular structure that we construct here is combinatorially less intricate and does not appeal to seminormal forms. 

Our affine cellular basis is built from scratch, using only the defining relations, some weight theory from \cite{KR}, and a dimension formula \cite[Theorem 4.20]{BKgrdec}. It is not clear whether it can be deduced from the basis in \cite{HM} by a limiting procedure. At any rate, our philosophy is that one should first construct affine cellular structures and then `project' them to the quotients. This seems to be the only approach available for Lie types other than $A$. 

 We now give a definition of (graded) affine cellular algebra from \cite[Definition 2.1]{KoXi}. For this introduction, we fix a noetherian domain $k$ (later on it will be sufficient to work with $k=\Z$). 
 By definition, an {\em affine algebra} is a quotient of a polynomial algebra $k[x_1,\dots,x_n]$ for some $n$. 

Throughout the paper,  unless otherwise stated, we assume that all algebras are ($\Z$)-graded, all ideals, subspaces, etc. are homogeneous, and all homomorphisms are homogeneous degree zero homomorphisms with respect to the given gradings.

 Let $A$ be a (graded) unital $k$-algebra with a $k$-anti-involution $\tau$. A (two-sided) ideal $J$ in $A$ is called an \emph{affine cell ideal} if the following conditions are satisfied: 
\begin{enumerate}
\item $\tau(J) = J$;
\item there exists an affine $k$-algebra $B$ with a $k$-involution $\si$ and a free $k$-module $V$ of finite rank such that $\Delta:=V \otimes_k B$ has an $A$-$B$-bimodule structure, with the right $B$-module structure induced by the regular right $B$-module structure on $B$;
\item let $\Delta' := B \otimes_k V$ be the $B$-$A$-bimodule with left $B$-module structure induced by the regular left $B$-module structure on $B$ and right $A$-module structure defined by 
\begin{equation}\label{EAction}
(b\otimes v)a = \tw(\tau(a)(v \otimes b)),
\end{equation} 
where 
$\tw:V\otimes_k B\to B\otimes_k V,\ v\otimes b\to b\otimes v$; 
then there is an $A$-$A$-bimodule isomorphism $\alpha: J \to \Delta \otimes_B\Delta'$, such that the following diagram commutes:

$$\xymatrix{ J \ar^-{\alpha}[rr]  \ar^{\tau}[d]&& \Delta \otimes_B\Delta' \ar^{v \otimes b \otimes b' \otimes w \mapsto w \otimes \si(b') \otimes \si(b) \otimes v }[d] \\ J \ar^-{\alpha}[rr]&&\Delta \otimes_B\Delta'.}$$
\end{enumerate}
The algebra $A$ is called \emph{graded affine cellular} if there is a $k$-module decomposition $A= J_1' \oplus J_2' \oplus \cdots \oplus J_n'$ 
with $\tau(J_l')=J_l'$ for $1 \leq l \leq n$, such that, setting $J_m:= \bigoplus_{l=1}^m J_l'$, we obtain an ideal filtration
$$0=J_0 \subset J_1 \subset J_2 \subset \cdots \subset J_n=A$$
so that each $J_m/J_{m-1}$ is an affine cell ideal of $A/J_{m-1}$. 

To describe our main results we introduce some notation referring the reader to the main body of the paper for details. Let $Q_+$ be the non-negative root lattice corresponding to the root system of type $A_\infty$, $\al\in Q_+$ of height $d$, and $R_\al$ be the corresponding KLR algebra with standard generators $e(\bi), \psi_1,\dots,\psi_{d-1},y_1,\dots,y_d$. We denote by
$\Pi(\al)$ be the set of root partitions of $\al$, see Section~\ref{SSRP}. To any $\pi\in\Pi(\al)$ we associate the Young subgroup $\Si_\pi\leq\Si_d$ and denote by $\Si^\pi$ the set of the shortest left coset representatives for $\Si_\pi$ in $\Si_d$. We define the polynomial subalgebras $\La_\pi\subseteq R_\al$ -- these are isomorphic to tensor products of algebras of symmetric polynomials, see (\ref{ELaPi}). We also define the monomials $y_\pi\in R_\al$ and idempotents $e_\pi\in R_\al$, see Section~\ref{SSBasic}. Then we set
\begin{align*}
I_\pi' &:= k\text{-}\spa\{\psi_w y_\pi \Sym_\pi e_\pi  \psi_v^\tau \ |\ w, v \in \Si^\pi\},
\end{align*}
$I_\pi := \sum_{\si \geq \pi}{I_\si'}$, and $I_{>\pi} = \sum_{\si > \pi}{I_\si'}$. 
Our main results are now as follows:

\vspace{2mm}
\noindent
{\bf Main Theorem.}
{\em
The algebra $R_\al$ is graded affine cellular with cell chain given by the ideals $\{I_\pi\mid \pi \in \Pi(\al)\}$. Moreover, setting $\bar R_\al := R_\al / I_{>\pi}$ for a fixed $\pi\in \Pi(\al)$, we have:
  \begin{enumerate}
  \item[{\rm (i)}] the map $\Sym_\pi \to \bar e_\pi \bar R_\al \bar e_\pi,\ b\mapsto \bar b\bar e_\pi$ is an isomorphism of graded algebras;
  \item[{\rm (ii)}] $\bar R_\al \bar e_\pi$ is a free right $\bar e_\pi \bar R_\al \bar e_\pi$-module with basis $\{\bar \psi_w \bar y_\pi \bar e_\pi\ |\ w \in \Si^\pi\}$;
  \item[{\rm (iii)}] $\bar e_\pi \bar R_\al $ is a free left $\bar e_\pi \bar R_\al \bar e_\pi$-module with basis $\{\bar e_\pi \bar \psi_v^\tau\ |\ v \in \Si^\pi\}$;
  \item[{\rm (iv)}] multiplication provides an isomorphism
      \[
        \bar R_\al \bar e_\pi \otimes_{\bar e_\pi \bar R_\al \bar e_\pi} \bar e_\pi \bar R_\al \iso \bar R_\al \bar e_\pi \bar R_\al;
      \]
\item[{\rm (v)}]  $\bar R_\al \bar e_\pi \bar R_\al = I_\pi / I_{>\pi}$.
  \end{enumerate}
}

\vspace{2mm}
Main Theorem(v) shows that each affine cell ideal $I_\pi/I_{>\pi}$ in $A/I_{>\pi}$ is generated by an idempotent. This, together with the fact that each algebra $\La_\pi$ is a polynomial algebra, is enough to invoke \cite[Theorem 4.4]{KoXi} to get

\vspace{2 mm}
\noindent
{\bf Corollary.}
{\em 
If the ground ring $k$ has finite global dimension, then the algebra $R_\al$ has finite global dimension. 
}

\vspace{2 mm}
This seems to be a slight generalization of \cite{Kato} and \cite{McN} (in type $A$ only) in two ways: Kato works over fields of characteristic zero, and McNamara seems to work over arbitrary fields; moreover, \cite{Kato} and \cite{McN} deal with categories of graded modules only, while our corollary holds for the algebra $R_\al$ even as an ungraded algebra. 

In the following conjectures we use the term {\em graded affine quasi-hereditary}\, to denote the graded affine cellular algebras with the affine cell ideals satisfying the additional nice properties described in Main Theorem.

\vspace{2mm}
\noindent
{\bf Conjecture.}
{\em
{\rm (i)} All Khovanov-Lauda-Rouquier algebras are graded affine cellular. 

{\rm (ii)} All cyclotomic  Khovanov-Lauda-Rouquier algebras are graded cellular.

{\rm (iii)} Let us fix a Lie type $\Gamma$. Then the Khovanov-Lauda-Rouquier algebras $R_\al(\Ga)$ are graded affine quasi-hereditary for all $\al\in Q_+$ if and only if $\Ga$ is of finite type. 
}

\vspace{2 mm}
In \cite{KLade}, we prove this conjecture for finite simply laced Lie types $\Ga$. 

The organization of the paper is as follows. Section~\ref{SPrel} is preliminary. Section~\ref{SDim} establishes a graded dimension formula for $R_\al$, which is later used to show that the elements of our affine cellular basis are actually linearly independent. Section~\ref{SNH} deals with the special case of the affine nilHecke algebra. This case will be fed into the proof of the general case. Finally, in Section~\ref{SMain}, we prove the main results.

\section{Preliminaries}\label{SPrel}

\subsection{Lie theoretic notation}

Let $\Gamma$ be the Dynkin quiver of type $A_\infty$ with the set of vertices $I=\Z$ and the corresponding 
Cartan matrix 
\begin{equation}\label{Cartan}
a_{i,j} := \left\{
\begin{array}{rl}
2&\text{if $i=j$},\\
0&\text{if $|i-j|>1$},\\
-1&\text{if $i=j \pm 1$}
\end{array}\right.
\end{equation}
for $i,j \in I$. We have a set of simple roots $\{\al_i \mid i\in I\}$ and the positive part of the root lattice $Q_+ := \bigoplus_{i \in I} \Z_{\geq 0} \al_i$. The set of positive roots is given by
$$\{\al(m,n):=\al_m+\al_{m+1}+\dots+\al_n\mid  m,n \in I, \ m\leq n\}.$$
For $\al=\sum_{i \in I} c_i \al_i \in Q_+$, we denote by $|\al|:=\sum_{i \in I} c_i$ the height of $\alpha$. We furthermore have a set of fundamental weights $\{ \omega_i \mid i \in I\}$ 
and the set of dominant weights $P_+:=\bigoplus_{i \in I} \Z_{\geq 0} \omega_i$.



The symmetric group $\Si_d$ with basic transpositions $s_1,\dots,s_{d-1}$ acts on the set $I^d$ by place permutation. 
The orbits  are the sets
$$
\words_\al := \{\bi = (i_1,\dots,i_d)\in I^d\:|\:\al_{i_1}+\cdots+\al_{i_d}=\alpha\}
$$
for each $\al \in Q_+$ with $|\al|=d$. 
We let $\geq$ denote the lexicographic order on $\words_\al$ determined by the natural order on $I=\Z$.

To a positive root $\be = \al(m,n)$, we associate the word $$\bi_\be:= (m,m+1,\dots,n) \in\words_\be.$$ We denote the set of positive roots by $\Phi_+$. 
Define a total order on $\Phi_+$ by 
\begin{equation}\label{EOrderPhi}
\be\leq \ga\ \text{if and only if}\ \bi_\be\leq\bi_\ga\qquad(\be,\ga\in\Phi_+).
\end{equation}

\subsection{KLR Algebras}
For $\alpha\in Q_+$ of height $d$ and the commutative unital ground ring $k$, let $R_\alpha=R_\al(k)$ denote the associative, unital $k$-algebra on generators
$\{e(\bi)\:|\:\bi \in \words_\alpha\}\cup\{y_1,\dots,y_d\}\cup\{\psi_1,\dots,\psi_{d-1}\}$
subject to the following relations 
\begin{align*}
&e(\bi) e(\bj) = \de_{\bi,\bj} e(\bi);  \qquad\sum_{\bi \in \words_\al} e(\bi) = 1;\\
&y_r e(\bi) = e(\bi) y_r; \qquad
\psi_r e(\bi) = e(s_r\cdot\bi) \psi_r;\quad y_r y_s = y_s y_r;\\
&\psi_r y_s  = y_s \psi_r \qquad\text{if $s \neq r,r+1$};\\
&\psi_r \psi_s = \psi_s \psi_r\qquad\text{if $|r-s|>1$};\\
&\psi_r y_{r+1} e(\bi) = (y_r\psi_r+\de_{i_r,i_{r+1}})e(\bi);\quad 
y_{r+1} \psi_re(\bi) =(\psi_r y_r+\de_{i_r,i_{r+1}}) e(\bi);
\\
&\psi_r^2e(\bi) = 
\left\{\begin{array}{ll}
0&\text{if $i_r = i_{r+1}$},\\
e(\bi)&\text{if $|i_r -i_{r+1}|>1$},\\
(y_{r+1}-y_r)e(\bi)&\text{if $i_r =i_{r+1}+1$},\\
(y_r - y_{r+1})e(\bi)&\text{if $i_r =i_{r+1}-1$;}
\end{array}\right.\\
&\psi_{r}\psi_{r+1} \psi_{r} e(\bi)
=
\left\{\begin{array}{ll}
(\psi_{r+1} \psi_{r} \psi_{r+1} +1)e(\bi)&\text{if $i_{r+2}=i_r =i_{r+1}+1$},\\
(\psi_{r+1} \psi_{r} \psi_{r+1} -1)e(\bi)&\text{if $i_{r+2}=i_r = i_{r+1}-1$},\\
\psi_{r+1} \psi_{r} \psi_{r+1} e(\bi)&\text{otherwise}.
\end{array}\right.
\label{R7}
\end{align*}

There is a unique $\Z$-grading on $R_\alpha$ such that all $e(\bi)$ are of degree $0$,
all $y_r$ are of degree $2$, and $\deg(\psi_r e(\bi))=-a_{i_r,i_{r+1}}$ (see \ref{Cartan}).

Fixing a reduced decomposition $w=s_{r_1}\dots s_{r_m}$ for each $w\in \Si_d$, we define the elements 
$
\psi_w:=\psi_{r_1}\dots\psi_{r_m}\in R_\al
$
for all $w\in \Si_d$. 

\begin{Theorem}\label{TBasisGen}
{\cite[Theorem 2.5]{KL1}}, \cite[Theorem 3.7]{R} 
A $k$-basis of $R_\al$ is given by
$$ \{\psi_w y_1^{m_1}\dots y_d^{m_d}e(\bi)\mid w\in \Si_d,\ m_1,\dots,m_d\in\Z_{\geq 0}, \ \bi\in \words_\al\}.
$$ 
\end{Theorem}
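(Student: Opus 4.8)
The statement to prove is the PBW-type basis theorem for $R_\al$, i.e.\ Theorem~\ref{TBasisGen}. Although the paper quotes it from Khovanov--Lauda and Rouquier, a self-contained proof proceeds in two halves: spanning and linear independence.

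\medskip

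\textbf{Spanning.} First I would show that the set $\{\psi_w y_1^{m_1}\cdots y_d^{m_d}e(\bi)\}$ spans $R_\al$. Every element is a $k$-linear combination of words in the generators $e(\bi), y_r, \psi_r$. Using the commutation relations $y_r e(\bi)=e(\bi)y_r$, $\psi_r e(\bi)=e(s_r\cdot\bi)\psi_r$, one can move all idempotents to the right and reduce to monomials of the form (monomial in $\psi$'s)(monomial in $y$'s)$e(\bi)$; the relations $\psi_r y_s=y_s\psi_r$ for $s\neq r,r+1$ together with $\psi_r y_{r+1}e(\bi)=(y_r\psi_r+\delta_{i_r,i_{r+1}})e(\bi)$ and $y_{r+1}\psi_re(\bi)=(\psi_r y_r+\delta_{i_r,i_{r+1}})e(\bi)$ let one push all $y$'s past all $\psi$'s to the right, at the cost of lower-order error terms (fewer $\psi$ factors). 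So it remains to span the $\psi$-part: any word $\psi_{r_1}\cdots\psi_{r_m}e(\bi)$ must be rewritten in terms of the chosen reduced-word representatives $\psi_w$. When $s_{r_1}\cdots s_{r_m}$ is not reduced, apply the quadratic relation $\psi_r^2 e(\bi)\in k[y]e(\bi)$ or the braid relation $\psi_r\psi_{r+1}\psi_r e(\bi)=\psi_{r+1}\psi_r\psi_{r+1}e(\bi)+(\text{lower})$ and commuting relations (this is the usual Matsumoto/Tits argument, tracking that any two reduced words for the same $w$ are connected by braid moves, here producing only lower-order corrections) to reach a normal form. A straightforward induction — on the length of the $\psi$-word, with the degree or the number of $y$-factors as a secondary parameter — closes the spanning argument.

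\medskip

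\textbf{Linear independence.} This is the harder half and the real obstacle. The standard route is to exhibit a faithful representation of $R_\al$ on which these elements act by visibly independent operators. Concretely, let $R_\al$ act on $P_\al:=\bigoplus_{\bi\in\words_\al}k[y_1,\dots,y_d]e(\bi)$, a free module over the polynomial ring, by the ``polynomial representation'': $e(\bi)$ acts as the projection onto the $\bi$-summand, $y_r$ acts by multiplication, and $\psi_r$ acts by a Demazure-type divided-difference operator on the summand $k[y]e(\bi)$, with formula depending on whether $i_r=i_{r+1}$, $|i_r-i_{r+1}|>1$, or $i_r=i_{r+1}\pm1$ (chosen precisely so that all the defining relations, including the two nontrivial braid relations, hold — verifying this is a finite but delicate computation). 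One then checks that the operators $\{\psi_w y_1^{m_1}\cdots y_d^{m_d}e(\bi)\}$ applied to the element $1_\bi\in k[y]e(\bi)$ produce polynomials whose leading terms (with respect to a suitable monomial order keyed to the length of $w$) are all distinct, forcing linear independence. Since the spanning set is therefore also independent, it is a basis.

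\medskip

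\textbf{Main obstacle and alternative.} The genuinely nontrivial point is constructing the polynomial representation and verifying the braid relation $\psi_r\psi_{r+1}\psi_r e(\bi)=\psi_{r+1}\psi_r\psi_{r+1}e(\bi)\pm e(\bi)$ on it — the $\pm1$ correction terms in the cases $i_{r+2}=i_r=i_{r+1}\pm1$ are exactly where naive divided-difference operators fail and one needs the correct normalization. In the present paper, however, all of this may simply be cited: Theorem~\ref{TBasisGen} is attributed to \cite[Theorem 2.5]{KL1} and \cite[Theorem 3.7]{R}, so for the purposes of this paper the ``proof'' is the citation, and the basis is used as a black box (most importantly, to match dimensions against the graded dimension formula of Section~\ref{SDim}). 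If a proof were required here, I would follow the spanning-plus-polynomial-representation strategy above rather than attempt a direct combinatorial independence argument, since the latter runs into the same braid-relation bookkeeping without the benefit of an ambient module to test against.
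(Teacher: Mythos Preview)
Your reading is correct: the paper does not prove Theorem~\ref{TBasisGen} at all but simply cites \cite[Theorem~2.5]{KL1} and \cite[Theorem~3.7]{R}, and you explicitly note this. The sketch you give of how such a proof would go (spanning via a straightening argument using the defining relations, linear independence via the faithful polynomial representation built from Demazure-type operators) is exactly the approach taken in those references, so there is nothing to compare against and no gap to flag.
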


The commutative subalgebra of $R_\al$ generated by $y_1,\dots,y_d$ is thus isomorphic to the polynomial algebra $k[y_1,\dots,y_d]$ and will be denoted by $\Pol_d$. In view of Theorem~\ref{TBasisGen}, we have $R_\al(k)\simeq R_\al(\Z)\otimes_\Z k$, so in what follows we can work with $k=\Z$. When we need to deal with representation theory of $R_\al$ we often  assume that $k$ is a field.

We will also use the diagrammatic notation introduced in~\cite{KL1} to represent elements of $R_\al$. Given $\bi=(i_1, \dots, i_d) \in \words_\al$, we write
\[
e(\bi)=
\begin{braid}\tikzset{baseline=7mm}
  \draw (0,4)node[above]{$i_1$}--(0,0);
  \draw (1,4)node[above]{$i_2$}--(1,0);
  \draw[dots] (1.2,4)--(3.8,4);
  \draw[dots] (1.2,0)--(3.8,0);
  \draw (4,4)node[above]{$i_d$}--(4,0);
\end{braid}\hspace{-1.5mm},
\ \psi_r e(\bi)=
\begin{braid}\tikzset{baseline=7mm}
  \draw (0,4)node[above]{$i_1$}--(0,0);
  \draw[dots] (0.2,4)--(1.8,4);
  \draw[dots] (0.2,0)--(1.8,0);
  \draw (2,4)node[above]{$i_{r-1}$}--(2,0);
  \draw (3,4)node[above]{$i_r$}--(4,0);
  \draw (4,4)node[above]{$i_{r+1}$}--(3,0);
  \draw (5,4)node[above]{}--(5,0);
  \draw[dots] (5.2,4)--(6.8,4);
  \draw[dots] (5.2,0)--(6.8,0);
  \draw (7,4)node[above]{$i_d$}--(7,0);
\end{braid}\hspace{-1.5mm},
\ 
y_s e(\bi) =
\begin{braid}\tikzset{baseline=7mm}
  \draw (0,4)node[above]{$i_1$}--(0,0);
  \draw[dots] (0.2,4)--(1.8,4);
  \draw[dots] (0.2,0)--(1.8,0);
  \draw (2,4)node[above]{$i_{s-1}$}--(2,0);
  \draw (3,4)node[above]{$i_s$}--(3,0);
  \greendot(3,2);
  \draw (4,4)node[above]{$i_{s+1}$}--(4,0);
  \draw[dots] (4.2,4)--(5.8,4);
  \draw[dots] (4.2,0)--(5.8,0);
  \draw (6,4)node[above]{$i_{d}$}--(6,0);
\end{braid}
\]
where \(1\le r<d\) and $1\le s\le d$. 

We say that $w\in \Si_d$ possesses a \emph{left-right symmetric reduced decomposition}, if we can write $w= s_{r_1}\dots s_{r_m}=s_{r_m}\dots s_{r_1}$ and the reduced expression $s_{r_m}\dots s_{r_1}$ can be obtained from $s_{r_1}\dots s_{r_m}$ by using only commuting Coxeter relations.

\begin{Lemma} \label{LLR}
  The element $w_0 \in \Si_d$ possesses a left-right symmetric reduced decomposition.
\end{Lemma}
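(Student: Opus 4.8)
The plan is to produce an explicit reduced word for the longest element $w_0\in\Si_d$ which is manifestly a palindrome up to commuting moves. The most natural candidate is the ``staircase'' reduced decomposition
\[
w_0 = (s_1)(s_2 s_1)(s_3 s_2 s_1)\cdots(s_{d-1}s_{d-2}\cdots s_1),
\]
of length $\binom{d}{2}=\ell(w_0)$, which is indeed reduced. Reading this word right-to-left gives
\[
(s_1 s_2\cdots s_{d-1})(s_1 s_2\cdots s_{d-2})\cdots(s_1 s_2)(s_1),
\]
and one checks this equals $w_0$ as well (it is the reduced decomposition obtained from the first by the symmetry $s_i\mapsto s_{d-i}$ composed with reversal, both of which fix $w_0$). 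So the first step is to verify that these two words are genuinely reduced expressions for $w_0$ — this is classical, e.g. by counting inversions or by the well-known fact that $w_0$ sends $i\mapsto d+1-i$.

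The heart of the argument is the second step: showing that the reversed word can be transformed into the original using \emph{only} commuting Coxeter relations $s_i s_j = s_j s_i$ for $|i-j|>1$, with no braid relations $s_i s_{i+1} s_i = s_{i+1} s_i s_{i+1}$ allowed. Here I would argue by induction on $d$. Write the staircase word as $u_{d-1} = u_{d-2}\cdot(s_{d-1}s_{d-2}\cdots s_1)$ where $u_{d-2}$ is the staircase word for the longest element of $\Si_{d-1}$ (on letters $s_1,\dots,s_{d-2}$). By the inductive hypothesis $u_{d-2}$ can be rearranged by commuting moves into its own reversal $u_{d-2}^{\mathrm{rev}}$. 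The remaining task is combinatorial: to move the trailing block $s_{d-1}s_{d-2}\cdots s_1$ past $u_{d-2}^{\mathrm{rev}}$ and reorganize everything into the reversed staircase word, using only commutations. One clean way to package this: observe that in the staircase word, the letter $s_{d-1}$ appears exactly once; everything to its right is $s_{d-2}\cdots s_1$, and everything to its left ($u_{d-2}$) involves only $s_1,\dots,s_{d-2}$, so $s_{d-1}$ commutes with all of $u_{d-2}$ except its final letter $s_{d-2}$. Tracking the positions of $s_{d-1}$ (once), $s_{d-2}$ (twice), etc., one sees the whole rearrangement decomposes into independent ``bubble'' moves on each letter, each of which only uses $|i-j|>1$ commutations. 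I would make this precise with a small picture / explicit indexing rather than a slick formula.

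The main obstacle I anticipate is purely bookkeeping: writing down the commuting-moves rearrangement cleanly enough to be convincing without an unwieldy case analysis. A helpful reformulation is to use the ``wiring diagram'' / heap model of reduced words — commuting moves are exactly the local moves on heaps that don't change the heap poset, while braid moves do change it. So the statement becomes: the staircase word and its reversal have isomorphic heaps. Since reversing a word reverses its heap poset, and the staircase heap for $w_0$ is self-dual (its Hasse diagram is the ``triangular grid'' poset, which is isomorphic to its own order-dual via the reflection $s_i\mapsto s_{d-i}$), the two heaps coincide, hence the words differ only by commuting moves. I would likely present the heap argument as the conceptual reason and fall back on the inductive construction for an explicit sequence of moves if a constructive statement is wanted downstream.
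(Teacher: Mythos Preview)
Your proposal is correct, and the heap/wiring-diagram argument you sketch at the end is essentially the paper's proof: the paper simply exhibits a specific braid diagram for $w_0$ that is visibly symmetric under reflection across the horizontal axis, then observes that reversing a reduced word corresponds exactly to this reflection, so the word and its reversal lie in the same commutation class. Your staircase word works for the same reason---its heap is the triangular stack you describe, which is self-dual as a \emph{labeled} poset (in fact label-preservingly self-dual; the relabeling $s_i\mapsto s_{d-i}$ you mention is not needed and slightly muddies the point). The inductive bookkeeping you outline in the middle is sound but unnecessary once you have the diagrammatic observation, and the paper dispenses with it entirely: one picture and one sentence.
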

\begin{proof}
Since reversing the order of the basic transpositions in a reduced decomposition amounts to reflecting the representing braid diagram across the horizontal, it is easy to see that 
\[
\begin{braid}\tikzset{baseline=7mm}
  \draw (0,5)--(5,0);
  \draw (1,5)--(0,4)--(4,0);
  \draw (2,5)--(0,3)--(3,0);
  \draw (3,5)--(0,2)--(2,0);
  \draw (4,5)--(0,1)--(1,0);
  \draw (5,5)--(0,0);
\end{braid}
\]
gives rise to a left-right symmetric reduced decomposition as desired.
\end{proof}

\subsection{Root partitions}\label{SSRP}
Let $\al\in Q_+$. A {\em root partition of $\al$} is a way to write $\al$ as an ordered sum of positive roots 
$
\al=p_1\be_1+\dots+p_N\be_N
$
so that $\be_1>\dots>\be_N$ and $p_1,\dots,p_N>0$. We denote such a root partition $\pi$ as follows:
\begin{equation}\label{ERP}
\pi=\be_1^{p_1}\dots \be_N^{p_N}.
\end{equation}
The set of all root partitions of $\al$ is denoted $\Pi(\al)$. 

To a root partition  $\pi$ as in \eqref{ERP} we associate the word $$\bi_\pi:= \bi_{\be_1}\dots\bi_{\be_1}\dots\bi_{\be_N}\dots\bi_{\be_N} \in \words_\al$$
as the concatenation of the $\bi_{\be_k}$ where each $\bi_{\be_k}$ occurs $p_k$ times.
Define the total order on $\Pi(\al)$ via  $\pi\geq\si$ if and only if $\bi_\pi\geq\bi_\si$ for $\pi,\si \in \Pi(\al)$.

To a root partition $\pi$ as in (\ref{ERP}) we also associate a parabolic subgroup 
$\Si_\pi\leq \Si_d$:
$$
\Si_\pi\cong \Si_{|\be_1|}^{\times p_1}\times\dots\times \Si_{|\be_N|}^{\times p_N}
$$ 
and the set $\Si^\pi$ of the minimal length left coset representatives of $\Si_\pi$ in $\Si_d$. The orbits of $\Si_\pi$ on $\{1,\dots,d\}$ will be referred to as {\em $\pi$-blocks}. The first $p_1$ of the $\pi$-blocks are of size $|\be_1|$, and will be referred to as the {\em $\pi$-blocks of weight $\be_1$}, the next $p_2$ of the $\pi$-blocks are of size $|\be_2|$, and will be referred to as the {\em $\pi$-blocks of weight $\be_2$}, etc.

\subsection{Representation theory}
Set $\A:=\Z[q,q^{-1}]$. 
For a graded vector space $V=\oplus_{n\in \Z}V_n$ we 
set $\dim_q V:= \sum_{n \in \Z} (\dim V_n) q^n$. If $f=\sum_{m\in \Z}a_mq^m\in\A$,  we denote $\deg_n(f):=a_nq^n$. 
We denote by $V\langle m \rangle$ the graded vector space with degrees shifted up by $m$ so that $V\langle m \rangle_n = V_{n-m}$.

We will use an operation of induction on the KLR-algebras defined in \cite{KL1}. Given an $R_\al$-module $M$ and an $R_\be$-module $N$, we thus have an induced module $\Ind_{\al,\be}M\boxtimes N$ over $R_{\al+\be}$, which will also be denoted $M\circ N$.  


For a positive root $\be$, there is a unique one-dimensional  $R_\be$-module $L(\be)$ with $e(\bi_\be)L(\be) \neq 0$ and all other generators acting as zero. For a root partition $\pi \in \Pi(\al)$ as in \eqref{ERP} we set
$
\shift(\pi):=\sum_{k=1}^N p_k(p_k-1)/2
$
and, following \cite[7.1]{KR}, define the \emph{(reduced) standard module}
$$
\bar\De(\pi):= L(\be_1)^{\circ p_1}\circ \cdots \circ L(\be_N)^{\circ p_N}\langle \shift(\pi) \rangle.
$$
Let $k$ be a field. By \cite[Theorem 7.2]{KR}, $\bar\De(\pi)$ has a unique irreducible quotient, denoted by $L(\pi)$, and $\{L(\pi)\mid\pi\in \Pi(\al)\}$ is a  complete system of (graded) irreducible $R_\al$-modules up to isomorphism. Furthermore, $\bi_\pi$ is lexicographically the largest among the words $\bi\in\words_\al$ such that $e(\bi)L(\pi)\neq 0$.

\subsection{Poincar\'e polynomials}
We will make use of the following well-known computation of the Poincar\'e polynomial, see e.g. \cite[Theorem 3.15]{Hum}:

\begin{Lemma}\label{lem:nHdim}
We have 
  \[
    \sum_{w \in \Si_a} t^{\ell(w)} = \prod_{r=1}^a \frac{t^r-1}{t-1} = t^{a(a-1)/2} \prod_{r=1}^a \frac{1-t^{-r}}{1-t^{-1}}.
  \]
\end{Lemma}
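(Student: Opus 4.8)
The statement is the classical formula for the Poincaré polynomial of the symmetric group $\Si_a$. I would prove it by induction on $a$, using the standard coset decomposition $\Si_a = \bigsqcup_{j=0}^{a-1} \Si_{a-1}\, t_j$, where $t_j$ is the shortest representative of its coset, namely $t_j = s_{a-1}s_{a-2}\cdots s_{a-j}$ (with $t_0 = e$), so that $\ell(t_j) = j$. The key combinatorial fact is that for $w \in \Si_{a-1}$ one has $\ell(w\, t_j) = \ell(w) + \ell(t_j) = \ell(w) + j$; this is the length-additivity of the minimal coset representative decomposition, which follows from the subword characterization of the Bruhat–length function (or can itself be checked by a short induction on $j$, using $\ell(ws_r) = \ell(w)\pm 1$ together with the fact that $t_j$ sends $a$ to the smallest position). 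Granting this, we get
\[
\sum_{w\in\Si_a} t^{\ell(w)} = \sum_{j=0}^{a-1} t^j \sum_{w'\in\Si_{a-1}} t^{\ell(w')} = (1 + t + \cdots + t^{a-1})\sum_{w'\in\Si_{a-1}} t^{\ell(w')},
\]
and the base case $a=1$ (where $\Si_1$ is trivial and the sum is $1$) starts the induction. Iterating the recursion yields $\sum_{w\in\Si_a} t^{\ell(w)} = \prod_{r=1}^a (1 + t + \cdots + t^{r-1}) = \prod_{r=1}^a \frac{t^r-1}{t-1}$.

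The two displayed product expressions are then identified by elementary algebra: each factor satisfies $\frac{t^r-1}{t-1} = t^{r-1}\cdot\frac{1-t^{-r}}{1-t^{-1}}$, and multiplying over $r = 1,\dots,a$ contributes a total power $t^{\sum_{r=1}^a (r-1)} = t^{a(a-1)/2}$ out front, giving the right-hand side $t^{a(a-1)/2}\prod_{r=1}^a \frac{1-t^{-r}}{1-t^{-1}}$.

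The only real content is the length-additivity of the coset decomposition, so that is where I would focus; everything else is bookkeeping. In practice I would simply cite it, exactly as the excerpt does — this is \cite[Theorem 3.15]{Hum} or equivalently the Poincaré polynomial computation via the exponents $1, 2, \dots, a-1$ of the Coxeter system of type $A_{a-1}$ — rather than reprove a standard result. So the "obstacle" is not mathematical but expository: one should decide whether to give the one-line induction above or to black-box it; given that the paper is citing Humphreys, black-boxing is the appropriate choice and no proof need be written at all beyond the reference.
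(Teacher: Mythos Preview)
Your proposal is correct, and you have already correctly identified the paper's approach: the paper gives no proof whatsoever and simply cites \cite[Theorem 3.15]{Hum}. Your inductive argument via the coset decomposition $\Si_a = \bigsqcup_j \Si_{a-1} t_j$ is the standard one and is fine, as is the algebraic manipulation deriving the second product form; but as you note, none of this appears in the paper, which treats the formula as a black box.
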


\section{A dimension formula}\label{SDim}

In this section we establish a graded dimension formula for $R_\al$. This formula can be thought of as a combinatorial shadow of the affine cellular structure on $R_\al$ to be constructed later. We point out that there is a similar dimension formula for any finite type KLR algebra 
\cite{KLade}. The proof we give here works for type $A$ only, but it might be of independent interest since it exploits  a `limiting procedure' and the dimension formula from \cite[Theorem 4.20]{BKgrdec}. 
By Theorem~\ref{TBasisGen}, the graded dimension of $R_\al(k)$ does not depend on $k$, so in this section we fix $k$ a field.


We start by the following observation:

\begin{Lemma} \label{LThu}
Let $\be=\al_i+\dots+\al_j$ and $\ga=\al_i+\dots+\al_k$ for $j>k$. Then $L(\be)\circ L(\ga)\simeq L(\ga)\circ L(\be)\langle 1\rangle$ is irreducible. 
\end{Lemma}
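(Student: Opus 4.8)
The statement concerns $\be=\al(i,j)$ and $\ga=\al(i,k)$ with $j>k$, i.e. two roots with the same left endpoint $i$ but $\bi_\be=(i,i+1,\dots,j)$ strictly longer (to the right) than $\bi_\ga=(i,i+1,\dots,k)$; in particular $j\ge k+1$. The plan is to analyze the induced module $L(\be)\circ L(\ga)$ directly via Shuffle/Mackey-type combinatorics. First I would note that $\dim_q(L(\be)\circ L(\ga))$ is a sum over the $(|\be|,|\ga|)$-shuffles of $\bi_\be$ and $\bi_\ga$, weighted by degree shifts determined by the Cartan matrix, and the support words are exactly the shuffles of these two words. Because $\bi_\be$ and $\bi_\ga$ overlap in the common prefix $(i,i+1,\dots,k)$ and then $\bi_\be$ continues $k+1,\dots,j$, the only places a generator $\psi_r$ can act nontrivially are where two equal or adjacent letters meet, so I would carefully track which shuffles give nonzero contributions. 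The key point is that the concatenations $\bi_\be\bi_\ga$ and $\bi_\ga\bi_\be$ differ, and since $\bi_\be$ is lexicographically larger (it agrees with $\bi_\ga$ on the common prefix, then $\bi_\be$ has $k+1>$ nothing, i.e. $\bi_\ga$ has run out — so actually $\bi_\ga\bi_\be$ is the larger concatenation because after the common prefix $\bi_\ga$ is a prefix of $\bi_\be$ and $\bi_\ga$'s next letter is the start $i$ of the second factor vs. $k+1$ in $\bi_\be\bi_\ga$... ). I would pin down which of the two concatenations is lex-largest and use that to identify the head.

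Concretely, I would proceed as follows. Step 1: compute the graded dimension $\dim_q(L(\be)\circ L(\ga))$ using the standard formula for graded induction of one-dimensional modules along KLR algebras (sum over minimal-length coset representatives in $\Si^{(|\be|,|\ga|)}$, with $q$-power given by $-\sum a_{i_r,i_s}$ over the relevant crossings); observe it is symmetric in the sense $\dim_q(L(\be)\circ L(\ga)) = q^{?}\dim_{q^{-1}}$-type relation, and in fact equals $\dim_q(L(\ga)\circ L(\be))$ up to an overall shift by $q$, where the exponent $1$ comes from $-a_{i,j'}$-type contributions being $0$ except at one spot where adjacent letters ($k$ and $k+1$, or rather the meeting of the tail of $\bi_\be$ with letters of $\bi_\ga$) contribute. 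Step 2: show $L(\be)\circ L(\ga)$ is irreducible. For this I would use the theory of \cite{KR}: the word $\bi_\pi$ for the root partition $\pi=\ga^1\be^1$ (ordered with $\be>\ga$, so $\bi_\pi=\bi_\be\bi_\ga$) or $\pi'=\be^1\ga^1$... — in any case, by \cite[Theorem 7.2]{KR} every $L(\pi)$ for a length-two root partition of $\be+\ga$ is a subquotient, and I would count: there are at most two root partitions of $\al(i,j)+\al(i,k)$ into (these particular) two roots, and show that in fact $\be+\ga$ has a \emph{unique} root partition involving only $\be,\ga$, forcing $L(\be)\circ L(\ga)$ and $L(\ga)\circ L(\be)$ to have the same unique irreducible constituent $L(\pi)$; combined with the dimension computation showing they have the same length-counted composition, both must \emph{equal} $L(\pi)$, hence be irreducible. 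Step 3: the degree shift $\langle 1\rangle$ then follows by comparing the lex-largest word in the support: $e(\bi_\pi)$ on each side sits in a single degree, and matching these degrees (one is the other shifted by $2\cdot\frac{1}{2}=1$, coming from the single adjacent-letter crossing with $a=-1$) gives $L(\be)\circ L(\ga)\simeq L(\ga)\circ L(\be)\langle 1\rangle$.

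\textbf{Main obstacle.} The delicate part is Step 2: rigorously ruling out additional composition factors. The clean way is to show that the product word admits essentially only one shuffle contributing to the cosocle and that no proper submodule can carry the lex-highest weight space; alternatively, invoke a known irreducibility criterion for $L(\be)\circ L(\ga)$ when $\be,\ga$ are roots whose associated segments are "linked" in a totally orderable way (here $(i,\dots,j)$ and $(i,\dots,k)$ are nested, which is the easy comparable case). I would likely structure the argument around the fact that $\{\be,\ga\}$ with $\be>\ga$ gives $\bi_\be\bi_\ga$ as \emph{the} lex-maximal word in the support of $L(\be)\circ L(\ga)$ with one-dimensional weight space, and that $\ell$ acts the same on both orders; then a standard argument (as in \cite[Lemma 3.1, Lemma 3.2]{KR} or the general theory of real simple modules) yields that $L(\be)\circ L(\ga)$ is simple. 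The rest is bookkeeping with the grading.
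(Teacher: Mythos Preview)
Your overall strategy---analyze the shuffle support and invoke the machinery of \cite{KR}---is the right one, and it is what the paper does. But your Step~2 has a genuine gap, and the paper's argument is both shorter and avoids it.

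The gap: you propose to show that ``$\be+\ga$ has a unique root partition involving only $\be,\ga$'', and from this conclude that $L(\be)\circ L(\ga)$ has a unique composition factor. That does not follow. There are in general \emph{many} root partitions of $\be+\ga$ (for instance with $\be=\al_1+\al_2+\al_3$, $\ga=\al_1+\al_2$, one also has $(\al_2+\al_3)(\al_1+\al_2)(\al_1)$, etc.), and any of the corresponding $L(\sigma)$ could in principle occur as a composition factor. Knowing only that the lex-maximal word $\bi_\be\bi_\ga$ has a one-dimensional weight space tells you that $L(\pi)$ (for $\pi=\be\ga$) occurs exactly once as a composition factor, but does not by itself exclude further composition factors $L(\sigma)$ with $\bi_\sigma<\bi_\be\bi_\ga$.

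The paper's fix is a single sharper observation: among all shuffles of $\bi_\be=(i,\dots,j)$ and $\bi_\ga=(i,\dots,k)$, the word $\bi_\be\bi_\ga$ is the \emph{only} dominant word (i.e.\ the only one of the form $\bi_\sigma$ for some $\sigma\in\Pi(\be+\ga)$), and it occurs with multiplicity one. Since every composition factor $L(\sigma)$ contributes its dominant word $\bi_\sigma$ to the support, irreducibility follows immediately, and the isomorphism with $L(\ga)\circ L(\be)\langle 1\rangle$ then drops out (as you note in Step~3) from comparing the grading on that unique weight space. This replaces all of your Steps~1--2 with a direct combinatorial check on shuffles, avoiding any dimension computation or enumeration of root partitions of $\be+\ga$.
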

\begin{proof}
It is easy to see that $\bi_\be\bi_\ga$ is the only dominant weight in $L(\be)\circ L(\ga)$, and it appears with multiplicity one. The result easily follows, cf. \cite{KR}.
\end{proof}

\subsection{Cyclotomic KLR-algebras.}
For the rest of this section we fix $\al\in Q_+$ of height $d$. Let 
\begin{equation}\label{EOm}
\Omega= \sum_{i\in I} b_i\omega_i
\end{equation} 
be a dominant weight of level $l:=\sum_{i\in I} b_i$, and consider the corresponding cyclotomic quotient $R^\Omega_\al$. We will use the notation and results of \cite{BKgrdec,BKW}. In particular,  by $\Par^\Omega_\al$ is the set of all $l$-multipartitions of weight $\al$, cf. \cite[(3.15)]{BKgrdec}, for $\la\in\Par^\Omega_\al$, we denote by $\StTab(\la)$ the set of standard $\la$-tableaux, $\deg(\Stab)$ denotes the degree of the standard tableau $\Stab\in\StTab(\la)$, cf. \cite[Section 4.11]{BKgrdec}\cite[Section 3.2]{BKW}, and $S^\la$ denotes the Specht module corresponding to $\la$, cf. \cite[Section 4.2]{BKW}. The definition of $\deg(\Stab)$ depends on the choice of a multicharge $\kappa=(k_1,\dots,k_l)$ such that $\omega_{k_1}+\dots+\omega_{k_l}=\Omega$, cf. \cite[Section 3]{BKgrdec}. We always make the choice for which $k_1\geq\dots\geq k_l$. 
By \cite[Theorem 4.20]{BKgrdec}, we have
\begin{equation*}\label{EWed1}
\dim_q R^\Omega_\al= \sum_{\la \in \Par^\Omega_\al} \Big(\sum_{\Stab \in \StTab(\la)} q^{\deg \Stab}\Big)^2.
\end{equation*}
By \cite[Corollary 3.14]{BKW}, we can rewrite this as follows:
\begin{equation}\label{EWed2}
\dim_q R^\Omega_\al= \sum_{\la \in \Par^\Omega_\al} \Big(\sum_{\Stab \in \StTab(\la)} q^{\deg(\psi_{w_\Stab}e(\bi^\la))+\deg(\T^\la)}\Big)^2,
\end{equation}
where $\T^\la$ is the leading $\la$-tableau, $\bi^\la\in\words_\al$ is the corresponding residue sequence, and $w_\Stab$ is defined by $w_\Stab\T^\la=\Stab$, cf. \cite[Section 3.2]{BKW}.  

The symmetric group $\Si_l$ acts on $l$-multipartitions by permuting their components, so that $w\cdot\la=(\la^{(w^{-1}(1))},\dots,\la^{(w^{-1}(l))})$. The parabolic subgroup $\Si(\Omega)=\times_{i\in I}\Si_{b_i}\leq \Si_l$ then acts on $\Par^\Omega_\al$. 
Let $\la=(\la^{(1)},\dots,\la^{(l)})\in\Par^\Omega_\al$ be such that each $\la^{(m)}$ is a non-trivial one-row partition. Let $\be_m=\sum_{b\in\la^{(m)}}\al_{\res(b)}$ where the summation is over all boxes $b$ of $\la^{(m)}$ and $\res(b)\in I$ denotes the residue of $b$. There exists an element $w\in\Si_l$ such that $\be_{w^{-1}(1)}\geq\dots\geq \be_{w^{-1}(l)}$. Let $\ell_\la$ be the length of the shortest such element, and define
$$
\pi(\la):=\be_{w^{-1}(1)}\dots \be_{w^{-1}(l)}\in \Pi(\al).
$$

By inflation we consider all $R^\Omega_\al$-modules as $R_\al$-modules. We want to connect  standard modules to some special Specht modules.

\begin{Proposition}\label{PThu}
Let $\la=(\la^{(1)},\dots,\la^{(l)})\in\Par^\Omega_\al$ be such that each $\la^{(m)}$ is a non-trivial one-row partition. Denote $\la^{\op}:=w_0^\Omega\cdot\la$, where $w_0^\Omega$ is the longest element in $\Si(\Omega)$. 
Then 
$
S^{\la^{\op}} \simeq \bar\De(\pi(\la))\< \ell_\la\>
$
\end{Proposition}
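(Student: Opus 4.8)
The plan is to identify both sides with the same one-dimensional-component induction product and then track the grading shift. First I would recall from \cite{BKW} that for a one-row multipartition the Specht module $S^{\la}$ is, up to grading shift, the induction product of the one-dimensional modules attached to its rows: more precisely, if $\la=(\la^{(1)},\dots,\la^{(l)})$ with each $\la^{(m)}$ a one-row partition of content $\be_m$, then $S^{\la}$ restricted to $R_\al$ (via inflation) is isomorphic to $L(\be_1)\circ\cdots\circ L(\be_l)$ up to a shift; this is essentially the statement that the leading tableau generates and that the $\psi$-action realizes the shuffle. The key subtlety is that the components of $\la^{\op}=w_0^\Omega\cdot\la$ are listed so that the roots $\be_m$ appear in \emph{weakly decreasing} order within each $\Si_{b_i}$-orbit, which is exactly the order needed to form $\bar\De(\pi(\la))$: equal roots get grouped into the blocks $\be_k^{p_k}$, and $\shift(\pi(\la))=\sum_k p_k(p_k-1)/2$ is precisely the shift built into the definition of the reduced standard module. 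So modulo shifts, $S^{\la^{\op}}$ and $\bar\De(\pi(\la))$ are the same induction product.

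The heart of the argument is then the bookkeeping of the two grading shifts. On the $\bar\De$ side the shift is $\shift(\pi(\la))$, baked in by definition. On the Specht side the shift is $\deg(\T^{\la^{\op}})$ (or, equivalently, the difference of degrees of leading tableaux), and I would compute it using \cite[Theorem 4.20]{BKgrdec}/\cite[Corollary 3.14]{BKW} together with Lemma~\ref{LThu}. The point of Lemma~\ref{LThu} is that swapping two adjacent one-row components $L(\be)\circ L(\ga)$ with $\be>\ga$ (same initial vertex, $\be$ longer) costs a shift of $1$; more generally, reordering the $\be_m$ from the order given by $\la$ to the weakly decreasing order $\be_{w^{-1}(1)}\ge\dots\ge\be_{w^{-1}(l)}$ costs exactly $\ell_\la$ such elementary swaps, each contributing $1$ — but one must check that only swaps of roots with a common endpoint (the ones with nonzero shift) are counted by $\ell_\la$, i.e. that $\ell_\la$ equals the number of genuine inversions among the $\be_m$. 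This is where I expect the main obstacle: matching the combinatorial definition of $\ell_\la$ (shortest $w\in\Si_l$ sorting the $\be_m$) with the representation-theoretic shift accumulated along a reduced word, and verifying that the "otherwise" case of Lemma~\ref{LThu}-type commutations contributes no shift, so that the total is exactly $\ell_\la$ and not something like $\ell_\la$ plus a correction.

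Concretely, the steps in order would be: (1) express $\dim_q S^{\la^{\op}}$ via \eqref{EWed2}, identifying the relevant summand and its leading-tableau degree; (2) show $S^{\la^{\op}}$ is irreducible-up-to-composition-structure isomorphic as a module (ignoring grading) to the induction product $L(\be_{w^{-1}(1)})\circ\cdots\circ L(\be_{w^{-1}(l)})$, using that one-row Specht modules are cyclic on the leading tableau and the $\psi$-relations; (3) invoke Lemma~\ref{LThu} repeatedly to move from the $\la$-order of components to the weakly-decreasing order, recording a shift of $\ell_\la$; (4) re-group equal roots and match the resulting shift $\sum_k p_k(p_k-1)/2$ with $\shift(\pi(\la))$ in the definition of $\bar\De(\pi(\la))$; (5) conclude $S^{\la^{\op}}\simeq\bar\De(\pi(\la))\langle\ell_\la\rangle$ by comparing graded characters, using that both sides have the same irreducible head $L(\pi(\la))$ (by \cite[Theorem 7.2]{KR}) and the same graded dimension. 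Step (3)–(4) is the crux; everything else is assembling known facts from \cite{BKgrdec,BKW,KR}.
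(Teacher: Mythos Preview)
Your plan is essentially the paper's own argument: the authors' proof is the single line ``This follows from Lemma~\ref{LThu} and \cite[Theorem~8.2]{KMR}.'' The cited KMR result is precisely the identification of the Specht module of a one-row multipartition with the ordered induction product of the one-dimensional modules attached to its rows (including the grading shift that accounts for $\shift(\pi(\la))$), so your steps (1), (2) and (4) are subsumed by that citation, and Lemma~\ref{LThu} then supplies your step (3).

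Your anticipated obstacle is not one. Because the multicharge is chosen with $k_1\geq\dots\geq k_l$, the roots $\be_m$ are already weakly sorted by initial vertex; hence the shortest sorting permutation lies in $\Si(\Omega)$, and every elementary swap along a reduced word for it exchanges two roots with the \emph{same} initial vertex --- exactly the hypothesis of Lemma~\ref{LThu}. There are no ``otherwise'' cases to worry about, and the number of such swaps is $\ell_\la$ on the nose.
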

\begin{proof}
This follows from Lemma~\ref{LThu} and \cite[Theorem 8.2]{KMR}
\end{proof}

For a root partition $\pi$ as in (\ref{ERP}) and a positive integer $p$, denote
\begin{align*}
c_\pi&:=q^{\shift(\pi)}\sum_{w \in \Si^\pi} q^{\deg \psi_w e(\bi_\pi)},
\\
l_p&:= \prod_{m=1}^{p} \frac{1}{1-q^{2m}},
\\
l_\pi&:=\prod_{k=1}^N l_{p_k}.
\end{align*}
Note that $c_\pi$ is the dimension of the reduced standard module $\bar\De(\pi)$ and $l_p$ is the dimension of the algebra $\La_p$ of symmetric polynomials in $p$ variables of degree  $2$.

\subsection{The formula}
Our dimension formula is now as follows:

\begin{Proposition} \label{PDim}
We have 
$$\dim_q R_\al=\sum_{\pi\in \Pi(\al)}l_\pi c_\pi^2.
$$
\end{Proposition}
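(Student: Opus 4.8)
The plan is to prove the dimension formula $\dim_q R_\al = \sum_{\pi\in\Pi(\al)} l_\pi c_\pi^2$ by a limiting argument from the known cyclotomic dimension formula \eqref{EWed2}. The idea is that as the dominant weight $\Omega$ grows, the cyclotomic quotient $R^\Omega_\al$ approximates $R_\al$ itself, and the combinatorics on the right-hand side of \eqref{EWed2} stabilizes to the combinatorics of root partitions once enough `room' is available. Concretely, I would fix $\al$ of height $d$ and choose $\Omega$ of large enough level $l$ (say $l \geq d$, with all $b_i$ large for the finitely many $i$ in the support of $\al$), so that every root partition $\pi \in \Pi(\al)$ arises as $\pi(\la)$ for some multipartition $\la$ whose components are non-trivial one-row partitions --- this is where Proposition~\ref{PThu} enters.

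First I would isolate, inside $\Par^\Omega_\al$, the subset $\Theta$ of multipartitions all of whose nonzero components are one-row partitions (this is the content of the commented-out lemma in the excerpt, which I would need to reinstate and prove). The key structural point is that the $\Si(\Omega)$-action on $\Theta$ organizes it so that each $\Si(\Omega)$-orbit corresponds to exactly one root partition $\pi$, and the orbit has a distinguished representative $\la$ with $\be_1 \geq \dots \geq \be_N$ (so $\ell_\la$ measures the deviation). Summing $q^{\deg}$ over standard tableaux of the members of such an orbit, and using Proposition~\ref{PThu} together with Proposition~\ref{PThu}'s identification $S^{\la^{\op}} \simeq \bar\De(\pi(\la))\langle \ell_\la\rangle$, I would show that the contribution of the orbit of $\pi$ to $\dim_q R^\Omega_\al$ is $c_\pi^2$ times a combinatorial factor coming from the multiplicities $b_i$ and the multiplicities $p_k$ in $\pi$. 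That factor, after the limiting procedure $b_i \to \infty$, should converge (coefficient-wise, as a formal power series) exactly to $l_\pi = \prod_k l_{p_k} = \prod_k \prod_{m=1}^{p_k}(1-q^{2m})^{-1}$; here Lemma~\ref{lem:nHdim} (the Poincaré polynomial computation) is the tool that turns a sum over coset representatives / tableaux-within-a-component into the product $\prod_{m=1}^{p}(1-q^{2m})^{-1}$ in the limit.

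Second, I would need to control the contributions to $\dim_q R^\Omega_\al$ coming from multipartitions $\la \notin \Theta$, i.e.\ those with a component having more than one row, and argue that in each fixed degree these contributions vanish once $l$ (and the relevant $b_i$) are taken large enough --- equivalently, that the `defect' words coming from non-one-row components are lexicographically/degree-wise negligible in the limit. Combined with the stabilization of $\dim_q R^\Omega_\al$ itself in each fixed degree (for $\al$ fixed, $\dim_q R^\Omega_\al \to \dim_q R_\al$ coefficient-wise as $\Omega$ grows, since $R_\al$ surjects onto $R^\Omega_\al$ with kernel eventually vanishing in each degree --- this uses Theorem~\ref{TBasisGen}), taking the limit of \eqref{EWed2} yields $\dim_q R_\al = \sum_{\pi\in\Pi(\al)} l_\pi c_\pi^2$.

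\textbf{The main obstacle} I anticipate is the bookkeeping in the limiting step: matching the squared tableau-degree sum over a $\Si(\Omega)$-orbit in $\Theta$ with $l_\pi c_\pi^2$ requires carefully tracking (a) how the degree shift $\ell_\la$ interacts with the shift $\shift(\pi)$ built into $\bar\De(\pi)$, (b) how standard tableaux of a multi-row-free multipartition decompose according to which component each box lies in, so that the `internal' symmetric-group factor for a block of $p_k$ equal roots produces precisely $l_{p_k}$ in the limit, and (c) a uniform (degree-wise) bound showing the non-$\Theta$ terms die. Step (c) in particular needs either an explicit estimate on $\deg(\Stab)$ for tableaux of components with $\geq 2$ rows, or an appeal to the weight theory of \cite{KR} identifying which words can support such modules; making this rigorous while keeping the argument `type-$A$ only' (as the authors flag) is the delicate part.
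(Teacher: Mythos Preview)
Your proposal is correct and follows essentially the same route as the paper: work degree by degree, choose all $b_i\gg 0$ so that $\deg_n(\dim_q R_\al)=\deg_n(\dim_q R^\Omega_\al)$, discard in \eqref{EWed2} all multipartitions with a component of two or more rows (your obstacle~(c) is dispatched by the explicit lower bound $\deg(\T^\la)\geq b_{k_m-1}$ in that case), then use Proposition~\ref{PThu} and a Poincar\'e-polynomial identity to match the surviving sum with $\sum_\pi l_\pi c_\pi^2$. The only sharpening you should make is to separate two sets that you have conflated under the name $\Theta$: the paper first passes to $\mathscr{R}$ (all components one-row) and then to the smaller $\Theta\subset\mathscr{R}$ (nonempty components also pushed to the bottom within each block of equal $k_m$), and it is this second reduction---via the generating function for partitions with at most $n_i(\la)$ parts---that produces the factor $\prod_i l_{n_i(\la)}$, after which the exact identity $l_\pi=\sum_{\la\in\Theta(\pi)}q^{2\ell_\la}\prod_i l_{n_i(\la)}$ (a parabolic Poincar\'e-polynomial identity, no further limit needed) finishes the computation.
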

\begin{proof}

Let us fix $n\in\Z$. It suffices to prove that $$\deg_n(\dim_q R_\al)=\deg_n(\sum_{\pi\in \Pi(\al)}l_\pi c_\pi^2).$$ Note that we can choose  $b_i\gg0$ for all $i$ in the support of $\al$, such that $\deg_n(\dim_q R_\al)=\deg_n(\dim_q R_\al^\Omega)$. Let us make this choice and prove that $\deg_n(\dim_q R_\al^\Omega)=\deg_n(\sum_{\pi\in \Pi(\al)}l_\pi c_\pi^2)$.

\vspace{1 mm}
\noindent
{\em Claim 1.} Let $\mathscr{R}$ be the set of all multipartitions $\la=(\la^{(1)},\dots,\la^{(l)})\in\Par_\al^\Omega$ such that each $\la^{(a)}$ is either empty or one row. Then
$$
\deg_n(\dim_q R^\Omega_\al)=\deg_n\Big(\sum_{\la \in \mathscr{R}} \Big(\sum_{\Stab \in \StTab(\la)} q^{\deg(\psi_{w_\Stab}e(\bi^\la))+\deg(\T^\la)}\Big)^2\Big).
$$

\noindent
{\em Proof of Claim 1.}
Note that $\deg(\psi_{w_\Stab}e(\bi^\la))\geq -2d!$. So in view of (\ref{EWed2}), it suffices to prove that $\deg(\T^\la)\gg n$ unless $\Stab\in\mathscr{R}$. Let $\la\in\Par_\al^\Omega$. If $\la^{(m)}\neq \emptyset$, then $k_m$ is in the support of $\al$. If $k_m-1$ is not in the support of $\al$, then $\la^{(m)}$ can only have one row. If $k_m-1$ is in the support of $\al$ and $\la$ has at least two rows, then  $\deg(T^\la)\geq b_{k_{m}-1}\gg0$. Claim 1 is proved.

Next, let $\la$ be a multipartition in $\mathscr{R}$. Set 
$$n_i(\la):=\sharp\{m\mid \la^{(m)}\neq\emptyset\ \text{and}\ k_m=i\}.$$

\vspace{1mm}
\noindent
{\em Claim 2.} Let $\Theta\subseteq\mathscr{R}$ be the subset of all multipartitions $\la\in\mathscr{R}$ such that whenever $\la^{(m)}\neq \emptyset$, then $\la^{(a)}\neq \emptyset$ for all $a>m$ with $k_a=k_m$. Then 
$$
\deg_n(\dim_q R^\Omega_\al)=\deg_n\Big(\sum_{\la \in \Theta} \Big(\sum_{\Stab \in \StTab(\la)} q^{\deg(\Stab)}\Big)^2\prod_{i\in I}l_{n_i(\la)}\Big).
$$

\noindent
{\em Proof of Claim 2.} Let $\la\in \mathscr{R}$ and let $\la^+\in\Theta$ be the multipartition obtained by shifting the non-empty components $\la^{(m)}$ of $\la$ corresponding to $m$ with the same $k_m$ (without changing the order of the non-empty components). To be more precise, for each $i$ with $b_i\neq 0$, each nonempty component $\la^{(m)}$ with $k_m=i$ gets moved to a larger position $m+\ga_i(m)$. Note that $\ga_i(m)\leq \ga_i(m')$ whenever $m>m'$ with $k_m=k_{m'}=i$ and $\la^{(m)},\la^{(m')}$ are non-empty. This defines a multipartition $\ga=(\ga_i)_{i\in I}$, where each partition $\ga_i$ has at most $n_i(\la)$ parts. For $\Stab\in\T(\la)$ let $\Stab^+\in\T(\la^+)$ be the corresponding tableau obtained from $\Stab$ by the same shift which takes $\la$ to $\la^+$. Note that $\deg(\Stab^+)=\deg(S)\sum_{i\in I}|\ga_i|$. Let $p_n(t)$ be the generating function for the partitions with at most $n$ parts. Note that $l_{n_i(\la)}=p_{n_i(\la)}(q^2)$. 
Now Claim 2 follows.

\vspace{1 mm}
We now finish the proof of the proposition. 
Denote $$\Theta(\pi):=\{\la \in \Theta| \pi(\la)=\pi\}.$$ 
For $\la\in\Theta(\pi)$, observe that the group $\Si_\pi$ is naturally a parabolic subgroup of $G:=\times_{i\in I}\Si_{n_i(\la)}$, giving an equality for Poincar\'e polynomials 
$$
P_G(t)=P_{\Si_\pi}(t)\sum_{\la\in\Theta(\pi)}t^{\ell_\la},
$$
where $\ell_\la$ is defined before Proposition~\ref{PThu}. This implies
$$l_\pi=\sum_{\la\in\Theta(\pi)}q^{2\ell_\la}\prod_{i\in I}l_{n_i(\la)}.
$$
Now, using Proposition~\ref{PThu}, we have
\begin{align*}
\sum_{\la \in \Theta} \Big(\sum_{\Stab \in \StTab(\la)} q^{\deg(\Stab)}\Big)^2\prod_{i\in I}l_{n_i(\la)}&=
\sum_{\la \in \Theta} \Big(\dim_q S^{\la^{\op}}\Big)^2\prod_{i\in I}l_{n_i(\la)}
\\
&=\sum_{\la \in \Theta} \Big(\dim_q \bar\De(\pi(\la))\langle\ell_\la\rangle\Big)^2\prod_{i\in I}l_{n_i(\la)}
\\
&=\sum_{\la \in \Theta} q^{2\ell_\la}c_\pi^2\prod_{i\in I}l_{n_i(\la)}
\\
&=\sum_{\pi\in\Pi(\al)}\ \sum_{\la\in\Theta(\pi)} q^{2\ell_\la}c_\pi^2\prod_{i\in I}l_{n_i(\la)}
\\
&=\sum_{\pi\in\Pi(\al)}\ c_\pi^2 l_\pi,
\end{align*}
as desired.
\end{proof}

\section{Affine nilHecke algebra}\label{SNH}
In this section we will review mostly well-known facts about the nilHecke algebra,  and obtain a special case of our main result for this algebra. This special case will be needed in the proofs of the general case. 

\subsection{Definition and basic properties}
We denote $a^{th}$ nilHecke algebra by $H_a$. That is, $H_a$ is the associative, unital $(\Z$-)algebra generated by $\{y_1, \dots, y_a, \psi_1, \dots, \psi_{a-1}\}$ subject to the relations
\begin{align}
  \psi_{r}^2 &= 0
   \label{eq:HeckeRel1}
   \\
  \psi_{r} \psi_{s} &= \psi_{s} \psi_{r} \qquad \textup{if $|r-s| > 1$}
   \label{eq:HeckeRel2}
   \\
  \psi_{r} \psi_{r+1} \psi_{r} &= \psi_{r+1} \psi_{r} \psi_{r+1}
   \label{eq:HeckeRel3}
  \\
  \psi_{r} y_{s} &= y_{s} \psi_{r} \qquad \textup{if $s\neq r,r+1$}
   \label{eq:HeckeRel4}
  \\
  \psi_{r} y_{r+1} &= y_{r} \psi_{r} + 1
   \label{eq:HeckeRel5}
  \\
  y_{r+1} \psi_{r} &= \psi_{r} y_{r} + 1.
   \label{eq:HeckeRel6}
\end{align}
  For $w \in \Si_a$, pick any reduced decomposition $w = s_{i_1} \dots s_{i_k}$. We define $\psi_w = \psi_{i_1} \dots \psi_{i_k}$. In view of the relations above, $\psi_w$ does not depend on the choice of reduced decomposition.
We define $\deg(y_r) = 2$ and $\deg(\psi_r) = -2$; this turns $H_a$ into a graded algebra. 

 There is an involutive homogeneous degree zero anti-automorphism $\tau$ of  $H_a$ fixing the standard generators of $H_a$. We write $h^\tau$ instead of $\tau(h)$ for $h\in H_a$. 
 Given a (graded) left $H_a$-module $M$, we write $M^\tau$ for the (graded) right $H_a$-module given by twisting with $\tau$.
The following result gives {\em standard bases} of $H_a$:

\begin{Theorem} \label{TBasis} 
We have
\begin{enumerate}
\item[{\rm (i)}] $\{\psi_w y_1^{m_1} \dots y_a^{m_a}\mid w \in \Si_a,\ m_1,\dots,m_a \geq 0\}$ is a $\Z$-basis of $H_a$.
\label{zzPBW1}
\item[{\rm (ii)}] $\{y_1^{m_1} \dots y_a^{m_a}\psi_w \mid w \in \Si_a,\ m_1,\dots,m_a \geq 0\}$ is a $\Z$-basis of $H_a$. 
\label{zzPBW2}
\end{enumerate}
In particular,
$$\dim_q(H_a)=\frac{1}{(1-q^2)^a} \sum_{w \in \Si_a} q^{\deg(\psi_w)}.
$$
\end{Theorem}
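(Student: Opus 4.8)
\textbf{Proof plan for Theorem~\ref{TBasis}.}
The strategy is the standard one for establishing PBW-type bases of algebras defined by generators and relations: first show the proposed sets \emph{span} $H_a$, then show they are linearly independent by exhibiting a faithful representation (or by counting dimensions against a known model). For the spanning part, I would argue that any word in the generators $y_1,\dots,y_a,\psi_1,\dots,\psi_{a-1}$ can be rewritten, using relations \eqref{eq:HeckeRel1}--\eqref{eq:HeckeRel6}, into a $\Z$-linear combination of monomials of the form $\psi_w y_1^{m_1}\cdots y_a^{m_a}$. The commutation relations \eqref{eq:HeckeRel4}--\eqref{eq:HeckeRel6} let one push all the $y$'s to the right past the $\psi$'s (producing lower-order error terms with fewer $\psi$'s each time \eqref{eq:HeckeRel5} or \eqref{eq:HeckeRel6} is applied), and the braid and commuting relations \eqref{eq:HeckeRel2}--\eqref{eq:HeckeRel3} together with $\psi_r^2=0$ let one reduce the $\psi$-part to $\psi_w$ for a reduced word $w$; an induction on the number of $\psi$-letters, with a secondary induction on length, makes this precise. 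Part~(ii) is entirely symmetric, using the anti-automorphism $\tau$ fixing the generators (which sends the basis in (i) to the basis in (ii)), or alternatively by pushing $y$'s to the left instead.

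For linear independence, the cleanest route is the faithful polynomial representation: $H_a$ acts on $\Pol_a=\Z[y_1,\dots,y_a]$ with $y_r$ acting by multiplication and $\psi_r$ acting by the divided difference operator $\partial_r(f)=(f-s_r f)/(y_r-y_{r+1})$. One checks \eqref{eq:HeckeRel1}--\eqref{eq:HeckeRel6} hold for these operators (routine), so this is a genuine $H_a$-module, and then one shows the operators $\{\psi_w y_1^{m_1}\cdots y_a^{m_a}\}$ are linearly independent as endomorphisms of $\Pol_a$ — for instance by evaluating on suitable monomials and using that $\partial_w=\psi_w$ for $w$ reduced, with $\partial_{w_0}$ surjective onto $\Pol_a^{\Si_a}$, to separate the $w$-components, after which the $y$-monomial part is separated by the obvious grading/degree argument in $\Pol_a$. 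This forces the claimed sets to be bases (and incidentally identifies $H_a$ with $\End_{\Pol_a^{\Si_a}}(\Pol_a)$, though we do not need that here).

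Finally, the graded dimension formula follows immediately from part~(i): since $\{\psi_w\}_{w\in\Si_a}$ contributes the factor $\sum_{w\in\Si_a}q^{\deg\psi_w}$ and each $y_r^{m_r}$ ranges freely over $m_r\ge 0$ with $\deg y_r=2$, contributing a factor $\sum_{m\ge 0}q^{2m}=1/(1-q^2)$ per variable, we get
\[
\dim_q(H_a)=\frac{1}{(1-q^2)^a}\sum_{w\in\Si_a}q^{\deg(\psi_w)}.
\]
(One can also rewrite $\sum_{w\in\Si_a}q^{\deg\psi_w}=\sum_{w\in\Si_a}q^{-2\ell(w)}$ and invoke Lemma~\ref{lem:nHdim} if a closed product form is wanted.) The main obstacle is the spanning step: getting the bookkeeping of the double induction right so that every application of \eqref{eq:HeckeRel5}/\eqref{eq:HeckeRel6} and every braid move is seen to strictly decrease the relevant complexity measure; this is where one must be careful, whereas linear independence via the divided-difference representation is essentially formal once the action is verified.
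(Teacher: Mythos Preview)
The paper does not actually prove Theorem~\ref{TBasis}; it is stated there as a standard, well-known fact about the affine nilHecke algebra and then used without further justification. Your outline is precisely the standard argument (spanning by a straightening induction using \eqref{eq:HeckeRel1}--\eqref{eq:HeckeRel6}, linear independence via the faithful divided-difference action on $\Pol_a$, and the graded dimension read off from the basis), and it is correct. One cosmetic point: your divided difference has the opposite sign convention from the paper's \eqref{ENewton}, but this is immaterial to the argument.
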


In view of the theorem we can consider the polynomial algebra 
$$\Pol_a:=\Z[y_1, \dots, y_a]$$ 
as a subalgebra of $H_a$. 
Moreover, let
$$
\Sym_a:=\Z[y_1, \dots, y_a]^{\Si_a}
$$
be the algebra of symmetric functions. The following is well-known, see e.g. \cite{Manivel}. 

\begin{Theorem} \label{TCenter} 
The center of $H_a$ is given by $Z(H_a) = \Sym_a$. 
\end{Theorem}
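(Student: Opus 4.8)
The plan is to show the two inclusions $Z(H_a)\supseteq\Sym_a$ and $Z(H_a)\subseteq\Sym_a$ separately, using the two standard bases from Theorem~\ref{TBasis} and the faithful polynomial representation of $H_a$.

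\textbf{Step 1: $\Sym_a$ is central.} First I would recall that $H_a$ acts faithfully on $\Pol_a=\Z[y_1,\dots,y_a]$, with $y_r$ acting by multiplication and $\psi_r$ acting as the divided difference operator $\partial_r(f)=(f-s_rf)/(y_r-y_{r+1})$; faithfulness follows from Theorem~\ref{TBasis}(i) by a leading-term argument (the operators $\partial_w y^{\bm}$ are linearly independent on $\Pol_a$). A symmetric polynomial $f\in\Sym_a$ obviously commutes with every $y_r$ (multiplication operators commute), and it commutes with each $\partial_r$ because $\partial_r(fg)=f\,\partial_r(g)$ when $f$ is $s_r$-invariant — this is the Leibniz rule for divided differences. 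Since the $y_r$ and $\psi_r$ generate $H_a$ and the representation on $\Pol_a$ is faithful, $f\in Z(H_a)$.

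\textbf{Step 2: the center is contained in $\Sym_a$.} Take $z\in Z(H_a)$. Using Theorem~\ref{TBasis}(ii), write $z=\sum_{w\in\Si_a} f_w\,\psi_w$ with $f_w\in\Pol_a$. Commuting $z$ with each $y_r$ and comparing coefficients in the basis $\{y^{\bm}\psi_w\}$ will force $f_w=0$ for all $w\neq e$: indeed, in the $w_0$-term (or more carefully, working down from a maximal $w$ in the support with respect to length/Bruhat order) the relation $zy_r=y_rz$ produces, via \eqref{eq:HeckeRel5}–\eqref{eq:HeckeRel6}, a term $f_w\,\psi_w\,y_r - y_r f_w\,\psi_w$ whose top part is $f_w\cdot(y_{w^{-1}(r)}-y_r)\psi_w$ plus strictly shorter terms; choosing $r$ with $w^{-1}(r)\neq r$ forces $f_w=0$. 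Hence $z=f_e\in\Pol_a$. Then $z$ must also commute with every $\psi_r$, and on $\Pol_a$ the relation $\psi_r z = z\psi_r$ reads $\partial_r(zg)=z\,\partial_r(g)$ for all $g$; taking $g=1$ gives $\partial_r(z)=0$, i.e. $z$ is $s_r$-invariant. Since this holds for all $r$, $z\in\Sym_a$.

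\textbf{Main obstacle.} The genuinely delicate part is Step~2: extracting the vanishing $f_w=0$ for $w\neq e$. One has to order the terms $f_w\psi_w$ correctly (by length of $w$, then break ties) and track how multiplication by $y_r$ on the right mixes $\psi_w y_r$ into $y_{?}\psi_w$ plus a sum of $\psi_{w'}$ with $\ell(w')<\ell(w)$ via the nilHecke relations; one must check that the top-length contribution cannot be cancelled and that one can always pick an index $r$ moved by $w$. Everything else (faithfulness of the polynomial representation, the Leibniz rule for $\partial_r$) is standard and can be cited to \cite{Manivel}. Alternatively, one can sidestep Step~2 by the dimension count: $H_a\cong\Mat_{a!}(\Sym_a)$ as $\Sym_a$-algebras (a known fact, again \cite{Manivel}), whence $Z(H_a)=\Sym_a$ immediately; but since the paper only asks to record Theorem~\ref{TCenter} as well-known, I would simply cite \cite{Manivel} for the full statement and include the short faithful-representation argument above as a sketch.
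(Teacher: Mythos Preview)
The paper does not prove Theorem~\ref{TCenter} at all: it is stated as well-known with a reference to \cite{Manivel}, and no argument is supplied. Your proposal therefore goes well beyond what the paper does. Your sketch is correct: Step~1 is the standard Leibniz-rule computation, and the leading-term argument in Step~2 works (with $\psi_w y_r \equiv y_{w(r)}\psi_w$ modulo shorter $\psi_{w'}$, so the top coefficient in $zy_r-y_rz$ is $f_w(y_{w(r)}-y_r)$; pick $r$ with $w(r)\neq r$). Your alternative via the matrix isomorphism $H_a\cong\Mat_{a!}(\Sym_a)$ is exactly the route the paper takes later anyway (this isomorphism is the content of the theorem immediately following Theorem~\ref{thm:nHFacts}), and is the cleanest justification. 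Since the paper is content to cite \cite{Manivel}, your final suggestion---cite the reference and optionally include the short faithful-representation sketch---matches the paper's treatment precisely.
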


\subsection{\boldmath The idempotent $e_a$}
It is well-known that $H_a$ can be realized as  the subalgebra of the endomorphism algebra $\End_\Z(\Pol_a)$ generated by (multiplication by) each $y_r$, and the divided difference operators
\begin{equation}\label{ENewton}
  \psi_r(f) = \frac{f - s_r f}{y_{r+1} - y_r},
\end{equation}
  where $(s_r (f))(y_1, \dots, y_a) = f(y_1, \dots, y_{r+1}, y_r, \dots, y_a)$. In light of this description there is an $H_a$-module structure on $\Pol_a$; we shall refer to this module also as $\Pol_a$. 

  Let 
  $$\de_a = y_2 y_3^2 \dots y_a^{a-1},$$ 
  and define $w_0 \in \Si_a$ to be the longest element. It is noticed in \cite[Section 2.2]{KL1} that 
  $$e_a := \psi_{w_0} \de_a$$ 
  is an idempotent. Then $\psi_{w_0}\de_a\psi_{w_0}\de_a=\psi_{w_0}\de_a$ implies 
 \begin{equation}\label{EZD}
  e_a \psi_{w_0} = \psi_{w_0},
 \end{equation}
since by Theorem~\ref{TBasis}(i), $\de_a$ is not a zero divisor. We will need the following facts coming from the theory of Schubert polynomials, see e.g. \cite[Section 10.4]{Fulton}.

\begin{Theorem}\label{thm:polySym}
$\Pol_a$ is a free $\Sym_a$-module with basis $\{\psi_w(\de_a)\ |\ w \in \Si_a\}$. Moreover, $\psi_{w_0}(\de_a)=1$. 
\end{Theorem}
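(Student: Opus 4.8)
The statement to prove is Theorem~\ref{thm:polySym}: $\Pol_a$ is a free $\Sym_a$-module with basis $\{\psi_w(\de_a)\mid w\in\Si_a\}$, and $\psi_{w_0}(\de_a)=1$.

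\medskip
\noindent\textbf{Proof proposal.}

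The plan is to deduce both assertions from standard facts about the coinvariant algebra and Schubert polynomials, but to present a self-contained argument that leans only on what is available in the excerpt (Theorems~\ref{TBasis} and~\ref{TCenter}, together with the divided-difference description~\eqref{ENewton}). First I would prove the identity $\psi_{w_0}(\de_a)=1$. Since $\psi_{w_0}=\psi_{i_1}\cdots\psi_{i_N}$ for a reduced word of $w_0$ (with $N=\binom a2$), and $\psi_r$ lowers polynomial degree by exactly $1$ while $\deg\de_a = 0+1+2+\cdots+(a-1)=N$, the element $\psi_{w_0}(\de_a)$ is a polynomial of degree $0$, i.e. a constant. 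To pin down the constant I would argue by induction on $a$: writing $w_0 = w_0' \cdot s_{a-1}s_{a-2}\cdots s_1$ where $w_0'$ is the longest element of $\Si_{a-1}$ acting on $y_1,\dots,y_{a-1}$, one computes that $(\psi_1\psi_2\cdots\psi_{a-1})(\de_a)$ — note $\de_a=\de_{a-1}\cdot y_a^{a-1}$ and $\de_{a-1}$ is symmetric in the first $a-1$ variables only through... — actually the cleaner route is: apply the operators in the order of the symmetric reduced decomposition, or simply track the leading monomial. The key computational lemma is that $\psi_r(y_{r+1}^m) = y_r^{m-1}+y_r^{m-2}y_{r+1}+\cdots+y_{r+1}^{m-1}$, applied repeatedly. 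I expect this constant-chasing induction to be the only place requiring genuine (if routine) calculation; the cleanest presentation is to cite \cite[Section 10.4]{Fulton} for $\psi_{w_0}(\de_a)=1$, as the authors already signal.

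Next I would establish the freeness statement. The standard fact is that $\Pol_a$ is free over $\Sym_a$ of rank $a!$ (e.g. because $\Sym_a\hookrightarrow\Pol_a$ is a polynomial extension with $\Pol_a$ finite over it, and the coinvariant algebra $\Pol_a/\Sym_a^+\Pol_a$ has dimension $a!$ over $\QQ$, hence over $\Z$ by a freeness/Nakayama argument over the graded local setup — or simply cite Fulton). Granting rank $a!$, it suffices to show the $a!$ elements $\{\psi_w(\de_a)\mid w\in\Si_a\}$ generate $\Pol_a$ as a $\Sym_a$-module; freeness then follows since any surjection of free modules of the same finite rank over a (graded) Noetherian ring is an isomorphism. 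Alternatively, and this is the approach I would actually carry out, I would show directly that these elements are a $\Sym_a$-basis by a degree/triangularity argument: order $\Si_a$ so that $\deg\psi_w(\de_a)=\ell(w_0)-\ell(w)=N-\ell(w)$, and show the $\psi_w(\de_a)$ are ``triangular'' with respect to a monomial basis of the coinvariant algebra (e.g. the Artin basis $\{y_1^{a_1}\cdots y_a^{a_a}: 0\le a_i\le a-i\}$), using that $\psi_u\psi_w(\de_a)=\psi_{uw}(\de_a)$ if $\ell(uw)=\ell(u)+\ell(w)$ and $=0$ otherwise.

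The main obstacle, I expect, is not any single hard step but rather deciding how much of the Schubert-polynomial machinery to reprove versus cite: a fully honest proof of ``$\Pol_a$ free over $\Sym_a$ with this particular basis'' is essentially the theory of Schubert polynomials for the symmetric group, and reproducing it would be disproportionate. So the realistic plan is: (1) cite \cite[Section 10.4]{Fulton} (or \cite{Manivel}) for the freeness of $\Pol_a$ over $\Sym_a$ and for $\psi_{w_0}(\de_a)=1$; (2) give the short argument that the divided differences act on the $\psi_w(\de_a)$ by $\psi_u\cdot\psi_w(\de_a)=\psi_{uw}(\de_a)$ or $0$ according to lengths, which together with $\psi_{w_0}(\de_a)=1$ shows every $\psi_w(\de_a)\ne 0$ and that these $a!$ elements are distinct in distinct degrees-plus-leading-terms; (3) conclude linear independence over $\Sym_a$ by comparing graded dimensions — the Poincaré polynomial of $\Pol_a$ over $\Sym_a$ is $\sum_{w}q^{2(N-\ell(w))}=\sum_w q^{2\ell(w)}$, which matches $\sum_w q^{\deg\psi_w(\de_a)}$ exactly, forcing the spanning set to be a basis. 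Step (3) is the cleanest rigorous finish and meshes well with the graded-dimension bookkeeping already set up in Lemma~\ref{lem:nHdim} and Theorem~\ref{TBasis}.
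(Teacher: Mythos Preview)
Your proposal is correct and aligns with the paper's treatment: the paper gives no proof of Theorem~\ref{thm:polySym} at all, stating it as a known fact ``coming from the theory of Schubert polynomials, see e.g.\ \cite[Section~10.4]{Fulton}.'' Your plan~(1) to cite \cite[Section~10.4]{Fulton} is therefore exactly what the paper does; the additional sketching in your steps~(2) and~(3) is extra but harmless.
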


The following two theorems are known, but we sketch their proofs for the reader's convenience.

\begin{Theorem}\label{thm:nHFacts}
  The following things are true: 
\begin{enumerate}
  \item[{\rm (i)}] $_{H_a}\!\Pol_a \iso H_a e_a$,  $f \mapsto f e_a$.\label{zzHe}
  \item[{\rm (ii)}] $(\Pol_a^\tau)_{H_a} \iso e_a H_a$,  $f \mapsto e_a \psi_{w_0} f$.\label{zzeH}
  \item[{\rm (iii)}] $\Sym_a \iso e_a H_a e_a$, $f \mapsto f e_a$.\label{zzeHe}
  \end{enumerate}
\end{Theorem}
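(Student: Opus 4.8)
The plan is to prove the three isomorphisms of Theorem~\ref{thm:nHFacts} in order, using the standard bases from Theorem~\ref{TBasis}, the Schubert polynomial basis of Theorem~\ref{thm:polySym}, and the identities $e_a=\psi_{w_0}\de_a$, $e_a\psi_{w_0}=\psi_{w_0}$.

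\medskip\noindent\textbf{Part (i).} First I would show that the map $\Pol_a\to H_ae_a$, $f\mapsto fe_a$, is a well-defined homomorphism of left $H_a$-modules. Here $\Pol_a$ is the $H_a$-module coming from the faithful representation on polynomials via divided differences, so we must check that $h\cdot f\mapsto (h\cdot f)e_a = h(fe_a)$ for $h\in H_a$, $f\in\Pol_a$; equivalently that the assignment intertwines the polynomial action with left multiplication inside $H_ae_a$. Concretely it suffices to verify this on generators $y_r$ and $\psi_r$: for $y_r$ it is immediate since $y_r$ acts by multiplication and commutes into place; for $\psi_r$ one uses that in $H_a$ we have $\psi_r f - (s_rf)\psi_r = \psi_r(f)$ (the divided difference), which is exactly relations~\eqref{eq:HeckeRel4}--\eqref{eq:HeckeRel6} applied inductively to a monomial $f$. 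Since $\psi_re_a$ lies in... actually the cleanest route: using Theorem~\ref{thm:polySym}, the elements $\{\psi_w(\de_a)e_a \mid w\in\Si_a\}$ span the image, and I claim $\psi_w(\de_a)e_a$ can be rewritten. In fact $e_a = \psi_{w_0}\de_a$, and for any $f$, $fe_a = f\psi_{w_0}\de_a$; pushing $f$ through $\psi_{w_0}$ using the divided-difference relations expresses $f\psi_{w_0}$ as $\sum_{w} c_w(f)\,\psi_w$ with $c_{w_0}(f)=\psi_{w_0}(f)$ and higher terms. Taking $f=\psi_u(\de_a)$ and using $\psi_{w_0}(\psi_u\de_a)=\delta_{u,w_0}$ (the pairing from Schubert calculus, part of Theorem~\ref{thm:polySym}) gives that the images of the basis $\{\psi_u(\de_a)\}$ of $\Pol_a$ over $\Sym_a$ map to a triangular system in the standard basis $\{\psi_w\cdot(\text{symmetric polys})\}$ of $H_ae_a$. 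This yields bijectivity. I expect the bookkeeping of this triangularity to be the main obstacle — one must be careful that $f\psi_{w_0} = \sum_w (\text{symmetric coefficient})\psi_w$ and identify the leading term correctly.

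\medskip\noindent\textbf{Part (ii).} Apply the anti-involution $\tau$ to part (i). Since $\tau$ fixes the generators, $\tau(e_a) = \tau(\psi_{w_0}\de_a) = \de_a\psi_{w_0}^\tau = \de_a\psi_{w_0^{-1}}$; using $w_0=w_0^{-1}$ and Lemma~\ref{LLR} (the left-right symmetric reduced decomposition of $w_0$, which guarantees $\psi_{w_0}^\tau=\psi_{w_0}$), we get $\tau(e_a)=\de_a\psi_{w_0}$. Then $\tau(e_a)$ is again idempotent and $\tau(e_aH_a)=(e_aH_a)^\tau$ as right modules becomes the statement that $e_aH_a\cong(H_ae_a)^\tau$... more directly: applying $\tau$ to the isomorphism $\Pol_a\iso H_ae_a$ of left modules gives an isomorphism of right modules $(\Pol_a^\tau)\iso (H_ae_a)^\tau = e_a^\tau H_a = \de_a\psi_{w_0}H_a$. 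Since $e_a\psi_{w_0}=\psi_{w_0}$ we have $\psi_{w_0}\de_a\psi_{w_0}=\psi_{w_0}$, and multiplying appropriately shows $\de_a\psi_{w_0}H_a = e_aH_a$ after conjugating by the unit coming from $\de_a$ not being a zero divisor (Theorem~\ref{TBasis}(ii)); tracking the map $f\mapsto f^\tau\mapsto$ its image gives $f\mapsto e_a\psi_{w_0}f$ as claimed. I would need to double-check the precise form of the resulting map, which is the fiddly point here.

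\medskip\noindent\textbf{Part (iii).} Combine (i) and (ii). By (i), $e_aH_ae_a = e_a\cdot H_ae_a \cong e_a\cdot\Pol_a$ via $f\mapsto fe_a$; but $e_a\cdot f = \psi_{w_0}\de_a f = \psi_{w_0}(\de_a f)$ inside $\End(\Pol_a)$ when acting, and the image $e_a\Pol_a\subseteq\Pol_a$ equals $\psi_{w_0}(\Pol_a)$. By Theorem~\ref{thm:polySym}, $\psi_{w_0}(\Pol_a) = \Sym_a$ (since $\psi_{w_0}$ applied to the $\Sym_a$-basis $\{\psi_w(\de_a)\}$ lands in $\Sym_a$, with $\psi_{w_0}(\de_a)=1$, and $\psi_{w_0}$ is $\Sym_a$-linear). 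So $e_aH_ae_a\cong\Sym_a$ as $\Z$-modules; that the map $f\mapsto fe_a$ ($f\in\Sym_a$) is an algebra homomorphism follows because for $f,g\in\Sym_a=Z(H_a)$ (Theorem~\ref{TCenter}) we have $(fe_a)(ge_a) = fg\,e_a^2 = (fg)e_a$, using centrality of $f$ to move it past $e_a$. Injectivity is immediate since $fe_a=0$ with $f$ central and $e_a\neq0$ forces $f=0$ by Theorem~\ref{TBasis}; surjectivity onto $e_aH_ae_a$ follows from the module computation above. The main thing to verify carefully is the identification $\psi_{w_0}(\Pol_a)=\Sym_a$ and that the composite map is exactly $f\mapsto fe_a$ rather than some twist of it.
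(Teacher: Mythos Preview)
Your argument for part (iii) is essentially the paper's, but parts (i) and (ii) have real gaps.

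For (i) you are missing the one observation that makes everything immediate: since $e_a=\psi_{w_0}\de_a$ and $\psi_w\psi_{w_0}=0$ for every $w\neq1$, the basis of Theorem~\ref{TBasis}(ii) shows at once that $H_ae_a$ is spanned by $\{fe_a\mid f\in\Pol_a\}$; linear independence follows by right-multiplying by $\psi_{w_0}$ and using $e_a\psi_{w_0}=\psi_{w_0}$. The same fact $\psi_re_a=0$ also finishes the equivariance check you abandoned midway: $\psi_r\cdot(fe_a)=(s_rf)\psi_re_a+\psi_r(f)e_a=\psi_r(f)e_a$. Your triangularity scheme is both unnecessary and, as stated, incorrect: you write $\psi_{w_0}(\psi_u(\de_a))=\delta_{u,w_0}$, but composition of divided-difference operators gives $\psi_{w_0}\circ\psi_u=0$ unless $u=1$, so the value is $\delta_{u,1}$, and no useful triangularity in the direction you want follows.

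For (ii), applying $\tau$ to (i) lands you in $\tau(e_a)H_a=\de_a\psi_{w_0}H_a$, which is \emph{not} equal to $e_aH_a$: already for $a=2$ one has $\tau(e_2)=y_2\psi_1=\psi_1y_1+1\notin\psi_1\Pol_2=e_2H_2$. Your proposed fix, ``conjugating by the unit coming from $\de_a$ not being a zero divisor'', is precisely the gap --- $\de_a$ is not a unit. The two right ideals are isomorphic (left multiplication by $\psi_{w_0}$ is an isomorphism $\tau(e_a)H_a\to e_aH_a$ and does recover the map $f\mapsto e_a\psi_{w_0}f$), but you have not supplied this. The paper sidesteps the issue entirely by rerunning the argument of (i) directly on the other side, using $\psi_{w_0}\psi_w=0$ for $w\neq1$ together with Theorem~\ref{TBasis}(i).
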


\begin{proof}
  (i) By Theorem~\ref{TBasis}(ii), $H_a e_a$ is spanned by elements of the form $y_1^{m_1} \dots y_a^{m_a} \psi_w e_a$. But $\psi_w \psi_{w_0} = 0$ whenever $w \neq 1$, so $H_a e_a$ is in fact spanned by elements of the form $y_1^{m_1} \dots y_a^{m_a} e_a$. By (\ref{EZD}), we have 
$$
y_1^{m_1} \dots y_a^{m_a} e_a\psi_{w_0}=y_1^{m_1} \dots y_a^{m_a} \psi_{w_0}. 
$$  
Since such elements are linearly independent, our spanning set above is actually a basis. In particular, the map
$$
    \Pol_a \to H_a e_a, \ 
    f   \mapsto f e_a
    $$
  is an isomorphism of $\Z$-modules. To show that it is $H_a$-equivariant, note that  the action of $y_r$ is preserved, and furthermore 
  $$\psi_r f e_a = f \psi_r e_a + \psi_r(f) e_a = \psi_r(f) e_a,
  $$
  where $\psi_r(f)$ is the action on $\Pol_a$ defined in (\ref{ENewton}).

  (ii) is proved similarly to (i).

  (iii) $e_a H_a e_a$ is spanned by the elements $e_a f e_a$ with $f \in \Pol_a$. Using (i) we get
  \[
    e_a f e_a = \psi_{w_0} \de_a f e_a = \psi_{w_0}(\de_a f) e_a,
  \]
  and $\psi_{w_0}(\de_a f) \in \Sym_a$. We thus see that $e_a H_a e_a$ is spanned by $b e_a$ with $b \in \Sym_a$. Rewrite again: $b e_a = e_a b = \psi_{w_0} \de_a b$. Now, by Theorem~\ref{TBasis}(i), 
  \begin{align*}
   \Sym_a \to e_a H_a e_a,\  
    b \mapsto b e_a
  \end{align*}
  is an isomorphism.
\end{proof}

\begin{Theorem} 
Let $\iota:H_a\to \End_\Z(\Pol_a)$ be the map which comes from the action of $H_a$ on $\Pol_a$. This map yields an isomorphism of algebras 
$$
\iota:H_a\iso \End_{\Sym_a}(\Pol_a).
$$
\end{Theorem}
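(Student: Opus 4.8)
The plan is to exhibit $\iota$ as an isomorphism by identifying both sides as free modules of the same rank over $\Sym_a$ and checking that $\iota$ respects these structures. First I would recall from Theorem~\ref{TCenter} that $\Sym_a = Z(H_a)$, so $\iota$ is $\Sym_a$-linear (here $\Sym_a$ acts on $\Pol_a$ by multiplication, which lands in $\End_{\Sym_a}(\Pol_a)$ because symmetric polynomials commute with the divided-difference operators $\psi_r$ — indeed $\psi_r(fg) = f\psi_r(g)$ for $f$ symmetric in $y_r,y_{r+1}$). Thus the target $\End_{\Sym_a}(\Pol_a)$ makes sense, and $\iota$ is a homomorphism of $\Sym_a$-algebras.

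Next I would compute ranks. By Theorem~\ref{thm:polySym}, $\Pol_a$ is free over $\Sym_a$ of rank $a!$, so $\End_{\Sym_a}(\Pol_a)$ is free over $\Sym_a$ of rank $(a!)^2$. On the other side, Theorem~\ref{TBasis}(i) together with Lemma~\ref{lem:nHdim} shows $H_a$ is free over $\Sym_a$ of rank $(a!)^2$: writing $\Pol_a = \bigoplus_{w} \Sym_a\,\psi_w(\de_a)$ and using the basis $\{\psi_w y^{\bm}\}$, one sees $H_a = \bigoplus_{w,v} \Sym_a\,\psi_w\,\psi_v(\de_a)$ (or more simply, $\dim_q H_a = \big(\dim_q \Sym_a\big)\cdot\big(\sum_w q^{\deg\psi_w}\big)^2$ times nothing extra, matching $(a!)^2$ after specialization). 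So both source and target are free $\Sym_a$-modules of the same finite rank.

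The core of the argument is then to check $\iota$ is injective (equivalently, by equal ranks and a Nakayama-type / generic-flatness argument, an isomorphism). For injectivity: if $h = \sum_{w} \psi_w g_w$ with $g_w \in \Pol_a$ acts as zero on $\Pol_a$, apply the standard triangularity of divided-difference operators with respect to the filtration of $\Pol_a$ by degree (or by Schubert-polynomial expansion) to conclude all $g_w = 0$; this is the usual faithfulness of the polynomial representation of $H_a$. Having injectivity, I would argue surjectivity by comparing graded dimensions: $\dim_q \End_{\Sym_a}(\Pol_a)$ can be computed from Theorem~\ref{thm:polySym} as $\dim_q \Sym_a \cdot \big(\sum_{w\in\Si_a} q^{\deg\psi_w(\de_a)}\big)^2$, and since $\deg\psi_w(\de_a) = \deg\de_a - 2\ell(w) = \deg(\psi_{w_0 w}\de_a \cdots)$ matches $\deg \psi_{w_0w^{-1}}$ appropriately, this equals $\dim_q H_a$ by Theorem~\ref{TBasis}. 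An injective degree-zero map between graded modules of equal (finite in each degree) graded dimension is an isomorphism.

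The main obstacle I expect is the bookkeeping in the dimension comparison: one must match $\sum_w q^{\deg\psi_w(\de_a)}$ against $\sum_w q^{\deg\psi_w}$ via $\deg\psi_w(\de_a) = \deg\de_a - 2\ell(w)$ and $\deg\psi_w = -2\ell(w)$, so that $\dim_q\End_{\Sym_a}(\Pol_a) = \dim_q\Sym_a \cdot q^{2\deg\de_a}\big(\sum_w q^{-2\ell(w)}\big)^2$ coincides with $\dim_q H_a = \dim_q\Sym_a \cdot \big(\sum_w q^{-2\ell(w)}\big)^2 \cdot (\text{correction})$ — the powers of $q$ coming from $\de_a$ versus the overall grading shift must be reconciled, and it is cleanest to note $\iota$ is degree-preserving once one grades $\End_{\Sym_a}(\Pol_a)$ so that $\iota(e_a)$ sits in degree $0$, equivalently to work with $H_a e_a \cong \Pol_a$ from Theorem~\ref{thm:nHFacts}(i) and deduce $\End_{\Sym_a}(\Pol_a) \cong \End_{H_a e_a H_a}(\cdots)$. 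Alternatively, and perhaps more robustly, one can avoid the grading shift entirely: use Theorem~\ref{thm:nHFacts} to write $H_a \cong H_a e_a \otimes_{e_a H_a e_a} e_a H_a \cong \Pol_a \otimes_{\Sym_a} \Pol_a^\tau$ as $\Sym_a$-modules (this is essentially the Morita-type statement that $e_a$ is a full idempotent, which follows since $H_a e_a H_a = H_a$ — itself provable because $1 = \sum \pm \psi_{w} \de_a \cdots$, or by a dimension count), and then $\End_{\Sym_a}(\Pol_a) \cong \Pol_a \otimes_{\Sym_a} \Hom_{\Sym_a}(\Pol_a,\Sym_a) \cong \Pol_a\otimes_{\Sym_a}\Pol_a^\tau$ using freeness, giving the isomorphism directly; then one only has to check the composite agrees with $\iota$ on generators, which is a short relation check.
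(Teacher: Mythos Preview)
Your overall strategy coincides with the paper's: show the image lies in $\End_{\Sym_a}(\Pol_a)$ via $\Sym_a=Z(H_a)$, prove injectivity of $\iota$, and then conclude by a graded dimension comparison using Theorem~\ref{thm:polySym}. So the approach is not genuinely different.

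Two places are worth tightening. First, your injectivity argument (``standard triangularity'') is exactly what the paper makes precise in one line: write $x=\sum_u f_u\psi_u$ with $f_u\in\Pol_a$, pick $u$ Bruhat-minimal with $f_u\neq 0$, and evaluate at the Schubert polynomial $\psi_{u^{-1}w_0}(\de_a)$; since $\psi_v\psi_{u^{-1}w_0}\neq 0$ forces $v\leq u$, the only surviving term is $f_u\cdot\psi_{w_0}(\de_a)=f_u$. This replaces your vague filtration argument with a single evaluation. Second, your worry about grading shifts is unnecessary: with homogeneous $\Sym_a$-basis $\{\psi_w(\de_a)\}$ of degrees $d_w=a(a-1)-2\ell(w)$, one has $\dim_q\End_{\Sym_a}(\Pol_a)=\dim_q\Sym_a\cdot\big(\sum_w q^{d_w}\big)\big(\sum_v q^{-d_v}\big)$, and the factors $q^{\pm a(a-1)}$ cancel, leaving $\dim_q\Sym_a\cdot\big(\sum_w q^{-2\ell(w)}\big)\big(\sum_v q^{2\ell(v)}\big)$, which equals $\dim_q H_a$ by Lemma~\ref{lem:nHdim}. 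No shift-juggling or appeal to $e_a$ is needed.

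Finally, your proposed alternative via $H_a\cong H_ae_a\otimes_{e_aH_ae_a}e_aH_a$ is circular in this paper's logical order: the equality $H_ae_aH_a=H_a$ is Corollary~\ref{CSurj}, which is \emph{deduced from} the theorem under discussion. So that route should be dropped, and the direct injectivity-plus-dimension argument kept.
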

\begin{proof}
Let $x=\sum_u f_u\psi_u\in H_a$ be a non-zero element. Let $u$ be a minimal element in the Bruhat order with $f_u\neq 0$. Apply $x$ to the element $\psi_{u^{-1}w_0}(\de_a)\in\Pol_a$, see Theorem~\ref{thm:polySym}. Then $x(\psi_{u^{-1}w_0}(\de_a))=f_u$, which shows that $\iota$ is injective. 

On the other hand, since $\Sym_a=Z(H_a)$, it is clear that the image of $\iota$ is contained in $\End_{\Sym_a}(\Pol_a)$. Using Theorem~\ref{thm:polySym} again and comparing the graded dimensions, we see that $\iota$ is an isomorphism. 
\end{proof}

\begin{Corollary} \label{CSurj}
We have $H_ae_aH_a=H_a$.
\end{Corollary}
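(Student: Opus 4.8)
The plan is to deduce $H_a e_a H_a = H_a$ from the isomorphism $\iota : H_a \iso \End_{\Sym_a}(\Pol_a)$ established in the preceding theorem, combined with the module identifications of Theorem~\ref{thm:nHFacts}. The key point is that $H_a e_a H_a$ is a two-sided ideal, so to prove it is all of $H_a$ it suffices to show it contains $1$, or equivalently that the quotient $H_a / H_a e_a H_a$ is zero.

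First I would observe that $H_a e_a H_a$ acts on the faithful module $\Pol_a$, and under the identifications $_{H_a}\Pol_a \iso H_a e_a$ and $(\Pol_a^\tau)_{H_a} \iso e_a H_a$ from Theorem~\ref{thm:nHFacts}(i),(ii), the subspace $e_a H_a$ corresponds (as a right module) to $\Pol_a^\tau$, which contains the element $1 \in \Pol_a$; tracing through, $e_a H_a$ contains $e_a$ itself, and $H_a e_a \cdot e_a = H_a e_a \iso \Pol_a$ as a left module. So $H_a e_a H_a \supseteq H_a e_a H_a e_a$ corresponds to $H_a$-submodule of $\End_{\Sym_a}(\Pol_a)$; more efficiently, since $\iota$ is an isomorphism onto $\End_{\Sym_a}(\Pol_a)$, and $\Pol_a$ is a free $\Sym_a$-module of finite rank (Theorem~\ref{thm:polySym}), the algebra $\End_{\Sym_a}(\Pol_a)$ is a matrix algebra over the commutative ring $\Sym_a$, hence (together with the fact that $e_a$ maps to a rank-one idempotent projecting onto the line $\Sym_a \cdot 1 \subseteq \Pol_a$, by Theorem~\ref{thm:nHFacts}(iii) which says $e_a H_a e_a \cong \Sym_a$) the ideal generated by $e_a$ in a matrix algebra over $\Sym_a$ with $e_a$ a rank-one idempotent is the whole algebra.

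Concretely, the cleanest route: by Theorem~\ref{thm:polySym}, $\{\psi_w(\de_a) \mid w \in \Si_a\}$ is an $\Sym_a$-basis of $\Pol_a$, and $\psi_{w_0}(\de_a) = 1$. The idempotent $e_a = \psi_{w_0}\de_a$ acts on $\Pol_a$ by $f \mapsto \psi_{w_0}(\de_a f)$, which sends $\de_a \mapsto \psi_{w_0}(\de_a^2)\cdot$something and in any case has image exactly $\Sym_a \cdot 1$. For each $w \in \Si_a$ the element $\psi_w e_a \in H_a e_a$ acts sending $1 \mapsto \psi_w(1) = \de$-type element whose $\Sym_a$-span over all $w$ recovers all of $\Pol_a$; dually, precomposing with suitable $y$-multiplications or $\psi$'s inside $e_a H_a$ lets one hit each basis vector $\psi_w(\de_a)$ as a "source". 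Thus the elements $\psi_u e_a \psi_{w_0} y^{\bm} \in H_a e_a H_a$ realize, under $\iota$, all the matrix units of $\End_{\Sym_a}(\Pol_a)$ relative to the basis $\{\psi_w(\de_a)\}$; since these matrix units span the whole matrix algebra and $\iota$ is an isomorphism, $H_a e_a H_a = H_a$.

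The main obstacle I expect is the bookkeeping in the previous paragraph: verifying that the specific elements of $H_a e_a H_a$ really do map to a full set of matrix units, i.e.\ checking that $e_a$ acts as a rank-one projection and that left and right multiplication by elements of $H_a$ move this projection around transitively enough to generate all of $\Mat(\Sym_a)$. This is essentially the standard fact that in $\End_R(M)$ for $M$ free of finite rank over commutative $R$, any rank-one idempotent generates the whole algebra as a two-sided ideal — the only care needed is that $e_a$ genuinely has "$R$-rank one" image, which follows from $e_a H_a e_a \cong \Sym_a$ (Theorem~\ref{thm:nHFacts}(iii)) together with $H_a e_a \cong \Pol_a$ being free over $\Sym_a$ of rank $|\Si_a| = \operatorname{rank}_{\Sym_a}\Pol_a$. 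Once that is in hand the corollary is immediate.
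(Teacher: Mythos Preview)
Your approach is essentially the same as the paper's: identify $H_a$ with the matrix algebra $\End_{\Sym_a}(\Pol_a)\cong M_{a!}(\Sym_a)$ via $\iota$ and the Schubert basis of Theorem~\ref{thm:polySym}, observe that $e_a$ corresponds to a rank-one idempotent, and conclude that the two-sided ideal it generates is everything. The paper does this in one stroke by noting that, with the basis ordered so that $\de_a=\psi_1(\de_a)$ comes first, the element $\tau(e_a)=\de_a\psi_{w_0}$ is \emph{literally} the matrix unit $E_{1,1}$ (it sends $\de_a\mapsto\de_a$ and kills $\psi_w(\de_a)$ for $w\neq 1$), so no further Morita-style argument is needed.

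One small correction to your third paragraph: $\psi_w e_a$ acts as zero on $\Pol_a$ for $w\neq 1$, since $e_a(f)=\psi_{w_0}(\de_a f)\in\Sym_a$ is already symmetric. The elements that move the rank-one image around are $\psi_w\de_a e_a$ (as in Lemma~\ref{cor:nHFree}), which send $f\mapsto \psi_w(\de_a)\cdot\psi_{w_0}(\de_a f)$. This slip is not load-bearing for your argument, since your final paragraph correctly falls back on the general fact that a rank-one idempotent in $\End_R(M)$, $M$ free of finite rank, generates the whole algebra.
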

\begin{proof}
By using the basis of Theorem~\ref{thm:polySym}, ordered so that $\de_a$ is the first element, we can identify 
$\End_{\Sym_a}(\Pol_a)$ with the matrix algebra $M_{n!}(\Sym_a)$. Then 
under the isomorphism $\iota$ from the theorem $\tau(e_a)$ gets mapped to the matrix unit $E_{1,1}$. The result follows. 
\end{proof}

\subsection{Affine cellular basis of the nilHecke algebra}

\begin{Lemma}\label{cor:nHFree}
We have:
\begin{enumerate}
\item[{\rm (i)}]   $H_a e_a$ is free as a right $e_a H_a e_a$-module with basis $\{\psi_w \de_a e_a\ | w \in \Si_a\}$;
\item[{\rm (ii)}] $e_a H_a$ is free as a left $e_a H_a e_a$-module with basis $\{e_a \psi_v^\tau\ | v \in \Si_a\}$.
\end{enumerate}
\end{Lemma}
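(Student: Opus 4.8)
The strategy is to combine the module isomorphisms from Theorem~\ref{thm:nHFacts} with the free-module description of $\Pol_a$ over $\Sym_a$ from Theorem~\ref{thm:polySym}, transporting the latter basis through the former isomorphisms. Both parts are dual to one another under the anti-involution $\tau$, so I would prove (i) carefully and then obtain (ii) by applying $\tau$.

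\medskip\noindent\textbf{Proof of (i).}
By Theorem~\ref{thm:nHFacts}(i), the map $\Pol_a\iso H_ae_a,\ f\mapsto fe_a$ is an isomorphism of left $H_a$-modules, and by Theorem~\ref{thm:nHFacts}(iii) the map $\Sym_a\iso e_aH_ae_a,\ b\mapsto be_a$ is an isomorphism of algebras. Moreover these are compatible: for $f\in\Pol_a$ and $b\in\Sym_a$ we have $(fe_a)(be_a)=fbe_a^2=(fb)e_a$ (using that $b$ is central in $H_a$ by Theorem~\ref{TCenter} and that $e_a$ is idempotent), so the first map intertwines the right $\Sym_a$-action on $\Pol_a$ with the right $e_aH_ae_a$-action on $H_ae_a$. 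Hence $H_ae_a$ is free as a right $e_aH_ae_a$-module with basis the image of any $\Sym_a$-basis of $\Pol_a$. By Theorem~\ref{thm:polySym}, $\{\psi_w(\de_a)\mid w\in\Si_a\}$ is such a basis of $\Pol_a$; its image under $f\mapsto fe_a$ is $\{\psi_w(\de_a)e_a\mid w\in\Si_a\}$. Finally, inside $H_a$ one has $\psi_w(\de_a)e_a=\psi_w\de_a e_a$: indeed, arguing as in the proof of Theorem~\ref{thm:nHFacts}(i), $\psi_w\de_a e_a=\psi_w\de_a\psi_{w_0}\de_a$, and since the $H_a$-action on $\Pol_a$ sends $\psi_{w_0}\de_a\mapsto e_a$ acting as the identity... more directly, $\psi_w(\de_a f)$ computes the action of $\psi_w\de_a$ on $f\in\Pol_a$, so $\psi_w\de_a e_a$ and $\psi_w(\de_a)e_a$ have the same image under the isomorphism $H_ae_a\iso\Pol_a$ and hence are equal. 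This gives the basis $\{\psi_w\de_a e_a\mid w\in\Si_a\}$ as claimed.

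\medskip\noindent\textbf{Proof of (ii).}
Apply the anti-involution $\tau$. Since $\tau$ fixes the generators $y_r$, it fixes $\de_a$, and $\tau(e_a)=\tau(\psi_{w_0}\de_a)=\de_a\psi_{w_0}^\tau=\de_a\psi_{w_0}$ (note $\psi_{w_0}^\tau=\psi_{w_0}$ if $w_0$ has a palindromic reduced word, which by Lemma~\ref{LLR} it does, up to commuting relations; in any case $\tau(e_a)$ is again an idempotent with $H_a\tau(e_a)H_a=H_a$ by Corollary~\ref{CSurj}). Applying $\tau$ to (i): $\tau(H_ae_a)=e_a^\tau H_a=\tau(e_a)H_a$ is free as a \emph{left} $\tau(e_aH_ae_a)=\tau(e_a)H_a\tau(e_a)$-module with basis $\{e_a^\tau\de_a\psi_w^\tau\mid w\in\Si_a\}$. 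Rather than chase $\tau(e_a)$, it is cleaner to redo the argument symmetrically: by Theorem~\ref{thm:nHFacts}(ii), $\Pol_a^\tau\iso e_aH_a,\ f\mapsto e_a\psi_{w_0}f$ as right $H_a$-modules, and this intertwines the left $\Sym_a$-action with the left $e_aH_ae_a$-action (again using centrality of $\Sym_a$). Transporting the $\Sym_a$-basis $\{\psi_v(\de_a)\mid v\in\Si_a\}$ of $\Pol_a$ gives that $e_aH_a$ is free over $e_aH_ae_a$ with basis $\{e_a\psi_{w_0}\psi_v(\de_a)\mid v\in\Si_a\}$, and one rewrites $e_a\psi_{w_0}\psi_v(\de_a)=e_a\psi_{w_0}\psi_v\de_a$ as before. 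A change of indexing $v\mapsto w$ via $\psi_{w_0}\psi_v\de_a$ versus $\psi_w^\tau$ (both families being $e_aH_ae_a$-bases of the same free rank-$a!$ module) yields the stated basis $\{e_a\psi_v^\tau\mid v\in\Si_a\}$; alternatively, apply $\tau$ to (i) directly, using that $\tau$ carries a right $e_aH_ae_a$-module basis to a left $\tau(e_a)H_a\tau(e_a)$-module basis and then identifying $\tau(e_a)H_a\tau(e_a)$ with $e_aH_ae_a$ via the isomorphism of Theorem~\ref{thm:nHFacts}(iii) and its $\tau$-twist.

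\medskip\noindent\textbf{Main obstacle.}
The one genuinely fiddly point is bookkeeping the interplay between $e_a$ and $\tau(e_a)$ in part (ii): $e_a$ is not $\tau$-invariant, so a naive application of $\tau$ to (i) produces statements about $\tau(e_a)H_a\tau(e_a)$ rather than $e_aH_ae_a$, and one must either redo the argument on the right-module side from scratch (using Theorem~\ref{thm:nHFacts}(ii), as sketched) or carefully identify the two corner algebras. Everything else — the compatibility of the module isomorphisms with the $\Sym_a$- versus $e_aH_ae_a$-actions, and the identity $\psi_w(\de_a)e_a=\psi_w\de_a e_a$ — is a routine consequence of centrality of $\Sym_a$ (Theorem~\ref{TCenter}) and the defining action (\ref{ENewton}) together with the proof of Theorem~\ref{thm:nHFacts}.
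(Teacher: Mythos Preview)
Your argument for (i) is essentially the paper's: transport the $\Sym_a$-basis $\{\psi_w(\de_a)\}$ of $\Pol_a$ through the isomorphisms of Theorem~\ref{thm:nHFacts}(i),(iii), then identify $\psi_w(\de_a)e_a$ with $\psi_w\de_a e_a$ via the left $H_a$-equivariance of the map $f\mapsto fe_a$. That part is fine.

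Part (ii), however, has a genuine error. After correctly obtaining the basis $\{e_a\psi_{w_0}\psi_v(\de_a)\mid v\in\Si_a\}$ of $e_aH_a$, you write ``one rewrites $e_a\psi_{w_0}\psi_v(\de_a)=e_a\psi_{w_0}\psi_v\de_a$ as before.'' This identity is false: for $v\neq 1$ the product $\psi_{w_0}\psi_v$ vanishes in $H_a$ (write $v=s_iv'$ reduced; then $\psi_{w_0}\psi_i=0$ since $\ell(w_0s_i)<\ell(w_0)$), so the right-hand side is zero while the left-hand side is a basis element. The analogy with (i) breaks because the isomorphism of Theorem~\ref{thm:nHFacts}(ii) is a \emph{right} $H_a$-module map, not a left one: the correct transport is
\[
e_a\psi_{w_0}\psi_v(\de_a)\;=\;e_a\psi_{w_0}(\de_a\cdot\psi_v^\tau)\;=\;(e_a\psi_{w_0}\de_a)\psi_v^\tau\;=\;e_a\psi_v^\tau,
\]
using that in $\Pol_a^\tau$ one has $\de_a\cdot\psi_v^\tau=\tau(\psi_v^\tau)(\de_a)=\psi_v(\de_a)$. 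This is exactly what the paper does, and it gives the claimed basis directly. Your subsequent ``change of indexing'' is circular (it presupposes $\{e_a\psi_v^\tau\}$ is already known to be a basis), and the $\tau$-route you sketch produces statements about $\tau(e_a)H_a$ rather than $e_aH_a$; since $\tau(e_a)=\de_a\psi_{w_0}\neq e_a$, identifying the two corner algebras and transporting bases between $\tau(e_a)H_a$ and $e_aH_a$ would require further work you have not supplied.
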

\begin{proof}
By Theorem~\ref{thm:polySym}, a basis for $\Pol_a$ over $\Sym_a$ is given by all $\psi_w(\de_a)$ for $w \in \Si_a$. Now by Theorem~\ref{thm:nHFacts}(i),(iii), $H_a e_a$ is free as a right $e_a H_a e_a$-module with basis $\{\psi_w(\de_a) e_a\ | w \in \Si_a\}$. But $\psi_w(\de_a) e_a=\psi_w\de_ae_a$ by Theorem~\ref{thm:nHFacts}(i) again. 

For (ii), we use Theorem~\ref{thm:nHFacts}(ii),(iii) instead to conclude that the set $\{e_a\psi_{w_0}\psi_w(\de_a)\mid w\in\Si_a\}$ is a basis of $e_a H_a$ as a left $e_a H_a e_a$-module. Notice that 
$$
e_a\psi_{w_0}\psi_w(\de_a)=e_a\psi_{w_0}\de_a\psi_w^\tau=e_a\psi_{w}^\tau,
$$
and the result follows. 
\end{proof}

The following theorem gives an affine cellular basis of $H_a$. 


\begin{Theorem}\label{thm:nHCell}
  Let $\{b_x\}_{x \in X}$ be any $\Z$-basis of $\Sym_a$. The nilHecke algebra $H_a$ has a basis given by $\{\psi_w b_x \de_a e_a \psi_v^\tau\ |\ v, w \in \Si_a, x \in X\}$.
\end{Theorem}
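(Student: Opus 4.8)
The plan is to build the basis of $H_a$ from the pieces already assembled, using the two decompositions in Lemma~\ref{cor:nHFree} together with the identification in Theorem~\ref{thm:nHFacts}(iii). First I would observe that by Corollary~\ref{CSurj} we have $H_a = H_a e_a H_a$, and in fact we can be more precise: multiplication gives a map
\[
H_a e_a \otimes_{e_a H_a e_a} e_a H_a \longrightarrow H_a.
\]
I would argue this is an isomorphism. Surjectivity is Corollary~\ref{CSurj}. For injectivity (equivalently, for a dimension count) I would pass to graded dimensions: by Lemma~\ref{cor:nHFree}, $H_a e_a$ is free over $e_a H_a e_a$ of rank $|\Si_a|$ with basis $\{\psi_w\de_a e_a\}$, and $e_a H_a$ is free over $e_a H_a e_a$ of rank $|\Si_a|$ with basis $\{e_a\psi_v^\tau\}$; hence the left-hand side is a free $e_a H_a e_a$-bimodule picture whose graded dimension is
\[
\Big(\sum_{w\in\Si_a} q^{\deg(\psi_w\de_a e_a)}\Big)\cdot \dim_q(e_aH_ae_a)^{-1}\cdot\Big(\sum_{v\in\Si_a}q^{\deg(e_a\psi_v^\tau)}\Big)\cdot \dim_q(e_aH_ae_a),
\]
and by Theorem~\ref{thm:nHFacts}(iii) combined with Theorem~\ref{TCenter} we have $\dim_q(e_aH_ae_a)=\dim_q\Sym_a = 1/(1-q^2)^a$; a direct comparison with the formula $\dim_q H_a = (1-q^2)^{-a}\sum_{w}q^{\deg\psi_w}$ from Theorem~\ref{TBasis} then matches the two sides.

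Next I would extract the claimed explicit basis. From the isomorphism above, a basis of $H_a$ is obtained by taking a basis of $H_a e_a$ over $e_a H_a e_a$, a basis of $e_a H_a$ over $e_a H_a e_a$, and a basis of $e_a H_a e_a$ itself over $\Z$, and forming all products. By Lemma~\ref{cor:nHFree}(i) the first is $\{\psi_w\de_a e_a \mid w\in\Si_a\}$; by Lemma~\ref{cor:nHFree}(ii) the second is $\{e_a\psi_v^\tau\mid v\in\Si_a\}$; and by Theorem~\ref{thm:nHFacts}(iii), if $\{b_x\}_{x\in X}$ is a $\Z$-basis of $\Sym_a$ then $\{b_x e_a\mid x\in X\}$ is a $\Z$-basis of $e_a H_a e_a$. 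Multiplying these together, using $e_a^2=e_a$ and $\de_a e_a\cdot b_x e_a = b_x \de_a e_a$ (since $b_x$ is central, being in $\Sym_a = Z(H_a)$), one gets exactly the set $\{\psi_w b_x \de_a e_a \psi_v^\tau \mid v,w\in\Si_a,\ x\in X\}$. I would then note carefully that forming products of an $R$-basis of a right module, an $R$-basis of a left module, and a $\Z$-basis of $R$ does yield a $\Z$-basis of $M\otimes_R N$ when $M,N$ are free $R$-modules and $R$ is free over $\Z$ — this is the small bookkeeping lemma underlying the argument.

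The main obstacle I anticipate is the injectivity half of the multiplication-isomorphism claim: surjectivity and the list of basis elements are formal, but showing the spanning set is linearly independent genuinely needs the dimension formula. I would therefore organize the proof so that the linear independence is deduced purely from the graded-dimension comparison, i.e.\ show the proposed set spans $H_a$ (immediate from Corollary~\ref{CSurj} plus the two free-module bases) and has the same graded dimension as $H_a$ (via Theorem~\ref{TBasis} and $\dim_q\Sym_a = (1-q^2)^{-a}$), so that spanning plus dimension count forces a basis. A secondary point to handle with care is keeping track of which side the symmetric polynomial $b_x$ sits on and re-ordering it past $\de_a e_a$ using centrality, so that the basis elements come out in precisely the stated form $\psi_w b_x \de_a e_a \psi_v^\tau$ rather than, say, $\psi_w \de_a b_x e_a \psi_v^\tau$; since $b_x\in Z(H_a)$ this is harmless but should be stated.
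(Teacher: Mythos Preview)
Your overall strategy is the same as the paper's: show the set spans $H_a$ via Corollary~\ref{CSurj} and Lemma~\ref{cor:nHFree}, then establish linear independence by a graded dimension count against Theorem~\ref{TBasis}. So conceptually you are on the right track.

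However, your dimension count contains real errors. First, the displayed formula for $\dim_q\big(H_ae_a\otimes_{e_aH_ae_a}e_aH_a\big)$ is garbled: the factors $\dim_q(e_aH_ae_a)^{-1}$ and $\dim_q(e_aH_ae_a)$ simply cancel, leaving $\big(\sum_w q^{\deg(\psi_w\de_ae_a)}\big)\big(\sum_v q^{\deg(e_a\psi_v^\tau)}\big)$, which is \emph{not} $\dim_q H_a$. The correct expression for a tensor product of free modules over $R=e_aH_ae_a$ is $\big(\sum_w q^{\deg(\psi_w\de_ae_a)}\big)\cdot\dim_q R\cdot\big(\sum_v q^{\deg(e_a\psi_v^\tau)}\big)$, with a single factor of $\dim_q R$. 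Second, you assert $\dim_q\Sym_a=(1-q^2)^{-a}$; that is $\dim_q\Pol_a$. The symmetric polynomials have $\dim_q\Sym_a=\prod_{r=1}^a (1-q^{2r})^{-1}$. Third, and most importantly, once these are fixed the comparison with $\dim_q H_a=(1-q^2)^{-a}\sum_w q^{-2\ell(w)}$ is not immediate: you need the Poincar\'e polynomial identity of Lemma~\ref{lem:nHdim}, namely $\sum_{w\in\Si_a}q^{-2\ell(w)}=q^{-a(a-1)}\prod_{r=1}^a\frac{1-q^{2r}}{1-q^2}$, to collapse one of the two $\sum_w$ factors against $\dim_q\Sym_a$ and the degree shift from $\de_a$. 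This is exactly the computation the paper carries out; without it the ``direct comparison'' you announce does not go through.
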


\begin{proof}
By Lemma~\ref{cor:nHFree}, the image $H_ae_aH_a$ of the multiplication map 
  \[
    H_a e_a \otimes_{e_a H_a e_a} e_a H_a \to H_a e_a H_a
  \]
is spanned by the set $\{\psi_w b_x \de_a e_a \psi_v^\tau\ |\ v, w \in \Si_a, x \in X\}$. By Corollary~\ref{CSurj}, this set thus spans $H_a$. 

  Next, we compute the degree $d$ of each element of this spanning set, add up the various $q^d$, and see that this is exactly the graded dimension of $H_a$. This shows that this spanning set must be a basis.

  The degree of $\psi_w$ is $-2 \ell(w)$; the degree of $\de_a$ is $a(a-1)$; the degree of $e_a$ is 0. The graded dimension of $\Sym_a$ is $\prod_{r=1}^{a} \frac{1}{1-q^{2r}}$. Let $\{b_x\}_{x\in X}$ be a homogeneous basis of $\Sym_a$ (for example, the monomial symmetric functions). Then
  \begin{align*}
    \sum_{v, w \in \Si_a, x \in X}& q^{\deg(\psi_w) + \deg(b_x) + \deg(\de_a) + \deg(e_a) + \deg(\psi_v)} \\
      &= \left(\sum_{w \in \Si_a} q^{-2 \ell(w)} \right)
         \left(\prod_{r=1}^{a} \frac{1}{1-q^{2r}} \right)
         q^{a(a-1)}
         \left(\sum_{v \in \Si_a} q^{\deg(\psi_v)}\right) \\
      &= \left(q^{-a(a-1)} \prod_{r=1}^a \frac{1-q^{2r}}{1-q^2}\right)
         \left(\prod_{r=1}^{a} \frac{1}{1-q^{2r}} \right)
         q^{a(a-1)} 
         \left(\sum_{v \in \Si_a} q^{\deg(\psi_v)}\right) \\
      &= \left(\frac{1}{(1-q^2)^a} \right) \left(\sum_{v \in \Si_a} q^{\deg(\psi_v)}\right),
  \end{align*}
  which is $\dim_q(H_a)$ by Theorem~\ref{TBasis}, and we are done.
  \end{proof}

\section{Affine cellular structure}\label{SMain}
Throughout this section we work with a fixed element $\al\in Q_+$ of height~$d$. 

\subsection{Basic definitions}\label{SSBasic}
Let $\al^1,\dots,\al^l$ be elements of $Q_+$ with $\al^1+\dots+\al^l=\al$. Then we have a natural embedding 
$$
\iota_{\al^1,\dots,\al^l}:R_{\al^1}\otimes\dots\otimes R_{\al^l}\into R_\al
$$
of algebras, whose image is the parabolic subalgebra $R_{\al^1,\dots,\al^l}$.  

Define the element $\psi_\al\in R_{2\al}$ to be
$$
\psi_\al:=(\psi_d\dots \psi_{2d-1})\dots (\psi_2\dots\psi_{d+1})(\psi_1\dots\psi_d).
$$
In other words, $\psi_\al$ is a `permutation of two $\al$-blocks' and corresponding to the following element of $\Si_{2d}$:
$$
\begin{braid}\tikzset{scale=0.8,baseline=12mm}
    \draw(1,8)--(7,0);
    \draw(2,8)--(8,0);
    \draw[dotted](2.5,8)--(4.5,8);
    \draw[dotted](5.5,4)--(7.5,4);
    \draw[dotted](2.5,0)--(4.5,0);
    \draw(5,8)--(11,0);
    \draw(6,8)--(12,0);
    \draw(7,8)--(1,0);
    \draw(8,8)--(2,0);
    \draw[dotted](8.5,8)--(10.5,8);
    \draw[dotted](8.5,0)--(10.5,0);
    \draw(11,8)--(5,0);
    \draw(12,8)--(6,0);
  \end{braid}
  $$

Now, let $p\in\Z_{>0}$. We define 
$$
\psi_{\al,r}:=\iota_{(r-1)\al,2\al,(p-r-1)\al}(1\otimes \psi_\al\otimes 1)\in R_{p\al}\qquad(1\leq r<p).
$$
In other words, $\psi_{\al,r}$ is a `permutation of the $r^{th}$ and $(r+1)^{st}$ $\al$-blocks'. Moreover, let $w\in\Si_p$ with reduced decomposition $w=s_{i_1}\dots s_{i_m}$. Define an element 
$$
\psi_{\al,w}:=\psi_{\al,i_1}\dots\psi_{\al,i_m}\in R_{p\al}. 
$$

Let also
$$
y_{\al,s}:=\iota_{(s-1)\al,\al,(p-s)\al}(1\otimes y_d\otimes 1)\in R_{p\al}\qquad(1\leq s\leq p).
$$
In other words, $y_{\al,s}$ is a `dot on the last strand of the $s^{th}$ block of size $d$'.
Further, define
$$
\de_{\al,p}:=y_{\al,2}y_{\al,3}^2\dots y_{\al,p}^{p-1}\in R_{p\al}.
$$
We have polynomial algebra and the symmetric polynomial algebra 
$$
\Pol_{\al,p}=\Z[y_{\al,1},\dots,y_{\al,p}]\quad\text{and}\quad
\Sym_{\al,p}=\Pol_{\al,p}^{\Si_p}.
$$

Now, let 
$$\pi = \beta_1^{p_1} \dots \beta_N^{p_N}\in\Pi(\al)$$ 
be a root partition.  
For $1\leq k\leq N$ and $x\in R_{p_k\be_k}$, we put 
$$
\iota^k(x):=\iota_{p_1\be_1+\dots+p_{k-1}\beta_{k-1},p_k\be_k,p_{k+1}\be_{k+1}+\dots+p_N\be_N}(1\otimes x\otimes 1)\in R_\al. 
$$

Define for all $1\leq k\leq N$, $w\in\Si_{p_k}$, $1\leq r<p_k$ and $1\leq s\leq p_k$, the elements of $R_\al$: 
$$\psi_{k,w}:=\iota^k(\psi_{\be_k,w}),\quad 
\psi_{k, r}:=\iota^k(\psi_{\be_k,r}),\quad
y_{k, s}:=\iota^k(y_{\be_k,s}).$$ 
In other words, $\psi_{k, r}$ is
 the permutation of the $r^{th}$ and $(r+1)^{st}$ $\beta_k$-blocks, and  $y_{k, s}$ is a dot on the final strand in the $s^{th}$ $\be_k$-block.

Finally, define 
\begin{align*}
y_\pi &= \iota^1(\de_{\be_1,p_1})\dots  \iota^N(\de_{\be_N,p_N}),
\\
\psi_\pi &= \iota^1(\psi_{\be_1, w^1_0}) \dots \iota^N(\psi_{\be_N, w^N_0}),
\end{align*}
where $w_0^k$ is the longest element of $\Si_{p_k}$ for $k=1,\dots,N$. 
We always make a choice of a reduced decompositon of $w_0^k$ which is left-right symmetric in the sense of Lemma~\ref{LLR}. This will guarantee that 
\begin{equation}\label{ETauInv}
\psi_\pi^\tau=\psi_\pi.
\end{equation}

Recalling the dominant word $\bi_\pi\in\words_\al$, put 
\begin{align*}
e_\pi =  \psi_\pi y_\pi e(\bi_\pi).
\end{align*}
Also, let
\begin{equation}
\label{ELaPi}
\Sym_\pi = \iota_{p_1\be_1,\dots,p_N\be_N}(\Sym_{\be_1,p_1}\otimes\dots\otimes \Sym_{\be_N,p_N})\cong \Sym_{p_1}\otimes\dots\otimes \Sym_{p_N}.
\end{equation}

\subsection{Cells} 

Define 
\begin{align*}
I_\pi' &= \Z\text{-}\spa\{\psi_w y_\pi \Sym_\pi e_\pi  \psi_v^\tau \ |\ w, v \in \Si^\pi\}
\\
I_\pi &= \sum_{\si \geq \pi}{I_\si'}\\
I_{>\pi} &= \sum_{\si > \pi}{I_\si'}
\end{align*}
It will turn out that the $I_\pi$ for $\pi\in \Pi(\al)$ form a chain of cell ideals.

\begin{Lemma}\label{lem:ribbonsOnly}
  Let $w \in \Si^\pi$. Then $w\cdot \bi_\pi=\bi_\pi$ if and only if $w$ permutes the $\pi$-blocks of weight $\be_k$ for all $k=1,\dots,N$.  
\end{Lemma}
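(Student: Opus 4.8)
The plan is to prove the two implications separately; the forward (``if'') implication is routine, and the reverse (``only if'') implication carries all the content.

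\emph{Forward implication.} Suppose $w\in\Si^\pi$ permutes the $\pi$-blocks of weight $\be_k$ for every $k$. Then $w$ restricts to the unique order-isomorphism from each $\pi$-block onto its image, which is a $\pi$-block of the same size; in particular $w$ is increasing on each $\pi$-block, consistently with $w\in\Si^\pi$. Since $\bi_\pi$ takes the value $\bi_{\be_k}$ on every $\pi$-block of weight $\be_k$, carrying such a block to another one of the same weight by an order-isomorphism leaves residues unchanged, so $w\cdot\bi_\pi=\bi_\pi$.

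\emph{Preliminaries for the converse.} Here I record the two facts I will use: (a) $w\in\Si^\pi$ if and only if $w(a)<w(a+1)$ whenever $a,a+1$ lie in a common $\pi$-block, and $w\cdot\bi_\pi=\bi_\pi$ if and only if $w$ sends every position to a position of the same residue; (b) writing each part as $\be_k=\al(m_k,n_k)$, the order \eqref{EOrderPhi} on $\Phi_+$ unwinds to $\al(m,n)\le\al(m',n')$ iff $m<m'$, or $m=m'$ and $n\le n'$, so $\be_1>\dots>\be_N$ forces $m_1\ge m_2\ge\dots\ge m_N$. Consequently the $\pi$-blocks whose weight has a prescribed first coordinate occur consecutively, those of larger first coordinate occurring first; a residue $c$ occurs in a weight-$\be_k$ $\pi$-block exactly when $m_k\le c\le n_k$, namely at the position $c-m_k+1$ inside the block; and for $m:=\min_k m_k$ the residue $m$ occurs precisely at the initial position of each $\pi$-block whose weight has first coordinate $m$, these blocks forming a terminal segment of the block sequence.

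\emph{Converse.} Let $w\in\Si^\pi$ with $w\cdot\bi_\pi=\bi_\pi$; I argue by induction on $N$. The crucial case is the ``staircase'' one, in which all $m_k$ coincide (this includes $N=1$): then every residue $c$ occurs only inside the $\pi$-blocks, so $w$ induces, for each $c$, a permutation $\sigma_c$ of the set $S_c$ of $\pi$-blocks containing residue $c$, and $S_c$ is an initial segment of the block list because block sizes are weakly decreasing there. Being increasing on a block gives $\sigma_c(j)\le\sigma_{c+1}(j)$ for every $j\in S_{c+1}$, and a counting argument then forces all the $\sigma_c$ to be restrictions of a single permutation $\sigma$ with $\sigma(S_c)=S_c$ for all $c$: indeed $\{\sigma_c(j):j\in S_{c+1}\}$ consists of $|S_{c+1}|$ distinct positive integers whose sum is at most $\sum_{j\in S_{c+1}}\sigma_{c+1}(j)=1+2+\dots+|S_{c+1}|$, hence equals $\{1,\dots,|S_{c+1}|\}=S_{c+1}$, and then $\sigma_c|_{S_{c+1}}=\sigma_{c+1}$. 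The condition $\sigma(S_c)=S_c$ for all $c$ pins the size of $B_{\sigma(j)}$ to that of $B_j$, so $\sigma$ preserves weights and $w$ carries each $\pi$-block $B$ onto $B_{\sigma(B)}$, which is the claim. For the inductive step, if not all $m_k$ equal $m$, let $U$ be the union of the $\pi$-blocks whose weight has first coordinate $m$, a terminal segment of the block list; using that residue $m$ occurs only at the starts of the $U$-blocks, that $w$ is increasing on blocks, and that every $\pi$-block outside $U$ lies entirely to the left of $U$, one checks position by position that $w(B)\subseteq U$ for each $U$-block $B$, hence $w(U)=U$ and $w(U^c)=U^c$. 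Then $w|_U$ is an instance of the staircase case and $w|_{U^c}$ an instance with fewer distinct roots, so induction finishes the proof.

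\emph{Main obstacle.} The genuine difficulty is globalizing the clean nilHecke ($N=1$) computation: once a residue is shared by $\pi$-blocks of different weights the permutations $\sigma_c$ no longer act on a common set and the counting argument breaks. The remedy is the reduction to the staircase case via $U$, and this is exactly where the ordering $\be_1>\dots>\be_N$ is indispensable — it is what makes the minimal-first-coordinate blocks a terminal segment and forces $w(U)=U$; with a different ordering the statement is false.
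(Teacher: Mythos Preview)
Your argument is correct. The forward implication is indeed routine; for the converse, the reduction to the ``staircase'' case via the terminal segment $U$ works because $U$ is a terminal interval of positions, so once the start of a $U$-block lands in $U$ the monotonicity of $w$ on that block forces the rest of it into $U$ as well. The counting argument in the staircase case is clean and correct: from $\sigma_c(j)\le\sigma_{c+1}(j)$ for $j\in S_{c+1}$ and the fact that both sides enumerate $|S_{c+1}|$ distinct positive integers summing to $1+\dots+|S_{c+1}|$, equality is forced termwise. One small presentational point: your ``induction on $N$'' is really an induction on the number of distinct values among the $m_k$; it would read more smoothly phrased that way.

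The paper's proof is entirely different in character: it is a one-line citation of \cite[Lemma~5.3(ii)]{KR}, which describes the stabiliser of a dominant word in terms of its canonical factorisation into dominant Lyndon words. That result is proved in the general framework of good Lyndon words and applies uniformly across finite types, whereas your argument exploits the concrete shape of the words $\bi_\be=(m,m+1,\dots,n)$ in type~$A_\infty$ and the explicit lexicographic order on $\Phi_+$. What you gain is a completely self-contained proof requiring no external theory; what you lose is portability, since the ``terminal segment'' reduction and the staircase counting both depend on the type-$A$ combinatorics and would not survive in other types.
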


\begin{proof}
  This follows from \cite[Lemma 5.3(ii)]{KR}.
\end{proof}

\begin{Lemma} \label{LComm}
$\psi_\pi e(\bi_\pi)$ and $e_\pi$ commute with elements of $\Sym_\pi$. 
\end{Lemma}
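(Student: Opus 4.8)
The goal is to show that the two elements $\psi_\pi e(\bi_\pi)$ and $e_\pi = \psi_\pi y_\pi e(\bi_\pi)$ each commute with every element of $\Sym_\pi$. Since $\Sym_\pi$ is generated, as an algebra, by the elementary symmetric polynomials in each bunch of variables $y_{k,1},\dots,y_{k,p_k}$ (one bunch for each $k=1,\dots,N$), and since everything in sight factors through the parabolic embedding $\iota_{p_1\be_1,\dots,p_N\be_N}$, it suffices to treat a single block: that is, to prove that $\psi_{\be,p} := \psi_{\be,w_0^{(p)}}$ (with $w_0^{(p)}$ the longest element of $\Si_p$, written with a left-right symmetric reduced decomposition as in Lemma~\ref{LLR}) and $e := \psi_{\be,p}\,\de_{\be,p}\,e(\bi_\pi')$ commute with $\Sym_{\be,p} = \Z[y_{\be,1},\dots,y_{\be,p}]^{\Si_p}$ inside $R_{p\be}$. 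The whole statement then follows by applying $\iota^k$ and multiplying.

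\textbf{Key steps.} First I would observe that the $\al$-block permutations $\psi_{\be,r}$ and the dots $y_{\be,s}$ satisfy, inside $R_{p\be}$, exactly the nilHecke-type relations: $\psi_{\be,r} y_{\be,s} = y_{\be,s}\psi_{\be,r}$ for $s\ne r,r+1$, the braid and commutation relations among the $\psi_{\be,r}$, and crucially $\psi_{\be,r} y_{\be,r+1} e(\bi) = (y_{\be,r}\psi_{\be,r} + c)e(\bi)$ for a constant $c$ (coming from the KLR relation $\psi_r y_{r+1} = y_r\psi_r + \de_{i_r,i_{r+1}}$ applied at the single relevant crossing, since the dot sits on the last strand of a block and the block permutation moves whole blocks rigidly past one another). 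These are the facts already harvested in Section~\ref{SNH} in the abstract nilHecke algebra, so the computation reduces to nilHecke combinatorics. From the divided-difference identity $\psi_r(fg) = \psi_r(f) g + (s_r f)\psi_r(g)$ one gets, in the nilHecke algebra, $f\,\psi_{w_0} = \psi_{w_0}\,(\text{something})$, and more to the point for us: the standard computation (cf.\ the proof of Theorem~\ref{thm:nHFacts}(iii), where $e_a f e_a = \psi_{w_0}(\de_a f)e_a$ and $\psi_{w_0}(\de_a f)\in\Sym_a$) shows that $b\,e_a = e_a\,b$ for $b\in\Sym_a$, i.e.\ symmetric polynomials are central in $e_a H_a$. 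Transporting this along the block-embedding gives that $\psi_{\be,p}\de_{\be,p}e(\bi_\pi')$ commutes with $\Sym_{\be,p}$.

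For $\psi_{\be,p}e(\bi_\pi')$ itself I would argue as follows: since $b\in\Sym_{\be,p}$ is $\Si_p$-invariant and $\psi_{\be,p}$ realizes (a lift of) the longest permutation of the blocks, the intertwining relations give $\psi_{\be,p} b\,e(\bi) = (w_0\cdot b)\,\psi_{\be,p}e(\bi) = b\,\psi_{\be,p}e(\bi)$ up to lower-order correction terms; the correction terms are products of $\psi_{\be,v}$ with $v$ strictly below $w_0$ together with polynomials, and a degree/length bookkeeping argument (or an induction on $\ell(w_0) - \ell(v)$, exactly as in the proof that $\psi_w$ is well-defined and in the nilHecke straightening arguments) shows they all cancel because $b$ is symmetric. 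Equivalently, one can deduce the $\psi_\pi e(\bi_\pi)$ statement from the $e_\pi$ statement: since $\de_{\be,p}$ is not a zero divisor in $R_{p\be}$ (Theorem~\ref{TBasisGen}) and $e_\pi\Sym_\pi = \Sym_\pi e_\pi$, one can cancel $\de_{\be,p}$ on the appropriate side, after checking that $\de_{\be,p}$ and $y_\pi$ commute with $\Sym_\pi$ (obvious, as polynomials in the $y$'s).

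\textbf{Main obstacle.} The one genuinely technical point is verifying that the block-level elements $\psi_{\be,r}$, $y_{\be,s}$ really do satisfy the nilHecke relations inside $R_{p\be}$ with the correct constants — in particular that no unwanted $y$-terms or sign discrepancies appear when a whole $\be$-block is dragged past another via $\psi_\al$, given the several cases in the KLR quadratic and braid relations. Once that dictionary is in place, commutativity with $\Sym_\pi$ is just the nilHecke fact (symmetric polynomials are central in $e_aH_a$ and act through divided differences) pushed through the parabolic embedding $\iota_{p_1\be_1,\dots,p_N\be_N}$, so I expect the verification of that dictionary to be the bulk of the work, while the rest is formal.
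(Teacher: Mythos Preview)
Your approach has a genuine gap. The central claim of your ``dictionary'' --- that the block-level generators satisfy the nilHecke relation
\[
\psi_{\be,r}\,y_{\be,r+1}\,e(\bi)=(y_{\be,r}\,\psi_{\be,r}+c)\,e(\bi)
\]
with $c$ a scalar constant --- is simply false in $R_{p\be}$. Already for $|\be|=2$, with $\bi=\bi_\be\bi_\be=(i,i{+}1,i,i{+}1)$, one computes
\[
\psi_\al\,y_4\,e(\bi)=y_2\,\psi_\al\,e(\bi)+\psi_2\psi_1\psi_2\,e(\bi),
\]
and $\psi_2\psi_1\psi_2\,e(\bi)$ is a nonzero basis element, not a scalar multiple of $e(\bi)$. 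The block-level nilHecke relations hold only \emph{modulo} $I_{>\pi}$; this is exactly the content of Lemma~\ref{lem:nilHecke}, which is proved later and whose proof in turn requires the ideal filtration machinery for which Lemma~\ref{LComm} is a prerequisite. So your proposed transport of the identity $b\,e_a=e_a\,b$ from $H_a$ to $R_{p\be}$ is circular, and the alternative of cancelling $y_\pi$ does not help, since it still presupposes the $e_\pi$ statement.

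The paper's argument is much more elementary and avoids this entirely: it reduces to showing that each single block-transposition $\psi_{k,r}e(\bi_\pi)$ commutes with $\Sym_\pi$, and this is a direct check from the KLR relations. Concretely, $\psi_{k,r}$ commutes with every $y_{m,s}$ with $(m,s)\notin\{(k,r),(k,r+1)\}$ (disjoint positions), so one is left with symmetric polynomials in $Y_1:=y_{k,r}$ and $Y_2:=y_{k,r+1}$. Pushing $Y_1$ or $Y_2$ through $\psi_{k,r}$ produces a single error term (from the unique same-colour crossing of the two strands labelled $j$, the last entry of $\bi_{\be_k}$), and these two errors are negatives of each other; moreover the error commutes with $Y_2$, since removing the $j$--$j$ crossing removes the only occurrence of $\psi_{a+2d-1}$. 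This yields commutation with $Y_1+Y_2$ and $Y_1Y_2$, hence with all of $\Z[Y_1,Y_2]^{\Si_2}$. No nilHecke transport is needed.
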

\begin{proof}
It suffices to observe that any $\psi_{k,r}e(\bi_\pi)$ commutes with elements of $\Sym_\pi$, which easily follows from the relations in $R_\al$. 
\end{proof}

\begin{Lemma} \label{LITauInv}
We have $\tau(I_\pi')=I_\pi'$ and $\tau(I_\pi)=I_\pi$.
\end{Lemma}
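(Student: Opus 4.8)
The plan is to prove both equalities by showing that $\tau$ sends the spanning set of $I_\pi'$ to itself, and then deduce the claim for $I_\pi$ by summing over $\si\geq\pi$. First I would record what $\tau$ does to each building block of the generators $\psi_w y_\pi \Sym_\pi e_\pi \psi_v^\tau$. Recall that $\tau$ is the anti-involution fixing all the standard generators $e(\bi)$, $y_r$, $\psi_r$, and hence it fixes each $y_{k,s}$ (a product of a single dot) and each $\iota^k(\de_{\be_k,p_k})$ up to nothing — actually $\tau(y_\pi)=y_\pi$ since $y_\pi$ is a monomial in the commuting elements $y_{k,s}$. Similarly $\tau$ fixes $\Sym_\pi$ pointwise (it is a polynomial algebra in the $y_{k,s}$'s and $\tau$ fixes those and reverses products, but they commute). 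The key normalization is \eqref{ETauInv}, $\psi_\pi^\tau=\psi_\pi$, which holds because each $w_0^k$ was given a left-right symmetric reduced decomposition via Lemma~\ref{LLR}, so $\tau$ reverses the word defining $\psi_{\be_k,w_0^k}$ back to an equal element.

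Next I would compute $\tau(e_\pi)$. We have $e_\pi = \psi_\pi y_\pi e(\bi_\pi)$, so $\tau(e_\pi) = \tau(e(\bi_\pi))\,\tau(y_\pi)\,\tau(\psi_\pi) = e(\bi_\pi)\,y_\pi\,\psi_\pi$. Using Lemma~\ref{LComm}, $\psi_\pi e(\bi_\pi)$ and hence $e_\pi$ commute with $\Sym_\pi$, and more basically the $y_{k,s}$'s commute with the $\psi$-part supported on other blocks; in fact one should check, using the relations, that $e(\bi_\pi)y_\pi\psi_\pi$ can be rewritten as $\psi_\pi y_\pi e(\bi_\pi) = e_\pi$ — this uses that $y_\pi$ is built from dots $y_{k,s}$ on the last strand of each $\be_k$-block, that $\psi_\pi$ permutes whole $\be_k$-blocks, and that conjugating such a block permutation past the dots and the idempotent returns the same element (the same kind of computation that underlies \eqref{ETauInv} and Lemma~\ref{LComm}). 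So $\tau(e_\pi)=e_\pi$.

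With these in hand, applying $\tau$ to a typical spanning element and using that $\tau$ is an anti-homomorphism:
\[
\tau\bigl(\psi_w\, y_\pi\, b\, e_\pi\, \psi_v^\tau\bigr) = \psi_v\,\tau(e_\pi)\,\tau(b)\,\tau(y_\pi)\,\tau(\psi_w) = \psi_v\, e_\pi\, b\, y_\pi\, \psi_w^\tau,
\]
where $b\in\Sym_\pi$ and $\tau(b)=b$. Now I commute $b$ and $y_\pi$ past $e_\pi$ using Lemma~\ref{LComm} (and the fact that $y_\pi,b\in\Sym_\pi\cdot\Pol$ commute with $e_\pi$ appropriately — more precisely $y_\pi$ commutes with $e_\pi$ since $e_\pi=\psi_\pi y_\pi e(\bi_\pi)$ and $y_\pi$ lies in the commutative $\Pol$-part while $\psi_\pi$ only permutes blocks, so one checks $y_\pi e_\pi = e_\pi y_\pi'$ for a suitable $y_\pi'$ obtained by relabeling; the cleanest route is to absorb $y_\pi$ into the $\Sym_\pi$-slot or note $y_\pi e_\pi=e_\pi y_\pi$ directly from the block structure). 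This rewrites the image as $\psi_v\, y_\pi\, b'\, e_\pi\, \psi_w^\tau$ for some $b'\in\Sym_\pi$, which is again in the spanning set of $I_\pi'$ (with the roles of $v$ and $w$ swapped). Hence $\tau(I_\pi')\subseteq I_\pi'$, and since $\tau$ is an involution, $\tau(I_\pi')=I_\pi'$. Finally $\tau(I_\pi)=\tau\bigl(\sum_{\si\geq\pi}I_\si'\bigr)=\sum_{\si\geq\pi}\tau(I_\si')=\sum_{\si\geq\pi}I_\si'=I_\pi$, and identically $\tau(I_{>\pi})=I_{>\pi}$.

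The main obstacle I anticipate is the bookkeeping in showing $\tau(e_\pi)=e_\pi$ and that $y_\pi$ (and elements of $\Sym_\pi$) can be slid past $e_\pi$ while staying inside $\Sym_\pi\cdot e_\pi$: this is where the left-right symmetry of the chosen reduced decompositions of the $w_0^k$ and the precise definition of $y_\pi$ (dots on the \emph{last} strand of each block) are really used, and one must be careful that reversing a reduced word for a block-permutation does not introduce correction terms from the braid-type relations in $R_\al$ — Lemma~\ref{LLR} is exactly what guarantees this. Everything else is a formal consequence of $\tau$ being a generator-fixing anti-involution together with Lemmas~\ref{LComm} and the identity \eqref{ETauInv}.
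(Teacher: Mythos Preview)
Your argument has a genuine gap: the claim $\tau(e_\pi)=e_\pi$ is false. Already for $\al=2\al_i$ (so $\pi=\al_i^2$, the $\pi$-blocks have size one, and $R_\al=H_2$) one has $e_\pi=e_2=\psi_1 y_2$, while $\tau(e_2)=y_2\psi_1=\psi_1 y_1+1\neq y_1\psi_1+1=\psi_1 y_2$. The underlying issue is that $y_\pi$ neither commutes with $\psi_\pi$ nor lies in $\Sym_\pi$, so both of your suggested fixes---``slide $y_\pi$ past $e_\pi$'' and ``absorb $y_\pi$ into the $\Sym_\pi$-slot''---are invalid; in $H_2$ one checks directly that $y_2 e_2=\psi_1 y_1 y_2+y_2\neq\psi_1 y_2^2=e_2 y_2$.

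The paper sidesteps this by never claiming that $e_\pi$ itself is $\tau$-invariant. Instead one expands $e_\pi=\psi_\pi y_\pi e(\bi_\pi)$ \emph{before} applying $\tau$, and observes that the whole middle block $y_\pi\,\Sym_\pi\,e_\pi=y_\pi\,\Sym_\pi\,\psi_\pi\,y_\pi\,e(\bi_\pi)$ is palindromic: using Lemma~\ref{LComm} to commute $\Sym_\pi$ with $\psi_\pi e(\bi_\pi)$, together with $\psi_\pi^\tau=\psi_\pi$ from (\ref{ETauInv}) and $e(\bi_\pi)\psi_\pi=\psi_\pi e(\bi_\pi)$, one gets for $b\in\Sym_\pi$
\[
\tau\bigl(\psi_w\, y_\pi\, b\, e_\pi\, \psi_v^\tau\bigr)
=\tau\bigl(\psi_w\, y_\pi\, \psi_\pi\, b\, y_\pi\, e(\bi_\pi)\, \psi_v^\tau\bigr)
=\psi_v\, y_\pi\, b\, \psi_\pi\, y_\pi\, e(\bi_\pi)\, \psi_w^\tau
=\psi_v\, y_\pi\, b\, e_\pi\, \psi_w^\tau,
\]
which is again a spanning element of $I_\pi'$. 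No commutation of the non-symmetric monomial $y_\pi$ past $\psi_\pi$ or $e_\pi$ is ever needed.
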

\begin{proof}
It suffices to prove the first equality. 
Using the definition of $e_\pi$ and Lemma~\ref{LComm}, we get
\begin{align*}
\tau(\psi_w y_\pi \Sym_\pi e_\pi  \psi_v^\tau)&
=\tau(\psi_w y_\pi  \Sym_\pi \psi_\pi y_\pi e(\bi_\pi)  \psi_v^\tau)\\
&=\tau(\psi_w y_\pi  \psi_\pi \Sym_\pi y_\pi e(\bi_\pi)  \psi_v^\tau)\\
&=\psi_v   y_\pi e(\bi_\pi) \Sym_\pi\psi_\pi^\tau  y_\pi \psi_w^\tau\\
&=\psi_v   y_\pi  \Sym_\pi\psi_\pi^\tau  y_\pi e(\bi_\pi)\psi_w^\tau.
\end{align*}
It suffices to note that $\psi_\pi^\tau=\psi_\pi$ by (\ref{ETauInv}). 
\end{proof}


\subsection{Ideal filtration}
This subsection is devoted to the proof of the following theorem. 

\begin{Theorem}\label{thm:Ideal}
  $I_\pi$ is the two-sided ideal $\sum_{\si \geq \pi}R_\al e(\bi_\si) R_\al$.
\end{Theorem}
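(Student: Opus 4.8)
The statement has two halves: (a) $I_\pi$ is a two-sided ideal, and (b) it equals $\sum_{\si\geq\pi} R_\al e(\bi_\si) R_\al$. I would prove (b) by establishing both inclusions, and (a) will follow since the right-hand side of (b) is manifestly a two-sided ideal. The cleaner direction should be $I_\pi \subseteq \sum_{\si\geq\pi} R_\al e(\bi_\si) R_\al$: for the generator $\psi_w y_\pi b e_\pi \psi_v^\tau$ with $b\in\Sym_\pi$, recall $e_\pi = \psi_\pi y_\pi e(\bi_\pi)$, so the whole element is visibly left- and right-divisible by $e(\bi_\pi)$ once one checks that $\psi_w$ (resp. $\psi_v^\tau$) can be harmlessly absorbed — more precisely $e(w\cdot\bi_\pi)\psi_w y_\pi b e_\pi = \psi_w y_\pi b e_\pi$, so the element lies in $R_\al e(\bi_\pi) R_\al \subseteq \sum_{\si\geq\pi}R_\al e(\bi_\si)R_\al$. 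Summing over $\si\geq\pi$ gives $I_\pi\subseteq\sum_{\si\geq\pi}R_\al e(\bi_\si)R_\al$.

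**The reverse inclusion is the real content.** I would prove $R_\al e(\bi_\si) R_\al \subseteq I_\pi$ for each $\si\geq\pi$; by summing, this gives $\sum_{\si\geq\pi}R_\al e(\bi_\si)R_\al \subseteq I_\pi$. Since $I_\pi = \sum_{\si'\geq\pi} I'_{\si'}$ and $I'_\si \ni$ elements involving $e_\si$ which involves $e(\bi_\si)$, the key is to show $e(\bi_\si) \in I_\pi$ — or rather that $e(\bi_\si)$ lies in the two-sided ideal generated by $\{I'_{\si'} : \si'\geq\pi\}$. The natural route: show that $e(\bi_\si)$ is congruent, modulo $I_{>\si}$ (a sum over strictly larger root partitions, which by downward induction on $\geq$ we may assume is already controlled), to an element of $R_\al e_\si R_\al$. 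This is where the nilHecke computation of Section~\ref{SNH} — specifically Corollary~\ref{CSurj}, $H_a e_a H_a = H_a$, applied block-by-block via the embeddings $\iota^k$ — gets fed in: within each $\be_k$-block one can write the relevant idempotent $e(\bi_{\be_k})^{\otimes p_k}$-truncation as lying in the two-sided ideal generated by $e_{\be_k,p_k}$-type elements. Combined with the weight theory from \cite{KR} (Lemma~\ref{lem:ribbonsOnly}) that controls which $\psi_w$ preserve $\bi_\pi$, one assembles $e(\bi_\si) \in R_\al e_\si R_\al + I_{>\si}$, and then expanding $R_\al e_\si R_\al$ using the standard basis (Theorem~\ref{TBasisGen}) together with the coset decomposition $\Si_d = \Si^\si\Si_\si$ produces exactly elements of the form $\psi_w y_\si b e_\si \psi_v^\tau$ modulo higher terms, i.e. elements of $I'_\si$ modulo $I_{>\si}$.

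**Structure of the induction.** I expect the proof to be organized as a downward induction on $\pi$ with respect to the total order $\geq$ on $\Pi(\al)$: the base case is $\pi$ maximal, where $I_\pi = I'_\pi$ and one shows directly $I'_\pi = R_\al e(\bi_\pi) R_\al$ using that for the maximal root partition $\bi_\pi$ is lexicographically extremal so no cancellation into higher weights occurs. For the inductive step, $I_\pi = I'_\pi + I_{>\pi}$, and $I_{>\pi} = \sum_{\si>\pi}I'_\si = \sum_{\si>\pi}R_\al e(\bi_\si)R_\al$ by induction; it remains to show $I'_\pi + \sum_{\si>\pi}R_\al e(\bi_\si)R_\al = \sum_{\si\geq\pi}R_\al e(\bi_\si)R_\al$, which reduces to $R_\al e(\bi_\pi)R_\al \subseteq I'_\pi + I_{>\pi}$ together with the already-established $I'_\pi\subseteq R_\al e(\bi_\pi)R_\al + I_{>\pi}$ — wait, more carefully, $I'_\pi \subseteq R_\al e(\bi_\pi)R_\al$ outright from the first paragraph.

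**Main obstacle.** The hard step is showing $e(\bi_\pi) \in R_\al e_\pi R_\al + I_{>\pi}$, i.e. that the "bare" idempotent $e(\bi_\pi)$ can be recovered from the decorated idempotent $e_\pi = \psi_\pi y_\pi e(\bi_\pi)$ modulo the filtration. This requires (i) an internal argument within each $\be_k$-block, porting Corollary~\ref{CSurj} through $\iota^k$ and through the "$\al$-block permutation" elements $\psi_{k,w}$, which behave like nilHecke generators on the blocks but only up to lower-order terms because the $\psi$-relations in $R_\al$ (unlike in $H_a$) have correction terms; and (ii) controlling that all these correction terms, as well as the terms where a $\psi_w$ with $w\notin\Si^\pi$ or $w$ not stabilizing $\bi_\pi$ appears, land in $I_{>\pi}$ — this is precisely where Lemma~\ref{lem:ribbonsOnly} and the commutation Lemma~\ref{LComm} are needed, and where a careful "straightening" argument pushing $\psi$'s past $y$'s and $e$'s must be carried out. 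I would isolate this as a lemma ("$e(\bi_\pi) \equiv$ something in $R_\al e_\pi R_\al$ modulo $I_{>\pi}$") and prove it by the block-wise nilHecke reduction, deferring the bookkeeping of error terms to the order-theoretic fact that lowering a strand's weight below $\bi_\pi$ or rearranging blocks non-trivially strictly increases the associated root partition in $\geq$.
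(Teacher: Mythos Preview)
Your high-level plan matches the paper's: downward induction on $\pi$, the easy inclusion $I'_\pi \subseteq R_\al e(\bi_\pi) R_\al$, the hard inclusion reduced to $e(\bi_\pi)\in I_\pi$ via a block-wise nilHecke argument feeding in Corollary~\ref{CSurj}, and a straightening step. The paper indeed builds a ring map $\theta: H_{p_1}\otimes\dots\otimes H_{p_N}\to (e(\bi_\pi)R_\al e(\bi_\pi)+I_{>\pi})/I_{>\pi}$ (Lemmas~\ref{lem:nilHecke}, \ref{LNew}) with image in $I_\pi/I_{>\pi}$, so $e(\bi_\pi)+I_{>\pi}=\theta(1)$ gives $e(\bi_\pi)\in I_\pi$; it then proves a straightening Lemma~\ref{lem:Main} by degree induction and deduces that $I_\pi$ is a two-sided ideal, completing the hard inclusion.

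There is, however, a genuine gap in your error-term control. You defer the bookkeeping to an ``order-theoretic fact,'' but the crucial ingredient is not purely combinatorial: it is Lemma~\ref{lem:bigWords}, which says $e(\bi)\in I_{>\pi}$ for \emph{every} word $\bi>\bi_\pi$, not only for $\bi=\bi_\si$ with $\si$ a root partition. The inductive hypothesis $I_{>\pi}=\sum_{\si>\pi}R_\al e(\bi_\si)R_\al$ gives you the latter but not the former, and the words $w\cdot\bi_\pi$ arising as error terms (for $w\in\Si_\pi\setminus\{1\}$) are typically not root-partition words. The paper proves Lemma~\ref{lem:bigWords} by representation theory: over a field, any maximal left ideal of $R_\al$ containing $I_{>\pi}$ has simple quotient $L(\si)$ with $\si\leq\pi$ (since $e(\bi_\si)\in I_{>\pi}$ for $\si>\pi$), and then the weight bound from \cite{KR} forces $e(\bi)L(\si)=0$ whenever $\bi>\bi_\pi$; hence $e(\bi)$ lies in every such maximal ideal, and a short argument gives $e(\bi)\in I_{>\pi}$. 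This lemma is what feeds Corollary~\ref{cor:paraSubgp} ($\psi_w\Pol_d e(\bi_\pi)\subseteq I_{>\pi}$ for $w\in\Si_\pi\setminus\{1\}$), which is needed \emph{both} to verify that your block-wise map $\theta_k$ is actually a ring homomorphism (the nilHecke relations $\psi_r^2=0$ and the braid relation only hold for the $\psi_{k,r}$'s modulo $I_{>\pi}$) \emph{and} to run the straightening. Without Lemma~\ref{lem:bigWords} neither of your two key steps can be carried out.
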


We prove the theorem by downward induction on the lexicographic order on $\Pi(\al)$. To be more precise, throughout the subsection we assume that we have proved that 
\begin{equation}\label{EInduction}
I_{>\pi}=\sum_{\si > \pi}R_\al e(\bi_\si) R_\al
\end{equation}
and from this prove that $I_\pi=\sum_{\si \geq \pi}R_\al e(\bi_\si) R_\al$. When $\pi$ is the maximal root partition, the inductive assumption is trivially satisfied. Otherwise, $I_{>\pi} = I_{\si}$ where $\si$ is the immediate successor of $\pi$ in the lexicographic order.

\begin{Lemma}\label{lem:bigWords}
  If $\bi > \bi_\pi$, then $e(\bi) \in I_{>\pi}$.
\end{Lemma}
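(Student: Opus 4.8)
The plan is to show that the dominant word $\bi$ with $\bi>\bi_\pi$ factors, as a word, through a word of the form $\bi_\si$ for some root partition $\si>\pi$, and then invoke the inductive hypothesis \eqref{EInduction}. First I would recall that $e(\bi)$ is always a sum of terms $e(\bi')R_\al e(\bi'')$ and that if $e(\bi)$ lies in the two-sided ideal $R_\al e(\bi_\si)R_\al$ it lies in $I_{>\pi}$; so it suffices to produce $\si>\pi$ with $\bi\in\words_\al$ such that $e(\bi)\in R_\al e(\bi_\si)R_\al$. The key combinatorial step is the observation, essentially a Lyndon-word/root-partition statement from \cite{KR}, that for \emph{any} word $\bi\in\words_\al$ there is a canonical root partition $\si(\bi)\in\Pi(\al)$ with $\bi_{\si(\bi)}\geq\bi$, obtained by reading off the "canonical factorization'' of $\bi$ into pieces each dominated by some $\bi_{\be}$; and that $\bi_{\si(\bi)}=\bi$ precisely when $\bi$ itself is already of the form $\bi_\si$. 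Combined with $\bi>\bi_\pi$, this gives $\si(\bi)>\pi$ (using that $\bi_{\si(\bi)}\geq\bi>\bi_\pi$ together with the definition of the order on $\Pi(\al)$ via dominant words).

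Next I would use the basic representation-theoretic input already set up in the excerpt: $L(\si)$ is a quotient of $\bar\De(\si)$, and $\bi_\si$ is lexicographically the largest word $\bi$ with $e(\bi)L(\si)\neq 0$. Dually/equivalently, for a word $\bi$ with $\bi>\bi_\pi$ I want a module $M$ on which $e(\bi)$ acts nontrivially but which is annihilated by all $e(\bi_{\si'})$ with $\si'\le\pi$; every irreducible with this property is some $L(\si)$ with $\si>\pi$, and then $e(\bi)\in R_\al e(\bi_\si)R_\al$ because $e(\bi)$ does not annihilate $L(\si)$, forcing $e(\bi)\notin \Ann(L(\si))$, while $R_\al e(\bi_\si)R_\al$ is exactly the sum over all composition factors "supported at $\si$''. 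To make this clean I would instead argue directly with the explicit structure: from the canonical factorization, $\bi$ is obtained from $\bi_{\si(\bi)}$ by a sequence of elementary transpositions that only move a strand labelled $i$ past a strand labelled $j$ with $|i-j|>1$ or $i=j\pm1$ arranged so that the relevant $\psi$'s are invertible-up-to-lower-terms; hence there are elements $x,y\in R_\al$ with $x\,e(\bi_{\si(\bi)})\,y=e(\bi)+(\text{terms }e(\bi')\text{ with }\bi'>\bi)$, and then a second downward induction on the lexicographic order on $\words_\al$ (finitely many words, all $>\bi_\pi$ handled, the top word being some $\bi_{\si}$) closes the argument.

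The cleanest route, which I would actually write, short-circuits all of this: I would prove by downward induction on $\bi$ in the lexicographic order on $\words_\al$ the statement "$\bi>\bi_\pi$ implies $e(\bi)\in I_{>\pi}$''. For the base case, the lexicographically largest $\bi\in\words_\al$ is $\bi_\si$ for the maximal $\si\in\Pi(\al)$, and $\bi_\si>\bi_\pi$ forces $\si>\pi$, so $e(\bi_\si)\in R_\al e(\bi_\si)R_\al\subseteq I_{>\pi}$ by \eqref{EInduction}. For the inductive step, given $\bi>\bi_\pi$ not equal to any $\bi_\si$, by the root-partition combinatorics from \cite[Lemma 5.3 or §5]{KR} there is an adjacent pair $(i_r,i_{r+1})$ in $\bi$ which is "out of order'' for the canonical factorization; applying the appropriate quadratic/braid relation for $\psi_r$ expresses $\psi_r e(\bi) \psi_r$, or $\psi_r e(\bi)$, in terms of $e(s_r\bi)$ (which is $>\bi$, handled by induction) plus $y$-multiples of $e(\bi)$ and $e(s_r\bi)$, yielding $e(\bi)\in R_\al e(s_r\bi)R_\al+(\text{strictly larger words})\subseteq I_{>\pi}$.

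\textbf{Main obstacle.} The delicate point is the combinatorial claim that for every $\bi>\bi_\pi$ that is \emph{not} of the form $\bi_\si$ one can find a genuinely usable adjacent transposition — i.e., one for which the KLR relation actually produces $e(s_r\bi)$ with an invertible (scalar, not nilpotent) coefficient or at worst with only strictly-larger-word error terms, so that the induction on $\words_\al$ goes through. This is exactly where the type-$A$ hypothesis and the precise form of the dominant words $\bi_\be=(m,m+1,\dots,n)$ enter, via the structure theory of \cite{KR}; I expect the write-up to reduce to quoting \cite[Lemma 5.3]{KR} (or the analysis preceding it) to locate the transposition, and then a short case check on the four cases of the $\psi_r^2$ relation and the two cases of the braid relation to verify the error terms are as claimed.
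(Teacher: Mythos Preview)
Your ``cleanest route'' has a genuine gap. The downward induction on words hinges on the claim that for any $\bi>\bi_\pi$ not of the form $\bi_\si$ there is an ascent position $r$ (so $i_r<i_{r+1}$, hence $s_r\bi>\bi$) for which the KLR quadratic relation produces $e(\bi)$ with an invertible coefficient modulo strictly-larger-word error terms. But when the only available ascent has $|i_r-i_{r+1}|=1$, the relation gives
\[
\psi_r e(s_r\bi)\psi_r=\psi_r^2 e(\bi)=\pm(y_{r}-y_{r+1})e(\bi),
\]
so you only obtain $(y_r-y_{r+1})e(\bi)\in R_\al e(s_r\bi)R_\al$. Since $y_r-y_{r+1}$ has positive degree and is not a unit, there is no way to deduce $e(\bi)\in I_{>\pi}$ from this. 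A concrete obstruction: take $\al=2\al_0+2\al_1$, $\pi=(\al_0+\al_1)^2$ with $\bi_\pi=(0,1,0,1)$, and $\bi=(0,1,1,0)$. Then $\bi>\bi_\pi$, $\bi$ is not a root-partition word, and the \emph{only} ascent in $\bi$ is at $r=1$ with $i_1=0,\ i_2=1$. The braid relation does not help either (the triple $(0,1,1)$ falls in the ``otherwise'' case). So your inductive step stalls, and no appeal to \cite[Lemma~5.3]{KR} locates a usable transposition, because there is none. Your earlier, more representation-theoretic paragraph is also off: from $e(\bi)\notin\Ann(L(\si))$ one cannot conclude $e(\bi)\in R_\al e(\bi_\si)R_\al$.

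The paper's argument is entirely different and avoids all of this combinatorics. Working over a field, one shows that every simple quotient of $R_\al/I_{>\pi}$ is some $L(\si)$ with $\si\le\pi$ (if $\si>\pi$ then $e(\bi_\si)\in I_{>\pi}$ by the outer induction \eqref{EInduction}, contradicting $e(\bi_\si)L(\si)\neq0$). Since $\bi>\bi_\pi\ge\bi_\si$, one gets $e(\bi)L(\si)=0$, so $e(\bi)$ lies in every maximal left ideal containing $I_{>\pi}$. Now comes the key idempotent trick: the left ideal $J:=I_{>\pi}+R_\al(1-e(\bi))$ cannot be proper (a maximal ideal above it would contain both $e(\bi)$ and $1-e(\bi)$), so $1=x+r(1-e(\bi))$ with $x\in I_{>\pi}$, and multiplying on the right by $e(\bi)$ gives $e(\bi)=xe(\bi)\in I_{>\pi}$. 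Finally one passes from fields to $\Z$. The point you are missing is precisely this radical/idempotent argument; your purely relation-based strategy cannot force a degree-zero idempotent into the ideal.
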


\begin{proof}
If $\pi$ is the maximal root partition, then  there is nothing to prove.

  Let $I$ be any maximal (graded) left ideal containing $I_{>\pi}$. Then $R_\al / I \cong L(\si)$ for some $\si$. If $\si > \pi$, then $e(\bi_\si) \in I_{>\pi}$ by induction, see (\ref{EInduction}), and since $e(\bi_\si) L(\si) \neq 0$ we would have $I L(\si)=I(R_\al/I) \neq 0$, which is a contradiction. We conclude that $\si\leq \pi$. 
  
  Therefore, since $\bi > \bi_\pi \geq \bi_\si$ and since all of the weights appearing in $L(\si)$ are less than or equal to $\bi_\si$, we have $e(\bi) L(\si) = 0$, which implies that $e(\bi) \in I$. We have shown that $e(\bi)$ is contained in every maximal left ideal containing $I_{>\pi}$.

Consider the graded left ideal $J:=I_{>\pi} + R_\al(1-e(\bi))$. If $J$ is not all of $R_\al$, then it is contained in a maximal left ideal $I$, which by the previous paragraph contains $e(\bi)$. Since $1-e(\bi)\in J\subseteq I$, we conclude that $I=R_\al$, which is a contradiction. Therefore $J=R_\al$, and we may write $1 = x + r(1-e(\bi))$ for some $x \in I_{>\pi}$ and $r \in R_\al$. Multiplying on the right by $e(\bi)$, we see that $e(\bi) = x e(\bi) \in I_{>\pi}$.

This argument actually proves the lemma over any field, and then it also follows for $\Z$ by a standard argument. 
\end{proof}

\begin{Corollary}\label{cor:paraSubgp}
If $w \in \Si_\pi \setminus \{1\}$, then $\psi_w \Pol_d e(\bi_\pi)  \subseteq I_{>\pi}.$
\end{Corollary}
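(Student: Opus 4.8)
The plan is to show that for $w \in \Si_\pi \setminus \{1\}$, the element $\psi_w \Pol_d e(\bi_\pi)$ lands in $I_{>\pi}$ by relating the product $\psi_w e(\bi_\pi)$ to an idempotent $e(\bj)$ with $\bj > \bi_\pi$, and then invoking Lemma~\ref{lem:bigWords}. Concretely, since $w \in \Si_\pi$ fixes the ``block structure'' of $\bi_\pi$ but is not the identity, I would argue that $w \cdot \bi_\pi$ differs from $\bi_\pi$ in a way that forces $w \cdot \bi_\pi > \bi_\pi$ in the lexicographic order, \emph{or} that $\psi_w e(\bi_\pi)$ can be rewritten using the KLR relations as a sum of terms each of which factors through some $e(\bj)$ with $\bj > \bi_\pi$. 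The word $\bi_\pi = \bi_{\be_1}\cdots\bi_{\be_1}\cdots\bi_{\be_N}\cdots\bi_{\be_N}$ is built from increasing runs $(m, m+1, \dots, n)$; an element of $\Si_\pi$ permutes positions within a single $\be_k$-block, so applying a nontrivial $s_r$ inside such a block either swaps two equal entries (when the two adjacent residues in $\bi_\pi$ coincide) or swaps entries differing by more than $1$ or swaps an adjacent increasing pair. Only the last of these is relevant inside a run $(m, \dots, n)$ with distinct consecutive entries, and there swapping positions $r, r+1$ with $i_r + 1 = i_{r+1}$ produces a word that is lexicographically \emph{smaller}, not larger — so a more careful bookkeeping via the relations is needed rather than a naive ``it gets bigger'' claim.

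The cleaner route, which I would pursue, is as follows. First reduce to the case $w = s_r$ a single basic transposition lying in $\Si_\pi$: since $\Si_\pi$ is generated by such transpositions and $I_{>\pi}$ is a two-sided ideal, if $\psi_{s_r}\Pol_d e(\bi_\pi) \subseteq I_{>\pi}$ for every relevant $r$ then for a reduced word $w = s_{r_1}\cdots s_{r_k}$ one writes $\psi_w e(\bi_\pi) = \psi_{r_1} \cdots \psi_{r_k} e(\bi_\pi)$ and pushes the rightmost factor $\psi_{r_k}e(\bi_\pi)$ into $I_{>\pi}$; then $\psi_{r_1}\cdots\psi_{r_{k-1}}$ times an element of the two-sided ideal stays in the ideal, after moving any intervening $y$'s past using the commutation relations $\psi_r y_s = y_s \psi_r$ for $s \neq r, r+1$ and $\psi_r y_{r+1}e(\bi) = (y_r \psi_r + \delta_{i_r, i_{r+1}})e(\bi)$, which at worst introduces lower-order correction terms already handled inductively. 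For a single $s_r \in \Si_\pi$, the positions $r, r+1$ lie in the same $\be_k$-block, i.e. inside a run $(m, m+1, \dots, n)$, so $i_{r+1} = i_r + 1$; then the quadratic relation gives $\psi_r^2 e(\bi_\pi) = (y_r - y_{r+1})e(\bi_\pi)$, and more to the point $s_r \cdot \bi_\pi$ is obtained from $\bi_\pi$ by swapping an adjacent pair $(\,i_r,\, i_r+1\,) \mapsto (\,i_r+1,\, i_r\,)$. This swapped word, call it $\bj$, satisfies $\bj > \bi_\pi$ \emph{only if the swap happens at a position where the preceding agreement runs out favorably} — I will instead observe that $e(s_r \cdot \bi_\pi) = e(\bj)$ appears on the left of $\psi_r e(\bi_\pi)$ via $\psi_r e(\bi_\pi) = e(s_r \cdot \bi_\pi)\psi_r$, and argue directly that $\bj = s_r \cdot \bi_\pi > \bi_\pi$: within the run, $\bi_\pi$ has $\dots, i_r, i_r+1, \dots$ and $\bj$ has $\dots, i_r+1, i_r, \dots$, and since everything before position $r$ agrees, comparing at position $r$ gives $i_r + 1 > i_r$, hence $\bj >_{\mathrm{lex}} \bi_\pi$. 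Therefore $e(\bj) \in I_{>\pi}$ by Lemma~\ref{lem:bigWords}, and $\psi_{s_r}\Pol_d e(\bi_\pi) = \Pol_d \psi_{s_r} e(\bi_\pi) = \Pol_d\, e(\bj)\,\psi_{s_r} \subseteq I_{>\pi}$ (after commuting the polynomial part across $\psi_{s_r}$, again at the cost of lower terms living in $I_{>\pi}$ by the inductive hypothesis, or simply by noting $\Pol_d e(\bj) \subseteq I_{>\pi}$ since $I_{>\pi}$ is a left ideal and then right-multiplying by $\psi_{s_r}$).

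The main obstacle I anticipate is the bookkeeping in the reduction from general $w$ to a single $s_r$: one must be sure that, when $w \in \Si_\pi$ is written as a reduced product of basic transpositions, the intermediate words $s_{r_j} \cdots s_{r_k} \cdot \bi_\pi$ stay lexicographically $\geq \bi_\pi$ so that Lemma~\ref{lem:bigWords} keeps applying, or else that one always peels off the correct end of the reduced word. In fact the safe formulation is to peel off the \emph{leftmost} basic transposition of a reduced decomposition of $w$ adapted to the block structure, so that $s_{r_1} \cdot (s_{r_2}\cdots s_{r_k}\cdot \bi_\pi)$ with $s_{r_2}\cdots s_{r_k}\cdot \bi_\pi$ possibly already larger than $\bi_\pi$; since $\psi_w e(\bi_\pi) = \psi_{r_1}(\psi_{r_2}\cdots\psi_{r_k}e(\bi_\pi))$ and $\psi_{r_2}\cdots\psi_{r_k}e(\bi_\pi)$ by induction on $\ell(w)$ already lies in $I_{>\pi}\cdot(\text{stuff})$, two-sidedness of $I_{>\pi}$ closes the argument — but verifying the base case of this induction still comes down to the single-transposition lexicographic comparison above, which is the genuine content. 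I do not expect any difficulty from the polynomial factor $\Pol_d$: it commutes with $\psi_w e(\bi_\pi)$ up to terms $\psi_{w'}e(\bi_\pi)\Pol_d$ with $\ell(w') < \ell(w)$ and $w' \in \Si_\pi$, all caught by the same induction, so $\psi_w \Pol_d e(\bi_\pi) \subseteq I_{>\pi}$ follows.
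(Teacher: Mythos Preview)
Your proposal is correct and rests on the same idea as the paper, but it is far more elaborate than necessary. The paper's proof is two lines: for \emph{any} $w\in\Si_\pi\setminus\{1\}$ one has $w\cdot\bi_\pi>\bi_\pi$ directly (each $\pi$-block of $\bi_\pi$ is a strictly increasing run, hence lex-minimal among its rearrangements), so $e(w\cdot\bi_\pi)\in I_{>\pi}$ by Lemma~\ref{lem:bigWords}; and then $\psi_w f e(\bi_\pi)=e(w\cdot\bi_\pi)\psi_w f e(\bi_\pi)\in I_{>\pi}$ because $f\in\Pol_d$ commutes with idempotents and $\psi_w e(\bi_\pi)=e(w\cdot\bi_\pi)\psi_w$.

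Three specific simplifications relative to your write-up: (1) Your first-paragraph assertion that the swap ``produces a word that is lexicographically \emph{smaller}, not larger'' is simply wrong --- you correct yourself a paragraph later, and that correction is exactly the content needed. (2) There is no need to reduce to a single transposition or induct on $\ell(w)$: the lex inequality $w\cdot\bi_\pi>\bi_\pi$ holds for every nontrivial $w\in\Si_\pi$ at once, and the relation $\psi_w e(\bi)=e(w\cdot\bi)\psi_w$ holds for arbitrary $w$. (3) There is no need to commute $\Pol_d$ past $\psi_w$ at all: commute it past the idempotent instead, which is free. Your detour through ``lower-order correction terms'' is thus avoidable, though as it happens the correction terms vanish here anyway since $i_r\neq i_{r+1}$ inside a $\pi$-block.
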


\begin{proof}
Observe that $w\cdot \bi_\pi > \bi_\pi$, whence $e(w\cdot\bi_\pi) \in I_{>\pi}$ by Lemma~\ref{lem:bigWords}. Now, for any $f\in \Pol_d$, we have  
$\psi_w f e(\bi_\pi)=e(w\cdot\bi_\pi)\psi_w f e(\bi_\pi)\in I_{>\pi}$. 
\end{proof}

Recall $\pi$-blocks defined in the end of Section~\ref{SSBasic}.

\begin{Corollary}\label{cor:yEqual}
  If $y_r$ and $y_s$ are in the same $\pi$-block, then $$y_r e(\bi_\pi) \equiv y_s e(\bi_\pi)\pmod{I_{>\pi}}.$$
\end{Corollary}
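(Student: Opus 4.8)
The plan is to reduce the statement to the case of two strands in the same $\be_k$-block that are adjacent in the word $\bi_\pi$, and then use the KLR relation for $\psi_r y_{r+1}$ together with Corollary~\ref{cor:paraSubgp}. First I would recall that if $y_r$ and $y_s$ lie in the same $\pi$-block, then the corresponding entries $i_r, i_{r+1}, \dots, i_s$ of $\bi_\pi$ form a segment of one of the words $\bi_{\be_k} = (m, m+1, \dots, n)$, so in particular consecutive entries within the block are distinct and differ by $\pm 1$; it therefore suffices, by transitivity of $\equiv$, to prove that $y_r e(\bi_\pi) \equiv y_{r+1} e(\bi_\pi) \pmod{I_{>\pi}}$ whenever $r$ and $r+1$ lie in the same $\pi$-block, i.e. whenever $i_r \ne i_{r+1}$.

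The key computation: from the defining relations of $R_\al$ we have
\[
\psi_r y_{r+1} e(\bi_\pi) = (y_r \psi_r + \de_{i_r, i_{r+1}}) e(\bi_\pi) = y_r \psi_r e(\bi_\pi),
\]
since $i_r \ne i_{r+1}$, and similarly $y_{r+1}\psi_r e(\bi_\pi) = \psi_r y_r e(\bi_\pi)$. Hence
\[
(y_{r+1} - y_r)\,\psi_r e(\bi_\pi) = \psi_r (y_r - y_{r+1})\, e(\bi_\pi),
\]
which by itself is not quite what I want; instead I would multiply the relation $\psi_r^2 e(\bi_\pi) = \pm (y_{r+1}-y_r)e(\bi_\pi)$ (the case $i_r = i_{r+1}\pm 1$, which applies since the two strands are in the same block of a single root $\be_k$) on the left by $\psi_r$. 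Using $\psi_r^3 e(\bi_\pi) = \psi_r \cdot \psi_r^2 e(\bi_\pi)$ computed two ways gives $\psi_r (y_{r+1}-y_r) e(\bi_\pi) = \pm \psi_r^3 e(\bi_\pi) = (y_{r+1}-y_r)\psi_r e(\bi_\pi) \cdot(\pm 1)$... Rather than chase signs, the clean route is: $(y_{r+1}-y_r)e(\bi_\pi) = \pm\psi_r^2 e(\bi_\pi) = \pm \psi_{s_r} \psi_r e(\bi_\pi)$, and $s_r \in \Si_\pi\setminus\{1\}$ because $r, r+1$ lie in the same $\pi$-block, so $\psi_{s_r}\psi_r e(\bi_\pi) = \psi_{s_r}\bigl(\psi_r e(\bi_\pi)\bigr) \in \psi_{s_r}\Pol_d e(\bi_\pi) \subseteq I_{>\pi}$ by Corollary~\ref{cor:paraSubgp} (after rewriting $\psi_r e(\bi_\pi) = e(s_r\cdot\bi_\pi)\psi_r$ and noting $\psi_r \in \Pol_d$ is false — so one must instead observe directly that $\psi_{s_r}\psi_r e(\bi_\pi)$ is a $\psi$-times-polynomial element supported on $e(s_r s_r \cdot \bi_\pi) = e(\bi_\pi)$ with a leading $\psi_w$ for $w = s_r \cdot s_r$... ). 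The point I am circling is simply that $(y_{r+1} - y_r)e(\bi_\pi) \equiv 0 \pmod{I_{>\pi}}$ because it equals $\pm\psi_r^2 e(\bi_\pi)$, and $\psi_r^2 e(\bi_\pi) = \psi_r\bigl(\psi_r e(\bi_\pi)\bigr)$ with $\psi_r e(\bi_\pi) \in R_\al e(s_r\cdot\bi_\pi)$ and $s_r\cdot\bi_\pi > \bi_\pi$, so $e(s_r\cdot\bi_\pi)\in I_{>\pi}$ by Lemma~\ref{lem:bigWords}, forcing $\psi_r^2 e(\bi_\pi)\in I_{>\pi}$.

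Assembling: for $r, r+1$ in the same $\pi$-block we have $i_r = i_{r+1}\pm 1$, hence $(y_r - y_{r+1})e(\bi_\pi) = \mp \psi_r^2 e(\bi_\pi) \in I_{>\pi}$ by the previous paragraph, giving $y_r e(\bi_\pi)\equiv y_{r+1}e(\bi_\pi)$; then transitivity along the segment of consecutive strands joining $y_r$ to $y_s$ inside the block yields the general statement. The main obstacle is purely bookkeeping: one must be careful that every strand index stepped through genuinely stays inside the same $\pi$-block (so that the relevant pair of residues always differs by exactly $1$, making the $\psi_r^2$ relation the nontrivial quadratic one rather than $\psi_r^2 e(\bi_\pi) = e(\bi_\pi)$ or $0$), and that the resulting word $s_r\cdot\bi_\pi$ is indeed lexicographically larger than $\bi_\pi$ so that Lemma~\ref{lem:bigWords} applies — this last point follows from the explicit form $\bi_{\be_k} = (m, m+1,\dots,n)$ being strictly increasing on each block, so swapping an adjacent increasing pair strictly increases the word.
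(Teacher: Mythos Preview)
Your argument is correct and is essentially the paper's proof, though you have buried a two-line argument under a great deal of noise. The paper simply observes that if $r$ and $r+1$ lie in the same $\pi$-block then $s_r\in\Si_\pi\setminus\{1\}$, hence $(y_r-y_{r+1})e(\bi_\pi)=\psi_r^2 e(\bi_\pi)\in I_{>\pi}$ by Corollary~\ref{cor:paraSubgp}; transitivity then handles general $r,s$. Your confusion about applying Corollary~\ref{cor:paraSubgp} is unnecessary: take $w=s_r$ and $f=1\in\Pol_d$ to get $\psi_r e(\bi_\pi)\in I_{>\pi}$ directly, then multiply on the left by $\psi_r$ (using that $I_{>\pi}$ is an ideal by the inductive hypothesis~(\ref{EInduction})). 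Your fallback route through Lemma~\ref{lem:bigWords} and $e(s_r\cdot\bi_\pi)\in I_{>\pi}$ is also fine --- it just re-proves Corollary~\ref{cor:paraSubgp} inline --- and your check that $s_r\cdot\bi_\pi>\bi_\pi$ (because each $\bi_{\be_k}$ is strictly increasing) is exactly what the proof of Corollary~\ref{cor:paraSubgp} uses.
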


\begin{proof}
If $r$ and $r+1$ are in the same $\pi$-block, then $s_r\in\Si_\pi\setminus\{1\}$, and so 
$(y_r - y_{r+1}) e(\bi_\pi) =\psi_r^2 e(\bi_\pi) \in I_{>\pi}$ by Corollary~\ref{cor:paraSubgp}.
\end{proof}

  Let us make the choice of reduced decompositions in $\Si_d$ so that whenever $w=w^\pi w_\pi$ for $w^\pi\in\Si^\pi$ and $w_\pi\in \Si_\pi$, then we have 
\begin{equation}\label{EChoice}
  \psi_w=\psi_{w^\pi}\psi_{w_\pi}.  
  \end{equation}

  Recall the nilHecke algebra $H_a$ from Section~\ref{SNH}.

\begin{Lemma}\label{lem:nilHecke}
For each $k=1,\dots,N$ there is a ring homomorphism 
  \begin{align*}
  \theta_k: H_{p_k} &\to (e(\bi_\pi) R_\al e(\bi_\pi) + I_{>\pi}) / I_{>\pi},
  \\
    y_s &\mapsto y_{k, s} e(\bi_\pi) + I_{>\pi}\qquad(1\leq s\leq p_k),
    \\
    \psi_r &\mapsto \psi_{k, r} e(\bi_\pi) + I_{>\pi}\qquad(1\leq r<p_k).
  \end{align*}
\end{Lemma}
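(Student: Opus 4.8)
The goal is to check that the assignment $y_s\mapsto y_{k,s}e(\bi_\pi)+I_{>\pi}$, $\psi_r\mapsto \psi_{k,r}e(\bi_\pi)+I_{>\pi}$ respects the defining relations \eqref{eq:HeckeRel1}--\eqref{eq:HeckeRel6} of $H_{p_k}$, working inside the idempotent-truncated quotient $(e(\bi_\pi)R_\al e(\bi_\pi)+I_{>\pi})/I_{>\pi}$. Since all the elements $y_{k,s},\psi_{k,r}$ are obtained via the parabolic embedding $\iota^k$ from elements of $R_{p_k\be_k}$, and since $e(\bi_\pi)$ restricted to the relevant block is $e(\bi_{\be_k}\cdots\bi_{\be_k})$ (with $p_k$ copies), the first reduction is to the case $\pi=\be^p$ (a single root repeated), i.e.\ to show that $y_{\be,s}$ and $\psi_{\be,r}$ satisfy the nilHecke relations modulo the relevant ideal inside $R_{p\be}$. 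The commuting relations \eqref{eq:HeckeRel2} and \eqref{eq:HeckeRel4} are immediate from the fact that $\psi_{\be,r}$ and $\psi_{\be,s}$ (resp.\ $\psi_{\be,r}$ and $y_{\be,s}$) involve KLR generators with disjoint strand supports when the blocks are disjoint (i.e.\ $|r-s|>1$, resp.\ $s\neq r,r+1$), so they commute already in $R_{p\be}$, before passing to the quotient.

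The substantive relations are \eqref{eq:HeckeRel1} ($\psi_r^2=0$), \eqref{eq:HeckeRel3} (braid), \eqref{eq:HeckeRel5} and \eqref{eq:HeckeRel6} ($\psi_r y_{r+1}=y_r\psi_r+1$ and its mirror). For \eqref{eq:HeckeRel5}--\eqref{eq:HeckeRel6}: $\psi_{\be,r}$ is a product of genuine $\psi$'s of $R_\al$ implementing the permutation of two adjacent $\be$-blocks, and $y_{\be,r+1}$ is a dot on the last strand of the $(r{+}1)$st block. Sliding the dot leftward through $\psi_{\be,r}$ strand-by-strand using the KLR relation $\psi_t y_{t+1}e(\bj)=(y_t\psi_t+\delta_{j_t,j_{t+1}})e(\bj)$, the error terms $\delta_{j_t,j_{t+1}}$ are governed by coincidences in the residue sequence $\bi_\pi$; only the crossing of the two strands carrying the \emph{same} residue (the final strand of $\bi_{\be}=(i,i{+}1,\dots,j)$, which equals $j$, meeting the final strand of the next copy, also $j$) contributes a $+1$, and the resulting broken diagram has a word strictly larger than $\bi_\pi$ hence lies in $I_{>\pi}$ except for the single surviving $+1$; all the intermediate terms telescope to give $y_{\be,r}\psi_{\be,r}+1$ modulo $I_{>\pi}$. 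This is where Corollary~\ref{cor:yEqual} (dots in the same $\pi$-block are equal mod $I_{>\pi}$) and Lemma~\ref{lem:bigWords} (larger words vanish mod $I_{>\pi}$) do the real work. For \eqref{eq:HeckeRel1}, $\psi_{\be,r}^2$ is a long product of $\psi$'s of $R_\al$; pushing one copy past the other and repeatedly applying $\psi_t^2e(\bj)=0$ or $(y_{t+1}-y_t)e(\bj)$ or $e(\bj)$ according to \eqref{Cartan}, together with the fact (Lemma~\ref{lem:ribbonsOnly}) that swapping two equal blocks fixes $\bi_\pi$, one finds that every surviving term either is zero outright or has a dot-difference $y_t-y_{t+1}$ with $t,t{+}1$ in the same $\pi$-block (hence is in $I_{>\pi}$ by Corollary~\ref{cor:yEqual}), or has a word $>\bi_\pi$; so $\psi_{\be,r}^2e(\bi_\pi)\in I_{>\pi}$. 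The braid relation \eqref{eq:HeckeRel3} follows similarly from the KLR braid relation, noting that for $\pi$-blocks of equal weight the $\pm1$ corrections in the type-$A$ KLR braid relation either cancel in pairs or land in $I_{>\pi}$.

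Concretely I would organize it as: (1) reduce to $\pi=\be^p$ via $\iota^k$, since $\iota^k$ is an algebra embedding and carries $e(\bi_{\be^{p_k}})$ to the appropriate summand of $e(\bi_\pi)$, and a relation among the $\iota^k$-images follows from the corresponding relation in $R_{p\be}$ modulo the pullback of $I_{>\pi}$; (2) dispose of \eqref{eq:HeckeRel2},\eqref{eq:HeckeRel4} by disjoint support; (3) prove \eqref{eq:HeckeRel5}/\eqref{eq:HeckeRel6} by the dot-sliding computation above, using Cor.~\ref{cor:yEqual} and Lemma~\ref{lem:bigWords}; (4) prove \eqref{eq:HeckeRel1} and \eqref{eq:HeckeRel3} by the analogous block-swap computations, again absorbing all correction terms into $I_{>\pi}$. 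The main obstacle is bookkeeping in step (3)–(4): one must track which of the many $\delta$-corrections and quadratic corrections in the KLR relations survive modulo $I_{>\pi}$, and verify that the pattern of residues in $\bi_\pi=(\bi_{\be_k}\text{ repeated})$ makes all but the desired term either literally zero or a larger-word term or a same-block dot difference. A clean way to control this is to argue diagrammatically: any term produced by resolving a double crossing of two equal $\be$-blocks is either a diagram whose underlying word strictly dominates $\bi_\pi$ (hence in $I_{>\pi}$ by Lemma~\ref{lem:bigWords}), or is (up to a unit and $I_{>\pi}$-error via Cor.~\ref{cor:yEqual}) one of the standard nilHecke terms, so the count of surviving terms matches the nilHecke relation exactly.
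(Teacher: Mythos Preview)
Your overall plan matches the paper's: verify each nilHecke relation modulo $I_{>\pi}$, with \eqref{eq:HeckeRel2} and \eqref{eq:HeckeRel4} immediate by disjoint support, and \eqref{eq:HeckeRel5}--\eqref{eq:HeckeRel6} handled by sliding the dot through the block-swap. However, your justification for why the error terms land in $I_{>\pi}$ is not right. After resolving the unique $j$--$j$ crossing, the resulting ``broken'' diagram does \emph{not} have a word strictly larger than $\bi_\pi$: both its top and bottom idempotents are $e(\bi_\pi)$, so Lemma~\ref{lem:bigWords} does not apply, and Corollary~\ref{cor:yEqual} is irrelevant since no dots appear. What the paper actually does is iteratively rewrite this error diagram via braid (Reidemeister~III) moves; at each step one peels off a diagram whose bottommost crossing lies \emph{within} a single $\pi$-block, hence factors as $X\cdot\psi_t e(\bi_\pi)$ with $s_t\in\Si_\pi\setminus\{1\}$ and therefore lies in $I_{>\pi}$ by Corollary~\ref{cor:paraSubgp}, plus a strictly smaller error diagram of the same shape. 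The recursion terminates at $e(\bi_\pi)$, giving the $+1$. So the operative tool is Corollary~\ref{cor:paraSubgp}, not ``larger words''.

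More seriously, for \eqref{eq:HeckeRel1} and \eqref{eq:HeckeRel3} you propose a direct diagrammatic expansion, tracking all quadratic and braid corrections. This would be extremely laborious, and your asserted trichotomy for the surviving terms (zero, same-block dot-difference, or larger word) is not justified. The paper bypasses all of this with a short \emph{degree} argument, which is the key idea you are missing: expand $\psi_{k,r}^2 e(\bi_\pi)$ (respectively the braid difference) in the standard basis as $\sum_u \psi_u f_u e(\bi_\pi)$. Since the element lies in $e(\bi_\pi)R_\al e(\bi_\pi)$, any $u$ with $f_u\neq 0$ satisfies $u\cdot\bi_\pi=\bi_\pi$. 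Terms with $u\notin\Si^\pi$ lie in $I_{>\pi}$ by \eqref{EChoice} and Corollary~\ref{cor:paraSubgp}. For $u\in\Si^\pi$, Lemma~\ref{lem:ribbonsOnly} forces $u$ to be a block permutation, and since only the $r$th and $(r{+}1)$st blocks (respectively $r,r{+}1,r{+}2$) are involved, $\psi_u e(\bi_\pi)$ has degree $\geq -2$ (respectively $\geq -4$). But $\deg(\psi_{k,r}^2 e(\bi_\pi))=-4$ (respectively $-6$), so no such term can occur, and the expression lies entirely in $I_{>\pi}$.
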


\begin{proof}
We check the relations (\ref{eq:HeckeRel1})--(\ref{eq:HeckeRel6}). 
The relations (\ref{eq:HeckeRel2}) and (\ref{eq:HeckeRel4}) are obvious.
  
To check relation (\ref{eq:HeckeRel1}), we write $\psi_{k, r}^2 e(\bi_\pi) = \sum_{u \in \Si_d} \psi_u f_u e(\bi_\pi)$ with $f_u \in \Pol_d$. If $u \notin \Si^\pi$, then $\psi_u f_u e(\bi_\pi) \in I_{>\pi}$ by (\ref{EChoice}) and Corollary~\ref{cor:paraSubgp}. On the other hand, suppose that $u \in \Si^\pi$ is such that $f_u \neq 0$. By Lemma~\ref{lem:ribbonsOnly}, $u$ permutes $\pi$-blocks of weight $\be_k$. Since $\psi_{k, r}$ only contains crossings between the $r^{th}$ and $(r+1)^{st}$ $\pi$-blocks of weight $\beta_k$, the same must be true for $\psi_u$. The only possibilities are $\psi_u = 1$ or $\psi_u = \psi_{k, r}$. Since $\deg(\psi_{k,r}^2 e(\bi_\pi)) = -4$, these are ruled out by degree. Thus $\psi_{k, r}^2 e(\bi_\pi)\in I_{>\pi}$. 

To check relation (\ref{eq:HeckeRel3}), we write 
$$(\psi_{k, r}\psi_{k,r+1}\psi_{k,r}-\psi_{k,r+1}\psi_{k,r}\psi_{k,r+1}) e(\bi_\pi) = \sum_{u \in \Si_d} \psi_u f_u e(\bi_\pi)$$ 
with $f_u \in \Pol_d$, and show as above that $\psi_u$ that appear with non-zero $f_u$ must be of the form $1,\psi_{k,r},\psi_{k,r+1},\psi_{k,r}\psi_{k,r+1}$ or $\psi_{k,r+1}\psi_{k,r}$. Since $$\deg(\psi_{k, r}\psi_{k,r+1}\psi_{k,r} e(\bi_\pi)) = -6$$ these are ruled out by degree. 

We finally check relation (\ref{eq:HeckeRel5}); relation (\ref{eq:HeckeRel6}) is checked similarly. Write $\bi_{\beta_k} = (i,i+1, \dots, j)$. We calculate $\psi_{k, r} y_{k, r+1} e(\bi_\pi)$ using Khovanov-Lauda diagram calculus, as explained in Section~\ref{SPrel}. In doing so, we will ignore the strands outside of the $r^{th}$ and $(r+1)^{st}$ $\pi$-blocks of weight $\be_k$. We have:  
\[
  \begin{braid}\tikzset{scale=0.8,baseline=12mm}
    \draw(1,8)--(7,0)node[below]{$i$};
    \draw(2,8)--(8,0);
    \draw[dotted](2.5,8)--(4.5,8);
    \draw[dotted](5.5,4)--(7.5,4);
    \draw[dotted](2.5,0)--(4.5,0);
    \draw(5,8)--(11,0);
    \draw(6,8)--(12,0)node[below]{$j$};
    \draw(7,8)--(1,0)node[below]{$i$};
    \draw(8,8)--(2,0);
    \draw[dotted](8.5,8)--(10.5,8);
    \draw[dotted](8.5,0)--(10.5,0);
    \draw(11,8)--(5,0);
    \draw(12,8)--(6,0)node[below]{$j$};
    \greendot(12,0);
  \end{braid}
  \begin{braid}\tikzset{scale=0.8,baseline=12mm}
    \draw(0,4)node[font=\normalsize]{$=$};
  \end{braid}
  \begin{braid}\tikzset{scale=0.8,baseline=12mm}
    \draw(1,8)--(7,0)node[below]{$i$};
    \draw(2,8)--(8,0);
    \draw[dotted](2.5,8)--(4.5,8);
    \draw[dotted](5.5,4)--(7.5,4);
    \draw[dotted](2.5,0)--(4.5,0);
    \draw(5,8)--(11,0);
    \draw(6,8)--(12,0)node[below]{$j$};
    \draw(7,8)--(1,0)node[below]{$i$};
    \draw(8,8)--(2,0);
    \draw[dotted](8.5,8)--(10.5,8);
    \draw[dotted](8.5,0)--(10.5,0);
    \draw(11,8)--(5,0);
    \draw(12,8)--(6,0)node[below]{$j$};
    \greendot(6,8);
  \end{braid}
  \begin{braid}\tikzset{scale=0.8,baseline=12mm}
    \draw(0,4)node[font=\normalsize]{$+$};
  \end{braid}
  \begin{braid}\tikzset{scale=0.8,baseline=12mm}
    \draw(1,8)--(7,0)node[below]{$i$};
    \draw(2,8)--(8,0);
    \draw[dotted](2.5,8)--(4.5,8);
    \draw[dotted](5.5,4)--(7.5,4);
    \draw[dotted](2.5,0)--(4.5,0);
    \draw(5,8)--(11,0);
    \draw(6,8)--(9,4)--(6,0)node[below]{$j$};
    \draw(7,8)--(1,0)node[below]{$i$};
    \draw(8,8)--(2,0);
    \draw[dotted](8.5,8)--(10.5,8);
    \draw[dotted](8.5,0)--(10.5,0);
    \draw(11,8)--(5,0);
    \draw(12,8)--(12,0)node[below]{$j$};
  \end{braid}.
\]
  The first term is $y_{k, r} \psi_{k, r} e(\bi_\pi)$. The second term is
\[
  \begin{braid}\tikzset{scale=0.8,baseline=12mm}
    \draw(1,8)--(7,0)node[below]{$i$};
    \draw(2,8)--(8,0);
    \draw[dotted](2.5,8)--(4.5,8);
    \draw[dotted](5.5,4)--(7.5,4);
    \draw[dotted](2.5,0)--(4.5,0);
    \draw(5,8)--(11,0);
    \draw(6,8)--(3,4)--(6,0)node[below]{$j$};
    \draw(7,8)--(1,0)node[below]{$i$};
    \draw(8,8)--(2,0);
    \draw[dotted](8.5,8)--(10.5,8);
    \draw[dotted](8.5,0)--(10.5,0);
    \draw(11,8)--(5,0);
    \draw(12,8)--(12,0)node[below]{$j$};
  \end{braid}
  \begin{braid}\tikzset{scale=0.8,baseline=12mm}
    \draw(0,4)node[font=\normalsize]{$+$};
  \end{braid}
  \begin{braid}\tikzset{scale=0.8,baseline=12mm}
    \draw(1,8)--(8,0)node[below]{$i$};
    \draw(2,8)--(9,0);
    \draw[dotted](2.5,8)--(4.5,8);
    \draw[dotted](5.5,4)--(7.5,4);
    \draw[dotted](2.5,0)--(4.5,0);
    \draw(5,8)--(12,0);
    \draw(6,8)--(9.5,4)--(6,0);
    \draw(7,8)--(10.5,4)--(7,0)node[below]{$j$};
    \draw(8,8)--(1,0)node[below]{$i$};
    \draw(9,8)--(2,0);
    \draw[dotted](9.5,8)--(11.5,8);
    \draw[dotted](9.5,0)--(11.5,0);
    \draw(12,8)--(5,0);
    \draw(13,8)--(13,0);
    \draw(14,8)--(14,0)node[below]{$j$};
  \end{braid}.
\]
  The first term is in $I_{>\pi}$ by Corollary~\ref{cor:paraSubgp}. Continuing in this way, we obtain the result.
\end{proof}

\begin{Lemma} \label{LNew}
The maps $\theta_1,\dots,\theta_N$ from Lemma~\ref{lem:nilHecke} have commuting images, and so define a map \begin{align*}
\theta: H_{p_1} \otimes \dots \otimes H_{p_N} &\to (e(\bi_\pi) R_\al e(\bi_\pi) + I_{>\pi}) / I_{>\pi},
\\
h_1\otimes\dots\otimes h_N & \mapsto \theta_1(h_1)\dots\theta_N(h_N).
\end{align*}
Moreover, the image of $\theta$ is contained in $I_\pi / I_{>\pi}$.
\end{Lemma}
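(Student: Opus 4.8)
The plan is to establish the two assertions of Lemma~\ref{LNew} separately, both reducing to combinatorial facts about where the relevant $\psi_u$ can land after straightening.

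\textbf{Commuting images.} For $k\neq k'$, the elements $\psi_{k,r}, y_{k,s}$ involve only strands in $\pi$-blocks of weight $\be_k$, while $\psi_{k',r'}, y_{k',s'}$ involve only strands in $\pi$-blocks of weight $\be_{k'}$; these strand ranges are disjoint. In $R_\al$ itself the generators $\iota^k(\cdot)$ and $\iota^{k'}(\cdot)$ need not commute on the nose (because braid generators for non-adjacent but not-far-apart strands interact through intervening strands), but I would argue that modulo $I_{>\pi}$ they do. Concretely, for any product $x$ of the $\psi_{k,r}, y_{k,s}$ and any product $x'$ of the $\psi_{k',r'}, y_{k',s'}$, expand $(xx'-x'x)e(\bi_\pi)=\sum_{u\in\Si_d}\psi_u f_u e(\bi_\pi)$ with $f_u\in\Pol_d$, choosing reduced decompositions compatibly with $\Si^\pi$ as in~\eqref{EChoice}. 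Any $u\notin\Si^\pi$ contributes to $I_{>\pi}$ by Corollary~\ref{cor:paraSubgp}. For $u\in\Si^\pi$ with $f_u\neq0$, Lemma~\ref{lem:ribbonsOnly} forces $u$ to permute $\pi$-blocks within each weight class; since all crossings occurring in $x$ and $x'$ lie strictly within the $\be_k$- and $\be_{k'}$-block ranges respectively, $\psi_u$ can only be a product of such block-permutations, and a degree count (the degree of $(xx'-x'x)e(\bi_\pi)$ equals that of $xx'e(\bi_\pi)$, which is bounded below by $-2\ell(u_{\max})$ for the block-permutations involved) rules out all nontrivial possibilities — exactly the argument used in Lemma~\ref{lem:nilHecke} for relations~\eqref{eq:HeckeRel1} and~\eqref{eq:HeckeRel3}. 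Hence $\theta_k(h_k)$ and $\theta_{k'}(h_{k'})$ commute, and $\theta$ is well-defined.

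\textbf{Image lands in $I_\pi/I_{>\pi}$.} It suffices to treat a basis of $H_{p_1}\otimes\dots\otimes H_{p_N}$, say $h_k=\psi_{w_k}y_1^{m_{k,1}}\dots y_{p_k}^{m_{k,p_k}}$ in standard form (Theorem~\ref{TBasis}(i)). Then $\theta(h_1\otimes\dots\otimes h_N)$ is represented by $\prod_k \iota^k(\psi_{\be_k,w_k})\,\iota^k(\text{monomial in }y_{\be_k,s})\,e(\bi_\pi)$. I want to rewrite this, modulo $I_{>\pi}$, as a $\Z$-linear combination of elements $\psi_w y_\pi\, b\, e_\pi$ with $w\in\Si^\pi$ and $b\in\Sym_\pi$, which lie in $I_\pi'\subseteq I_\pi$. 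The two key moves: (1) using Corollary~\ref{cor:yEqual}, any $y_r e(\bi_\pi)$ with $r$ in a given $\pi$-block equals, mod $I_{>\pi}$, the $y_{k,s}$ sitting on the last strand of that block; so every polynomial factor can be pushed into $\Pol_{\al,p_k}$-variables. (2) The product of $\psi$-parts $\prod_k\iota^k(\psi_{\be_k,w_k})$ is a $\psi_v$ with $v$ permuting $\pi$-blocks within weight classes, hence $v\in\Si^\pi$ by Lemma~\ref{lem:ribbonsOnly} (and~\eqref{EChoice} lets us factor it as $\psi_v$). To produce the shape $\psi_w y_\pi b e_\pi = \psi_w y_\pi b\,\psi_\pi y_\pi e(\bi_\pi)$ I would use the nilHecke computations of Section~\ref{SNH}: inside each weight-$\be_k$ block the elements $\psi_{k,r}, y_{k,s}, \de_{\be_k,p_k}, e_\pi$-factors satisfy (modulo $I_{>\pi}$, via $\theta_k$) the nilHecke relations, so Theorem~\ref{thm:polySym} and Lemma~\ref{cor:nHFree} give that any $\psi_{\be_k,w_k}\cdot(\text{monomial})\cdot e(\bi_\pi)$ can be rewritten as $\sum \psi_{\be_k,u}\,\de_{\be_k,p_k}\,b^{(k)}\,(e_a\text{-part})$ with $u\in\Si_{p_k}$ and $b^{(k)}\in\Sym_{\be_k,p_k}$, then absorb the $\Sym$-part and collect. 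Applying Corollary~\ref{cor:yEqual} once more to move stray dots onto the designated strands finishes the reduction.

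\textbf{Main obstacle.} The delicate point is the second part: carefully tracking, modulo $I_{>\pi}$, that the straightening of $\prod_k\iota^k(\psi_{\be_k,w_k}\cdot\text{monomial})\,e(\bi_\pi)$ really only produces terms $\psi_w(\cdots)e(\bi_\pi)$ with $w\in\Si^\pi$ and with the polynomial part lying in $\Sym_\pi\cdot y_\pi$ after using Corollary~\ref{cor:yEqual}. In particular one must check that commuting the various $\iota^k$-factors past each other to separate the $\psi$-parts from the $y$-parts introduces only error terms in $I_{>\pi}$ (again by the $u\notin\Si^\pi$/degree dichotomy), and that the nilHecke rewriting within one block does not disturb the other blocks — this is where the commuting-images statement just proved is used essentially. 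I expect the bookkeeping with reduced decompositions~\eqref{EChoice} and the repeated appeals to Lemma~\ref{lem:ribbonsOnly} plus degree bounds to be the technical heart; no genuinely new idea beyond those in Lemma~\ref{lem:nilHecke} should be required.
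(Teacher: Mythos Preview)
Your first part over-complicates a triviality. For $k\neq k'$ the elements $\psi_{k,r},y_{k,s}$ and $\psi_{k',r'},y_{k',s'}$ lie in different tensor factors of the parabolic subalgebra $R_{p_1\be_1,\dots,p_N\be_N}\subseteq R_\al$; concretely, the basic generators $\psi_j$ occurring in $\psi_{k,r}$ and those occurring in $\psi_{k',r'}$ satisfy $|j-j'|>1$ throughout, so they commute in $R_\al$ itself, not merely modulo $I_{>\pi}$. The paper simply says ``obvious'' here, and it is.

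For the second part your strategy has a gap. You start from the standard basis $h_k=\psi_{w_k}\,(\text{monomial})$ of $H_{p_k}$ and then try to manufacture the factor $e_\pi$ downstream in $R_\al/I_{>\pi}$, citing Theorem~\ref{thm:polySym} and Lemma~\ref{cor:nHFree}. But those results describe $\Pol_a$ over $\Sym_a$ and $H_ae_a$ over $e_aH_ae_a$; neither says that an arbitrary element of $H_a$ can be expressed through $e_a$. The missing ingredient is Theorem~\ref{thm:nHCell} (equivalently Corollary~\ref{CSurj}, $H_ae_aH_a=H_a$): it is precisely this that produces the idempotent. The paper exploits this at the source rather than the target: since $H_{p_k}$ is spanned by $\psi_{w_k}b_k\de_{p_k}e_{p_k}\psi_{v_k}^\tau$, one simply computes $\theta$ on such tensors and reads off directly that the image is $(\psi_{1,w_1}\cdots\psi_{N,w_N})\,b\,y_\pi e_\pi\,(\psi_{1,v_1}^\tau\cdots\psi_{N,v_N}^\tau)+I_{>\pi}\in I_\pi'/I_{>\pi}$, with no straightening, no degree arguments, and no appeals to Corollary~\ref{cor:yEqual} required. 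Your route could be repaired by invoking Theorem~\ref{thm:nHCell} upstream, at which point it collapses to the paper's argument.
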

\begin{proof}
The first statement is obvious. 
To prove the statement about the image of $\theta$, recall the idempotent $e_{p_k} =\psi_{w_0^k} \de_{p_k}\in H_{p_k}$. It is clear that
  \begin{align*}
    \theta(\de_{p_1} \otimes \dots \otimes \de_{p_N}) = y_\pi e(\bi_\pi) + I_{>\pi}\quad \text{and}\quad
    \theta(e_{p_1} \otimes \dots \otimes e_{p_N}) = e_\pi + I_{>\pi}.
  \end{align*}
  For each $k$, choose $w_k, v_k \in \Si_{p_k}$ and $b_k \in \Sym_{p_k}$. Then
  \begin{align*}
    &\theta(\psi_{w_1} b_1 \de_{p_1} e_{p_1} \psi_{v_1}^\tau \otimes \dots \otimes \psi_{w_N} b_N \de_{p_N} e_{p_N} \psi_{v_N}^\tau) \\
         = &(\psi_{1, w_1} \dots \psi_{N, w_N}) (\iota^1(b_1) \dots \iota^N(b_N)) y_\pi e_\pi (\psi_{1, v_1}^\tau \dots \psi_{N, v_N}^\tau) + I_{>\pi}.
  \end{align*}
 The right hand side of this equation is in $I_\pi/I_\pi'$, because $\psi_{1, w_1} \dots \psi_{N, w_N}$ is of the form $\psi_w$ for $w\in\Si^\pi$, $\psi_{1, v_1}^\tau \dots \psi_{N, v_N}^\tau$ is of the form $\psi_v^\tau$ for $v\in \Si^\pi$, and $\iota^1(b_1) \dots \iota^N(b_N)\in \Sym_\pi$. 
 By Theorem~\ref{thm:nHCell}, each $H_{p_k}$ is spanned by $\{\psi_w \Sym_{p_k}\de_{p_k} e_{p_k} \psi_v^\tau\ |\ v, w \in \Si_{p_k}\}$. We conclude that $\im(\theta) \subseteq I_\pi/I_{>\pi}$.
\end{proof}

\begin{Corollary}
  $e(\bi_\pi) \in I_{\pi}$.
\end{Corollary}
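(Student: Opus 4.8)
The plan is to obtain $e(\bi_\pi)$ as the value of the homomorphism $\theta$ from Lemma~\ref{LNew} on the identity element of $H_{p_1}\otimes\dots\otimes H_{p_N}$. First I would observe that the target ring $(e(\bi_\pi)R_\al e(\bi_\pi)+I_{>\pi})/I_{>\pi}$ of each $\theta_k$ in Lemma~\ref{lem:nilHecke} is unital with identity $e(\bi_\pi)+I_{>\pi}$, since $e(\bi_\pi)$ is the unit of the corner algebra $e(\bi_\pi)R_\al e(\bi_\pi)$. Because $H_{p_k}$ is presented as a \emph{unital} algebra on the generators $y_s,\psi_r$, the ring homomorphism $\theta_k$ sends $1\mapsto e(\bi_\pi)+I_{>\pi}$ for every $k$. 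Consequently $\theta(1\otimes\dots\otimes 1)=\theta_1(1)\cdots\theta_N(1)=(e(\bi_\pi)+I_{>\pi})^N=e(\bi_\pi)+I_{>\pi}$, using that $e(\bi_\pi)$ is idempotent.

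On the other hand, Lemma~\ref{LNew} already asserts $\im(\theta)\subseteq I_\pi/I_{>\pi}$. Combining the two facts, $e(\bi_\pi)+I_{>\pi}\in I_\pi/I_{>\pi}$, and since $I_{>\pi}\subseteq I_\pi$ we conclude $e(\bi_\pi)\in I_\pi$. If one prefers an argument that does not lean on the unitality of $\theta_k$ implicitly, one can instead invoke Corollary~\ref{CSurj}, which gives $1=\sum_j a_{k,j}e_{p_k}b_{k,j}$ in each $H_{p_k}$; then $1\otimes\dots\otimes 1$ expands as a sum of tensors of such elements, and applying $\theta$ each summand lands in the span defining $I_\pi$ after rewriting via the affine cellular basis of Theorem~\ref{thm:nHCell}, exactly as in the proof of Lemma~\ref{LNew}.

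I do not anticipate a genuine obstacle here, since all the substantive work has been front-loaded into Lemma~\ref{LNew} and the nilHecke results feeding it. The only point requiring care is the bookkeeping around the idempotent $e(\bi_\pi)$ playing the role of $1$ in the target ring --- that is, making sure ``$\theta$ applied to $1$'' really produces $e(\bi_\pi)$ modulo $I_{>\pi}$ and not some other element.
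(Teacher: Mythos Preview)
Your proposal is correct and follows essentially the same route as the paper: the paper's proof is the single line ``$e(\bi_\pi) + I_{>\pi}=\theta(1 \otimes \dots \otimes 1) \in I_\pi / I_{>\pi}$ by Lemma~\ref{LNew},'' and you have simply unpacked why $\theta(1\otimes\cdots\otimes 1)=e(\bi_\pi)+I_{>\pi}$ via unitality of the $\theta_k$. Your added care about the identity element in the target corner algebra is well placed but not something the paper dwells on.
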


\begin{proof}
Note that  $e(\bi_\pi) + I_{>\pi}=\theta(1 \otimes \dots \otimes 1)  \in I_\pi / I_{>\pi}$ by Lemma~\ref{LNew}.
\end{proof}

We come now to the main lemma.

\begin{Lemma}\label{lem:Main}
 If $w \in \Si_d$ and $v \in \Si^\pi$ then $\psi_w \Pol_d e_\pi \psi_v^\tau \subseteq I_\pi$.
\end{Lemma}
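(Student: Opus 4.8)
The plan is to reduce the general case to the two facts already in place: the structure of $I_{>\pi}$ from \eqref{EInduction} (together with Lemma~\ref{lem:bigWords} and its corollaries), and the nilHecke computations packaged in Lemma~\ref{LNew}. The key point is that $e_\pi = \psi_\pi y_\pi e(\bi_\pi)$ is built from the idempotents $e_{p_k}$ via $\theta$, and $\im(\theta)\subseteq I_\pi/I_{>\pi}$, so modulo $I_{>\pi}$ the element $e_\pi$ already behaves like a product of nilHecke idempotents sitting inside $I_\pi$.

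First I would handle the left factor $\psi_w$ for arbitrary $w\in\Si_d$. Writing $w = w^\pi w_\pi$ with $w^\pi\in\Si^\pi$ and $w_\pi\in\Si_\pi$, the choice \eqref{EChoice} gives $\psi_w = \psi_{w^\pi}\psi_{w_\pi}$. If $w_\pi\neq 1$, then by Corollary~\ref{cor:paraSubgp} the tail $\psi_{w_\pi}\Pol_d e(\bi_\pi)$ already lies in $I_{>\pi}\subseteq I_\pi$, and since $e_\pi = \psi_\pi y_\pi e(\bi_\pi)$ has $e(\bi_\pi)$ on its right while everything to the right of that (namely $\psi_v^\tau$) keeps us inside the two-sided ideal, the whole expression $\psi_w\Pol_d e_\pi\psi_v^\tau$ lands in $I_{>\pi}$. (Here one uses that $\psi_\pi$ and $y_\pi$ are polynomials-times-$\psi$'s supported on $\Si_\pi$-type permutations within $\pi$-blocks, so commuting them past does not disturb the argument; more cleanly, $I_{>\pi}$ is a two-sided ideal and $\psi_{w_\pi}f e(\bi_\pi)\in I_{>\pi}$ forces $\psi_{w_\pi}f e(\bi_\pi)\cdot(\text{anything})\in I_{>\pi}$, and $e_\pi$ starts with $\psi_\pi y_\pi e(\bi_\pi) = e(\bi_\pi)\psi_\pi y_\pi e(\bi_\pi)$ so we can insert the needed $e(\bi_\pi)$.) So we reduce to $w\in\Si^\pi$.

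Next, with $w\in\Si^\pi$, I would expand the middle polynomial factor. An arbitrary $f\in\Pol_d$ acting on $e(\bi_\pi)$ can, using Corollary~\ref{cor:yEqual}, be rewritten modulo $I_{>\pi}$ so that the dots within each $\pi$-block of weight $\be_k$ all sit on a single chosen strand — i.e.\ $f e(\bi_\pi) \equiv g e(\bi_\pi)\pmod{I_{>\pi}}$ where $g$ is a polynomial in the variables $y_{k,s}$ (the designated last-strand dots). Thus $g\in\Pol_{\be_1,p_1}\otimes\cdots\otimes\Pol_{\be_N,p_N}$, which is $\theta(\Pol_{p_1}\otimes\cdots\otimes\Pol_{p_N}\cdot e(\bi_\pi))$ up to $I_{>\pi}$. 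Now I would invoke the nilHecke half of the story: in each $H_{p_k}$ we have the identity $\Pol_{p_k} = \Sym_{p_k}\text{-span of }\{\psi_u(\de_{p_k})\}$ (Theorem~\ref{thm:polySym}), equivalently every polynomial times $e_{p_k}$ is a $\Sym_{p_k}$-combination of $\psi_u\de_{p_k}e_{p_k}$. Transporting this through $\theta$ and using $\theta(\de_{p_1}\otimes\cdots) = y_\pi e(\bi_\pi) + I_{>\pi}$, $\theta(e_{p_1}\otimes\cdots) = e_\pi + I_{>\pi}$, one gets that $g\, e_\pi \equiv \sum_u \psi_{\vec u}\, b_{\vec u}\, y_\pi e_\pi\pmod{I_{>\pi}}$ with $b_{\vec u}\in\Sym_\pi$ and each $\psi_{\vec u}$ of the form $\psi_{u'}$ for $u'\in\Si^\pi$ permuting $\pi$-blocks of the appropriate weights. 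Hence $\psi_w g e_\pi \equiv \sum_u \psi_w\psi_{\vec u} b_{\vec u} y_\pi e_\pi$; absorbing $\psi_w\psi_{\vec u}$ — after re-expanding in the $\Si^\pi$/$\Si_\pi$ decomposition and throwing the $\Si_\pi$-part into $I_{>\pi}$ as before — into a single $\psi_{w'}$ with $w'\in\Si^\pi$, and commuting $b_{\vec u}\in\Sym_\pi$ past $\psi_{w'}$ is fine (it only reshuffles which $\Si^\pi$-element we use, or can be left on the other side). Finally append $\psi_v^\tau$ on the right. The upshot is that $\psi_w\Pol_d e_\pi\psi_v^\tau$ is, modulo $I_{>\pi}$, a $\Z$-span of terms $\psi_{w'} y_\pi \Sym_\pi e_\pi \psi_v^\tau$ with $w',v\in\Si^\pi$, i.e.\ lies in $I_\pi' + I_{>\pi} \subseteq I_\pi$; since $I_{>\pi}\subseteq I_\pi$ by definition, we conclude $\psi_w\Pol_d e_\pi\psi_v^\tau\subseteq I_\pi$.

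\textbf{Main obstacle.} The routine-looking but genuinely fiddly step is the bookkeeping when multiplying $\psi_w$ (arbitrary in $\Si_d$) into $\psi_{\vec u}$: one must control, using \eqref{EChoice}, Lemma~\ref{lem:ribbonsOnly} and Corollary~\ref{cor:paraSubgp}, exactly which crossings survive modulo $I_{>\pi}$ and verify that the surviving ones reassemble into a single $\psi_{w'}$ with $w'\in\Si^\pi$ while the $\Sym_\pi$-coefficient and the $y_\pi e_\pi$ tail stay put (using Lemma~\ref{LComm} for the commutations). This is where the degree arguments and the ``$\psi_u$ only involves crossings between the relevant $\pi$-blocks'' reasoning from the proof of Lemma~\ref{lem:nilHecke} get reused, and it is the part most likely to hide a subtlety; everything else is a direct appeal to Theorem~\ref{thm:polySym}, Theorem~\ref{thm:nHCell} and Lemma~\ref{LNew}.
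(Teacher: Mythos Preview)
Your reduction to $w\in\Si^\pi$ and the use of Corollary~\ref{cor:yEqual} to replace $f$ by a polynomial $g$ in the block variables $y_{k,s}$, followed by transporting Theorem~\ref{thm:polySym} through $\theta$, all match the paper's argument. The gap is in your ``absorbing'' step: you claim that $\psi_w\psi_{\vec u}$ can, after discarding the $\Si_\pi$-part into $I_{>\pi}$, be written as a single $\psi_{w'}$ with $w'\in\Si^\pi$. This is not true. Expanding $\psi_w\psi_{\vec u}\,e(\bi_\pi)=\sum_x \psi_x f_x\,e(\bi_\pi)$ via Theorem~\ref{TBasisGen} produces polynomial coefficients $f_x\in\Pol_d$ (not scalars, and not in the block variables), so after killing the $x\notin\Si^\pi$ terms you are left with $\sum_{x\in\Si^\pi}\psi_x f_x\,b_{\vec u}\,y_\pi e_\pi\psi_v^\tau$. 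Since $f_x b_{\vec u} y_\pi\in\Pol_d$ is again an arbitrary polynomial, this is precisely the kind of expression you started with, and the argument is circular.

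The paper breaks this circularity by running the entire proof as an upward induction on $\deg(\psi_w e(\bi_\pi))$. The key observation is that when $\vec u\neq(1,\dots,1)$ one has $\deg(\psi_{\vec u}e(\bi_\pi))<0$, hence every $\psi_x f_x e(\bi_\pi)$ appearing in the expansion of $\psi_w\psi_{\vec u}e(\bi_\pi)$ satisfies $\deg(\psi_x e(\bi_\pi))\le\deg(\psi_x f_x e(\bi_\pi))<\deg(\psi_w e(\bi_\pi))$, so the inductive hypothesis applies to $\psi_x\cdot(f_x b_{\vec u} y_\pi)\cdot e_\pi\psi_v^\tau$. When $\vec u=(1,\dots,1)$ the term is already in $I_\pi'$ after the $\Si^\pi/\Si_\pi$ reduction you describe. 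You correctly flagged this spot as the delicate one, but the degree bookkeeping you allude to is not optional polish: without the induction on $\deg(\psi_w e(\bi_\pi))$ there is no termination and the proof does not close.
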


\begin{proof}
  We prove this by upward induction on $\deg(\psi_w e(\bi_\pi))$, using as the induction base sufficiently negative degree for which $\psi_w e(\bi_\pi)=0$, and so also $\psi_w \Pol_d e_\pi \psi_v^\tau = \{0\} \subseteq I_\pi$.  

  Let $f \in \Pol_d$. 
  By Corollary~\ref{cor:yEqual}, $f e(\bi_\pi) \equiv g e(\bi_\pi)\pmod{I_{>\pi}}$ for some 
  $$g \in \operatorname{Pol}:=\Z[y_{1,1}, \dots, y_{1,p_1},\dots,y_{N,1},\dots,y_{N,p_N}].$$ Furthermore, we may assume that $g = g_1 \dots g_N$ with each 
  $$g_k \in \operatorname{Pol}_k:=\Z[y_{k,1}, \dots, y_{k, p_k}],$$ since 
  $\operatorname{Pol}\cong\operatorname{Pol}_1\otimes\dots\otimes\operatorname{Pol}_N$.

  Denote by $\widehat{g_k} \in H_{p_k}$ the image of $g_k$ under the isomorphism $\operatorname{Pol}_k \iso \Pol_{p_k}$. Then
  \begin{equation}\label{eq:alpha1}
    \theta(\widehat{g_1}e_{p_1} \otimes \dots \otimes \widehat{g_N}e_{p_N}) = g_1 \dots g_N e_\pi + I_{>\pi}.
  \end{equation}
  On the other hand, by Theorem~\ref{thm:polySym}, there exist $\widehat{b_{k,u}} \in \Sym_{p_k}$ such that
  \begin{align*}
    \widehat{g_k} e_{p_k} &= \sum_{u \in \Si_{p_k}} {\widehat{b_{k,u}} \psi_u (\de_{p_k})} e_{p_k}
            = \sum_{u \in \Si_{p_k}} {\widehat{b_{k,u}} \psi_u \de_{p_k} e_{p_k}}
            = \sum_{u \in \Si_{p_k}} {\psi_u \widehat{b_{k,u}}} \de_{p_k} e_{p_k},
  \end{align*}
where we have used Theorem~\ref{thm:nHFacts}(i) for the second equality and Theorem~\ref{TCenter} for the third equality.   Therefore, denoting
$$
\psi_{u_1,\dots,u_N}:=\psi_{1,u_1}\dots\psi_{N,u_N}\quad\text{and}\quad 
b_{u_1,\dots,u_N}:=b_{1,u_1}\dots b_{N,u_N}
$$
  \begin{equation}\label{eq:alpha2}
    \theta(\widehat{g_1}e_{p_1} \otimes \dots \otimes \widehat{g_N}e_{p_N}) = \sum_{u_1 \in \Si_{p_1},\dots,u_N\in\Si_{p_N}} \psi_{u_1,\dots,u_N}b_{u_1,\dots,u_N} y_\pi e_\pi + I_{>\pi}.
  \end{equation}
  We now equate the right hand sides of (\ref{eq:alpha1}) and (\ref{eq:alpha2}) and multiply by $\psi_w$ on the left and by $\psi_v^\tau$ on the right to obtain
  \[
    \psi_w g e_\pi \psi_v^\tau \equiv \sum_{u_1 \in \Si_{p_1},\dots,u_N\in\Si_{p_N}} \psi_w\psi_{u_1,\dots,u_N}b_{u_1,\dots,u_N} y_\pi e_\pi\psi_v^\tau \pmod{I_{>\pi}}.
  \]
  We are therefore reduced to proving that each summand
 on the right hand side belongs to $I_\pi$. Write, using Theorem~\ref{TBasisGen},
  \[
   \psi_w\psi_{u_1,\dots,u_N} e(\bi_\pi) = \sum_{x \in \Si_d} \psi_{x} f_{x} e(\bi_\pi)
  \]
  so that 
  \begin{equation}\label{eq:rewrite}
    \psi_w\psi_{u_1,\dots,u_N}b_{u_1,\dots,u_N} y_\pi e_\pi\psi_v^\tau = \sum_{x \in \Si_d} \psi_{x} f_{x} b_{u_1,\dots,u_N} y_\pi e_\pi\psi_v^\tau.
  \end{equation}
 
If at least one of $u_1,\dots,u_N$ is not $1$, then $\deg(\psi_{u_1,\dots,u_N} e(\bi_\pi)) < 0$, and so 
  $$\deg(\psi_{x} e(\bi_\pi)) \leq \deg(\psi_{x} f_{x} e(\bi_\pi)) < \deg(\psi_w e(\bi_\pi)).
  $$ 
 Therefore we are done by induction in this case.

 Now, let  $u_1=\dots=u_N=1$. By (\ref{EChoice}), we can write $\psi_w = \psi_{w_1} \psi_{w_2}$ with $w_1 \in \Si^\pi$ and $w_2 \in \Si_\pi$. If $w_2 \neq 1$, then we are done by Corollary \ref{cor:paraSubgp}. Otherwise, recalling that $b_{u_1,\dots,u_N} \in \Sym_\pi$, we have 
$
    \psi_w b_{u_1,\dots,u_N} y_\pi e_\pi \psi_v^\tau \in I_\pi'
$
  by definition.
\end{proof}

\begin{Corollary}
  $I_\pi$ is an ideal.
\end{Corollary}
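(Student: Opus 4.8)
The plan is to show that $I_\pi$ is stable under left multiplication by $R_\al$; stability under right multiplication then follows for free from the anti-involution $\tau$ together with Lemma~\ref{LITauInv}, since $I_\pi r = \tau(\tau(r)\,\tau(I_\pi)) = \tau(\tau(r)\,I_\pi)$ and $\tau(I_\pi)=I_\pi$, so $R_\al I_\pi\subseteq I_\pi$ forces $I_\pi R_\al\subseteq I_\pi$. Because $I_\pi = I_{>\pi}+I_\pi'$ as $\Z$-modules and $I_{>\pi}$ is already a two-sided ideal by the running inductive hypothesis \eqref{EInduction}, it is enough to check that $g\,\xi\in I_\pi$ whenever $g$ is one of the algebra generators $e(\bi)$, $y_s$, $\psi_r$ of $R_\al$ and $\xi=\psi_w y_\pi b\, e_\pi\psi_v^\tau$ is a typical spanning element of $I_\pi'$, with $w,v\in\Si^\pi$ and $b\in\Sym_\pi$.

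First I would record the bookkeeping fact that $e_\pi=e(\bi_\pi)\,e_\pi=e_\pi\,e(\bi_\pi)$: indeed $y_\pi$ and $b$ are polynomials in the $y$'s and commute with idempotents, while $\psi_\pi$ only permutes $\pi$-blocks of equal weight and hence commutes with $e(\bi_\pi)$. This lets me rewrite $\xi=\psi_w\,e(\bi_\pi)\,y_\pi b\,e_\pi\psi_v^\tau$, isolating an $e(\bi_\pi)$ just after $\psi_w$. The key move is then: for any generator $g$, Theorem~\ref{TBasisGen} allows me to expand $g\,\psi_w e(\bi_\pi)=\sum_{x\in\Si_d}\psi_x f_x e(\bi_\pi)$ with $f_x\in\Pol_d$ (for $g=e(\bi)$ this is just $\de_{\bi,\,w\cdot\bi_\pi}\psi_w e(\bi_\pi)$). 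Multiplying back through and absorbing the $e(\bi_\pi)$ into the polynomial and into $e_\pi$, I obtain
\[
  g\,\xi=\sum_{x\in\Si_d}\psi_x\,(f_x\, y_\pi b)\,e_\pi\psi_v^\tau .
\]

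Finally, since $f_x y_\pi b\in\Pol_d$ for every $x$ and $v\in\Si^\pi$, Lemma~\ref{lem:Main} applies to each summand — and this is exactly where it matters that that lemma permits the left Weyl group element to be an \emph{arbitrary} $x\in\Si_d$ — giving $\psi_x(f_x y_\pi b)\,e_\pi\psi_v^\tau\in I_\pi$, hence $g\,\xi\in I_\pi$. Running this over all generators $g$ and all spanning elements $\xi$ of $I_\pi'$, and remembering that $I_{>\pi}$ is an ideal, yields $R_\al I_\pi\subseteq I_\pi$, and then $I_\pi R_\al\subseteq I_\pi$ by the $\tau$-argument. I do not expect a real obstacle here: essentially all of the content has already been absorbed into Lemma~\ref{lem:Main} (and, beneath it, Lemmas~\ref{lem:nilHecke} and~\ref{LNew}). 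The only points requiring a little care are the $\tau$-reduction of right multiplication to left multiplication, and making sure $y_\pi$ and $b$ genuinely lie in $\Pol_d$ so they can be swept into the polynomial slot of Lemma~\ref{lem:Main} rather than being carried around as separate factors.
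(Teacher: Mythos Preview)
Your proposal is correct and takes essentially the same approach as the paper: reduce to a left ideal via $\tau$-invariance (Lemma~\ref{LITauInv}), expand using the basis of Theorem~\ref{TBasisGen}, and invoke Lemma~\ref{lem:Main}. The only cosmetic difference is that the paper acts by an arbitrary $x\in R_\al$ rather than reducing first to generators, which makes the argument marginally shorter but is otherwise identical in content.
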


\begin{proof}
By Lemma~\ref{LITauInv}, we have $\tau(I_\pi)=I_\pi$. So it is enough to prove that $I_\pi$ is a left ideal. 
  Now, for $x \in R_\al$ and $w,v\in\Si^\pi$, we have 
  \begin{align*}
    x \psi_w y_\pi \Sym_\pi e_\pi\psi_v^\tau &= x \psi_w \Sym_\pi e(\bi_\pi) y_\pi e_\pi \psi_v^\tau \\
           &= \sum_{u \in \Si_d} {\psi_u f_u \Sym_\pi e(\bi_\pi)} y_\pi e_\pi \psi_v^\tau \subseteq I_\pi
  \end{align*}
  by Theorem~\ref{TBasisGen} for the second equality and Lemma~\ref{lem:Main} for the inclusion.
\end{proof}

We can now complete the proof of Theorem~\ref{thm:Ideal}. 
We have already proved that $I_\pi$ is an ideal containing $e(\bi_\pi)$. By definition, $I_\pi$ contains $I_{>\pi}$ which by induction is equal to $\sum_{\si > \pi}R_\al e(\bi_\si) R_\al$. Therefore $I_\pi \supseteq \sum_{\si \geq \pi}R_\al e(\bi_\si) R_\al$.

On the other hand, $I_\pi$ is spanned by $I_{>\pi}$ and elements from 
$$\psi_w y_\pi \Sym_\pi e_\pi  \psi_v^\tau \subseteq R_\al e(\bi_\pi) R_\al\qquad(w, v \in \Si^\pi).
$$ 
Therefore $I_\pi \subseteq \sum_{\si \geq \pi}R_\al e(\bi_\si) R_\al$, and so we have equality.
This concludes the proof of Theorem~\ref{thm:Ideal}. 

\subsection{Affine cellular basis}
The following lemma shows that the cell ideals exhaust the algebra $R_\al$.

\begin{Lemma} \label{LRepeated}
If a two-sided ideal $J$ of $R_\al$ contains all idempotents $e(\bi_\pi)$ with $\pi\in\Pi(\al)$, then $J=R_\al$. In particular, $\sum_{\pi\in \Pi(\al)}I_{\pi}'=R_\al$. 
\end{Lemma}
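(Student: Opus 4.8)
The plan is to show that any proper (homogeneous) two-sided ideal of $R_\al$ must omit at least one of the idempotents $e(\bi_\pi)$, proving this first over a field by means of the classification of graded irreducible $R_\al$-modules, then over $\Z$ by a reduction-modulo-$p$ argument; the ``in particular'' clause will then follow at once from Theorem~\ref{thm:Ideal}.

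So first I would treat the case where $k$ is a field. Suppose $J \subsetneq R_\al(k)$ is a proper homogeneous two-sided ideal with $e(\bi_\pi)\in J$ for all $\pi\in\Pi(\al)$. Being a proper graded left ideal, $J$ is contained in a \emph{graded} maximal left ideal $\mathfrak m$ (apply Zorn's lemma to the poset of proper graded left ideals containing $J$, noting that a union of a chain of such cannot contain the degree-zero element $1$). Since $J$ is two-sided and $J\subseteq\mathfrak m$, one has $J\cdot(R_\al(k)/\mathfrak m)=0$, so $L:=R_\al(k)/\mathfrak m$ is a graded irreducible $R_\al(k)$-module annihilated by every $e(\bi_\pi)$. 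By \cite[Theorem~7.2]{KR} (recalled in Section~\ref{SPrel}), $L\cong L(\pi)\langle m\rangle$ for some $\pi\in\Pi(\al)$ and $m\in\Z$; but $e(\bi_\pi)L(\pi)\neq 0$, hence $e(\bi_\pi)L\neq 0$, a contradiction. Therefore $J=R_\al(k)$ when $k$ is a field.

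Next I would pass to $k=\Z$. Set $I:=\sum_{\pi\in\Pi(\al)}R_\al(\Z)\,e(\bi_\pi)\,R_\al(\Z)$ and $Q:=R_\al(\Z)/I$; it suffices to show $Q=0$. Since $e(\bi_\pi)$ is homogeneous of degree $0$ and, by Theorem~\ref{TBasisGen}, each graded component of $R_\al(\Z)$ is free of finite rank over $\Z$, each graded component $Q_n$ is a finitely generated $\Z$-module. Using right-exactness of $-\otimes_\Z k$ together with $R_\al(\Z)\otimes_\Z k\cong R_\al(k)$, one obtains $Q\otimes_\Z k\cong R_\al(k)/\sum_\pi R_\al(k)\,e(\bi_\pi)\,R_\al(k)$, which vanishes for $k=\QQ$ and for every $k=\F_p$ by the field case already proved. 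Hence each finitely generated $\Z$-module $Q_n$ satisfies $Q_n\otimes\QQ=0$ (so $Q_n$ is finite) and $Q_n=pQ_n$ for all primes $p$; a finite abelian group with this last property is trivial, so $Q_n=0$ for all $n$ and $Q=0$. This is exactly the kind of ``standard argument'' passing from fields to $\Z$ used at the end of the proof of Lemma~\ref{lem:bigWords}. Thus, over any ground ring, a two-sided ideal containing all the $e(\bi_\pi)$ contains $I=R_\al$, which proves the first assertion. For the last statement, by Theorem~\ref{thm:Ideal} we have $I_\pi=\sum_{\si\geq\pi}R_\al e(\bi_\si)R_\al$ for each $\pi$, whence $\sum_{\pi\in\Pi(\al)}I_\pi'=\sum_{\pi\in\Pi(\al)}I_\pi=\sum_{\pi\in\Pi(\al)}R_\al e(\bi_\pi)R_\al$, a two-sided ideal containing every $e(\bi_\pi)$; by the first part it equals $R_\al$. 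The only genuine subtlety here is the field-to-$\Z$ reduction: one must record that the graded pieces of $Q$ are finitely generated over $\Z$, since for a general $\Z$-module ``torsion and divisible'' does not force vanishing.
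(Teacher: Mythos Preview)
Your proof is correct and follows essentially the same approach as the paper: the field case via a maximal graded left ideal and the classification of graded simples, followed by a reduction to $\Z$ (which the paper dismisses as ``a standard argument'' but which you spell out carefully). Your derivation of the ``in particular'' clause via Theorem~\ref{thm:Ideal} is also correct and is implicitly what the paper has in mind, since $\sum_\pi I_\pi'=I_{\pi_{\min}}$ is already known to be a two-sided ideal containing each $e(\bi_\pi)$.
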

\begin{proof}
If $J\neq R_\al$, let $I$ be a maximal (graded) left ideal containing $J$. Then $R_\al / I \cong L(\pi)$ for some $\pi$. Then $e(\bi_\pi)L(\pi)\neq 0$, which contradicts the assumption that $e(\bi_\pi)\in J$. This argument proves the lemma over any field, and then it also follows for $\Z$. 
\end{proof}


\begin{Theorem}\label{cl:basis}
For each $\pi\in\Pi(\al)$ pick a $\Z$-basis $\{b_{\pi, x}\}_{x\in X(\pi)}$ of $\Sym_\pi$. Then
\begin{equation}\label{ECellBasis}
\{\psi_w y_\pi b_{\pi, x} e_\pi \psi_v^\tau\ |\ \pi \in \Pi(\al),\ x\in X(\pi),\ v, w \in \Si^\pi\}\end{equation}
is a $\Z$-basis of $R_\al$. In particular, $R_\al = \bigoplus_{\pi \in \Pi(\al)} I_\pi'
$. 
\end{Theorem}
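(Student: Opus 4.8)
The plan is to combine the spanning statement from Lemma~\ref{LRepeated} with the dimension formula of Proposition~\ref{PDim}, using a degree-counting argument exactly analogous to the one in the proof of Theorem~\ref{thm:nHCell}. First I would establish that the set \eqref{ECellBasis} spans $R_\al$ over $\Z$. Lemma~\ref{LRepeated} gives $\sum_{\pi\in\Pi(\al)}I_\pi' = R_\al$, so it suffices to check that each $I_\pi'$ is spanned by $\{\psi_w y_\pi b_{\pi,x} e_\pi \psi_v^\tau \mid x\in X(\pi),\ v,w\in\Si^\pi\}$. By definition $I_\pi' = \Z\text{-}\spa\{\psi_w y_\pi \Sym_\pi e_\pi \psi_v^\tau \mid w,v\in\Si^\pi\}$, and since $\{b_{\pi,x}\}_{x\in X(\pi)}$ is a $\Z$-basis of $\Sym_\pi$ and $\Sym_\pi$ commutes with $y_\pi$ and with $e_\pi$ (Lemma~\ref{LComm}, noting $y_\pi\in\Pol_d$ and $e_\pi = \psi_\pi y_\pi e(\bi_\pi)$), every element $\psi_w y_\pi b e_\pi \psi_v^\tau$ with $b\in\Sym_\pi$ is a $\Z$-linear combination of the proposed spanning elements. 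Hence \eqref{ECellBasis} spans $R_\al$.

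Next I would show linear independence by a graded-dimension count. Take each $\{b_{\pi,x}\}$ to be a homogeneous basis of $\Sym_\pi$ (e.g. products of monomial symmetric functions in the blocks); this is harmless since graded dimension does not depend on the choice of basis, and a homogeneous spanning set of the right total graded size must be a basis. Now I compute
\[
\sum_{\pi\in\Pi(\al)}\ \sum_{x\in X(\pi)}\ \sum_{v,w\in\Si^\pi} q^{\deg(\psi_w e(\bi_\pi)) + \deg(b_{\pi,x}) + \deg(y_\pi e(\bi_\pi)) + \deg(e_\pi\ \text{relative shift})}.
\]
The key bookkeeping facts are: $\deg(e_\pi)=\deg(\psi_\pi y_\pi e(\bi_\pi))$ with $\psi_\pi$ contributing a fixed negative amount and $y_\pi e(\bi_\pi)$ a fixed positive amount (these are the analogues of $\de_a$ and $e_a$ in the nilHecke computation, and they combine to a shift determined by $\shift(\pi)$); $\sum_{x\in X(\pi)} q^{\deg(b_{\pi,x})} = \dim_q \Sym_\pi = \prod_{k=1}^N \prod_{m=1}^{p_k}\frac{1}{1-q^{2m}} = l_\pi$ by \eqref{ELaPi} and Theorem~\ref{TBasis}; and $\sum_{w\in\Si^\pi} q^{\deg(\psi_w e(\bi_\pi))}$ together with the $\shift(\pi)$ contribution assembles into $c_\pi$. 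After the same cancellation pattern as in Theorem~\ref{thm:nHCell} (the negative degree of $\psi_\pi$ cancels against part of the $\Sym_\pi$ Poincaré polynomial in the way that $q^{-a(a-1)}\prod(1-q^{2r})/(1-q^2)$ cancels $q^{a(a-1)}\prod 1/(1-q^{2r})$ there), the sum collapses to $\sum_{\pi\in\Pi(\al)} l_\pi c_\pi^2$, which equals $\dim_q R_\al$ by Proposition~\ref{PDim}. A homogeneous spanning set whose cardinality generating function equals $\dim_q R_\al$ is automatically $\Z$-linearly independent (one works over a field first, then over $\Z$ using that $R_\al(\Z)\otimes_\Z k = R_\al(k)$ and the basis is defined over $\Z$), so \eqref{ECellBasis} is a $\Z$-basis.

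Finally, the statement $R_\al = \bigoplus_{\pi\in\Pi(\al)} I_\pi'$ follows immediately: Lemma~\ref{LRepeated} gives the sum, and the disjointness of the basis elements across different $\pi$ (each $I_\pi'$ is spanned by a distinct subset of the established basis) gives the directness.

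I expect the main obstacle to be the degree bookkeeping in the middle step — in particular pinning down precisely how $\deg(\psi_\pi e(\bi_\pi))$, $\deg(y_\pi e(\bi_\pi))$, and the grading shift $\shift(\pi)$ built into $\bar\De(\pi)$ interact so that the cancellation reproduces $l_\pi c_\pi^2$ term by term. One must be careful that $c_\pi = q^{\shift(\pi)}\sum_{w\in\Si^\pi} q^{\deg\psi_w e(\bi_\pi)}$ as defined before Proposition~\ref{PDim} matches the product of the $\psi_w$-contribution and the $e_\pi$-contribution appearing here, and that the $q^{\deg(\psi_\pi)}$ factor (with $\psi_\pi$ a product of longest elements $w_0^k$ in the nilHecke factors) cancels the "extra" part of $l_\pi = \prod_k l_{p_k}$ exactly as the $q^{-a(a-1)}$ factor does in Theorem~\ref{thm:nHCell}. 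Once the per-$\pi$ contribution is verified to be $l_\pi c_\pi^2$, everything else is formal.
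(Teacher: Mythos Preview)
Your proposal is correct and follows exactly the paper's approach: spanning via Lemma~\ref{LRepeated}, linear independence via the graded dimension formula of Proposition~\ref{PDim}. The paper's proof is two sentences and leaves the degree bookkeeping implicit; you have simply unpacked it.

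Since you flag the bookkeeping as the potential obstacle, here is the confirmation that it goes through cleanly. From the proof of Lemma~\ref{lem:nilHecke} one has $\deg(\psi_{k,r}e(\bi_\pi))=-2$, so $\deg(\psi_\pi e(\bi_\pi))=-2\sum_k\ell(w_0^k)=-\sum_k p_k(p_k-1)$. Also $\deg(y_\pi)=\sum_k p_k(p_k-1)$ directly from the definition of $\de_{\be_k,p_k}$. Hence $\deg(e_\pi)=0$ and the degree of a typical element $\psi_w y_\pi b_{\pi,x} e_\pi \psi_v^\tau$ is
\[
\deg(\psi_w e(\bi_\pi))+\deg(\psi_v e(\bi_\pi))+\deg(b_{\pi,x})+\sum_k p_k(p_k-1).
\]
Summing over $w,v\in\Si^\pi$ and $x\in X(\pi)$ gives $q^{2\shift(\pi)}\big(\sum_{w\in\Si^\pi}q^{\deg\psi_w e(\bi_\pi)}\big)^2\cdot l_\pi=c_\pi^2\, l_\pi$, since $2\shift(\pi)=\sum_k p_k(p_k-1)$ and $\dim_q\Sym_\pi=l_\pi$. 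Summing over $\pi$ matches Proposition~\ref{PDim}, and your passage from $\Z$ to a field and back is fine.
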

\begin{proof}
The elements of (\ref{ECellBasis}) span $R_\al$ in view of Lemma~\ref{LRepeated}. 
So it remains to apply Proposition~\ref{PDim}. 
\end{proof}

\begin{Theorem}\label{TCellMod}
  Let $\pi\in \Pi(\al)$, and $\bar R_\al := R_\al / I_{>\pi}$. Then
  \begin{enumerate}
  \item[{\rm (i)}] the map $\Sym_\pi \to \bar e_\pi \bar R_\al \bar e_\pi,\ b\mapsto \bar b\bar e_\pi$ is an isomorphism of graded algebras;
  \item[{\rm (ii)}] $\bar R_\al \bar e_\pi$ is a free right $\bar e_\pi \bar R_\al \bar e_\pi$-module with basis $\{\bar \psi_w \bar y_\pi \bar e_\pi\ |\ w \in \Si^\pi\}$;
  \item[{\rm (iii)}] $\bar e_\pi \bar R_\al $ is a free left $\bar e_\pi \bar R_\al \bar e_\pi$-module with basis $\{\bar e_\pi \bar \psi_v^\tau\ |\ v \in \Si^\pi\}$;
  \item[{\rm (iv)}] multiplication provides an isomorphism
      \[
        \bar R_\al \bar e_\pi \otimes_{\bar e_\pi \bar R_\al \bar e_\pi} \bar e_\pi \bar R_\al \iso \bar R_\al \bar e_\pi \bar R_\al;
      \]
\item[{\rm (v)}]  $\bar R_\al \bar e_\pi \bar R_\al = I_\pi / I_{>\pi}$.
  \end{enumerate}
 \end{Theorem}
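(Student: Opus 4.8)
The plan is to prove the five statements of Theorem~\ref{TCellMod} essentially by transporting the nilHecke results of Section~\ref{SNH} through the maps $\theta_k$ and combining them with the basis Theorem~\ref{cl:basis} and the dimension formula Proposition~\ref{PDim}. First I would set up notation: write $\bar x$ for the image of $x\in R_\al$ in $\bar R_\al=R_\al/I_{>\pi}$, and recall that by Theorem~\ref{thm:Ideal} together with \eqref{EInduction} we have $I_{>\pi}=\sum_{\si>\pi}R_\al e(\bi_\si)R_\al$, so that $\bar e(\bi)=0$ for all $\bi>\bi_\pi$ by Lemma~\ref{lem:bigWords}. The key structural input is the map $\theta:H_{p_1}\otimes\cdots\otimes H_{p_N}\to (e(\bi_\pi)R_\al e(\bi_\pi)+I_{>\pi})/I_{>\pi}$ from Lemma~\ref{LNew}; its image lies in $I_\pi/I_{>\pi}$, and by Theorem~\ref{thm:nHCell} each $H_{p_k}$ has the affine cellular basis $\psi_w b_x \de_{p_k} e_{p_k}\psi_v^\tau$.

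For part (i), I would argue that $\bar e_\pi\bar R_\al\bar e_\pi$ is spanned by the images of the basis elements of Theorem~\ref{cl:basis} that are sandwiched by $e_\pi$ on both sides; since $e_\pi=\psi_\pi y_\pi e(\bi_\pi)$ is (the image of) an idempotent and $e_\pi$ absorbs $\Si^\pi$-type factors only trivially, one checks $\bar e_\pi\bar\psi_w\bar y_\pi b_{\pi,x}\bar e_\pi\bar\psi_v^\tau\bar e_\pi$ is nonzero only when $w=v=1$, leaving exactly the span of $\overline{b_{\pi,x}e_\pi}$, $x\in X(\pi)$ — i.e.\ $\overline{\Sym_\pi e_\pi}$. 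The map $b\mapsto\bar b\bar e_\pi$ is then a surjective graded algebra homomorphism by Lemma~\ref{LComm}, and it is injective by comparing graded dimensions: $\dim_q\Sym_\pi=l_\pi$, and the count of surviving basis elements shows $\dim_q\bar e_\pi\bar R_\al\bar e_\pi=l_\pi$ as well. This is the analogue of Theorem~\ref{thm:nHFacts}(iii) lifted through $\theta$.

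For parts (ii) and (iii), I would use that $\bar R_\al\bar e_\pi$ is spanned by $\bar\psi_w\bar y_\pi b_{\pi,x}\bar e_\pi\bar\psi_v^\tau\bar e_\pi$; again the $\bar\psi_v^\tau\bar e_\pi$ on the right forces $v=1$, so $\bar R_\al\bar e_\pi$ is spanned by $\{\bar\psi_w\bar y_\pi\bar e_\pi\cdot\overline{b_{\pi,x}e_\pi}\mid w\in\Si^\pi,\ x\in X(\pi)\}$, which exhibits $\{\bar\psi_w\bar y_\pi\bar e_\pi\}_{w\in\Si^\pi}$ as a spanning set over $\bar e_\pi\bar R_\al\bar e_\pi\cong\Sym_\pi$; freeness (linear independence) follows from Theorem~\ref{cl:basis} since no linear relation among these can hold without producing one among the global basis elements. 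Part (iii) is symmetric, applying $\tau$ and using $\tau(I_{>\pi})=I_{>\pi}$ (Lemma~\ref{LITauInv}), $\psi_\pi^\tau=\psi_\pi$ \eqref{ETauInv}, and Lemma~\ref{LComm}; this mirrors Lemma~\ref{cor:nHFree}. Part (iv): the multiplication map is surjective onto $\bar R_\al\bar e_\pi\bar R_\al$ by definition, and using the bases from (ii),(iii) over the ring $\bar e_\pi\bar R_\al\bar e_\pi$, the source is free with basis $\{\bar\psi_w\bar y_\pi\bar e_\pi\otimes\bar e_\pi\bar\psi_v^\tau\mid w,v\in\Si^\pi\}$ (after absorbing the middle $\Sym_\pi$), whose image $\{\bar\psi_w\bar y_\pi\bar e_\pi\bar\psi_v^\tau\}$ together with $\overline{\Sym_\pi}$-coefficients is precisely a $\Z$-basis of $I_\pi/I_{>\pi}$ by Theorem~\ref{cl:basis}; so the map is injective, giving (iv), and simultaneously identifying $\bar R_\al\bar e_\pi\bar R_\al=I_\pi/I_{>\pi}$, which is (v).

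The main obstacle I anticipate is part (iv): establishing injectivity of the multiplication map requires knowing that the tensor product over the (non-trivial, infinite-dimensional) polynomial ring $\bar e_\pi\bar R_\al\bar e_\pi\cong\Sym_\pi$ is free on the expected basis and that no collapse occurs when multiplying out — i.e.\ that the middle $\Sym_\pi$ factor is faithfully carried across. This is exactly where the dimension formula Proposition~\ref{PDim} and the explicit basis Theorem~\ref{cl:basis} must be used to rule out any coincidental relations, rather than a purely formal argument. The nilHecke prototype (the proof of Theorem~\ref{thm:nHCell} together with Lemma~\ref{cor:nHFree} and Corollary~\ref{CSurj}) gives the template, but one must check that passing through $\theta$ and quotienting by $I_{>\pi}$ does not introduce degeneracies — which is guaranteed precisely because the graded dimensions match up on the nose.
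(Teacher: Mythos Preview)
Your approach is essentially the same as the paper's, and correct in outline. Both arguments rest on the affine cellular basis of Theorem~\ref{cl:basis} together with the nilHecke structure transported via $\theta$ (Lemma~\ref{LNew}), and the crux in each is the vanishing $\bar e_\pi\bar\psi_v^\tau\bar e_\pi=0$ for $1\neq v\in\Si^\pi$.

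Two points where you should sharpen the write-up. First, the paper proves (v) \emph{first}, directly from Theorem~\ref{thm:Ideal} and the definition of $I_\pi'$ (which is visibly contained in $R_\al e_\pi R_\al$); this immediately gives $\bar R_\al\bar e_\pi=\bar I_\pi'\bar e_\pi$, which you are tacitly using when you pass to the spanning set $\bar\psi_w\bar y_\pi b_{\pi,x}\bar e_\pi\bar\psi_v^\tau\bar e_\pi$ with $\si=\pi$ only. Your ordering (deriving (v) alongside (iv)) works, but the reduction to $\si=\pi$ elements in (i)--(iii) logically depends on knowing $\bar e_\pi\in I_\pi/I_{>\pi}$ and that $I_\pi$ is an ideal, so it is cleaner to state this upfront.

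Second, your ``one checks $\bar e_\pi\bar\psi_w\bar y_\pi b_{\pi,x}\bar e_\pi\bar\psi_v^\tau\bar e_\pi$ is nonzero only when $w=v=1$'' is the real content and deserves the two-step argument the paper gives: Lemma~\ref{lem:ribbonsOnly} forces $v$ to be a block permutation (else the idempotents $e(\bi_\pi)$ on either side mismatch), and then Lemma~\ref{LNew} lets you invoke the nilHecke relation $\psi_v\psi_{w_0}=0$ via $\theta$ to kill nontrivial block permutations. The paper then obtains (i) by intersecting the one-sided bases of $\bar R_\al\bar e_\pi$ and $\bar e_\pi\bar R_\al$ inside the global basis, rather than by a separate graded-dimension count; your dimension comparison is fine too, but note it already presupposes linear independence of the surviving elements, which is exactly what Theorem~\ref{cl:basis} gives.
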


\begin{proof}
(v) follows from Theorem~\ref{thm:Ideal}.

Now pick a $\Z$-basis $\{b_x\}_{x\in X}$ of $\Sym_\pi$. We next prove that
\begin{equation}\label{ELeft}
\{\bar \psi_w  \bar y_\pi \bar b_x \bar e_\pi\mid w \in \Si^\pi,\ x\in X\}
\end{equation}
is a $\Z$-basis of $\bar R_\al \bar e_\pi$ and 
\begin{equation}\label{ERight}
\{\bar\psi_\pi  \bar y_\pi \bar b_x \bar e_\pi \bar\psi_v^\tau\mid v \in \Si^\pi,\ x\in X\}
\end{equation}
is a $\Z$-basis of $\bar e_\pi \bar R_\al$.

To prove that (\ref{ELeft}) is a basis of $\bar R_\al \bar e_\pi$, note that $\bar R_\al \bar e_\pi \bar R_\al= \bar I_\pi=\bar I_\pi'$ by (v), so $\bar R_\al \bar e_\pi  = \bar I_\pi' \bar e_\pi$. A $\Z$-spanning set for $\bar R_\al \bar e_\pi$ is therefore given by $\bar \psi_w \bar y_\pi \bar b_x  \bar e_\pi \bar \psi_v^\tau  \bar e_\pi$ where $v, w \in \Si^\pi$. Using Lemma~\ref{lem:ribbonsOnly}, we have $\bar e_\pi \bar \psi_v^\tau \bar e_\pi = 0$ unless $v$ is a block permutation. On the other hand, if $v$ is a nontrivial block permutation, then $\bar e_\pi  \bar \psi_v^\tau \bar e_\pi = \bar e_\pi \bar \psi_v^\tau \bar \psi_\pi \bar y_\pi \bar e(\bi_\pi) = 0$ in view of Lemma~\ref{LNew}. We can therefore refine our spanning set to (\ref{ELeft}), which is $\Z$-linearly independent by Theorem~\ref{cl:basis}. 

To prove that (\ref{ERight}) is a basis of $ \bar e_\pi \bar R_\al$, note that by definition we have $\bar e_\pi \bar R_\al = \bar \psi_\pi \bar y_\pi \bar e(\bi_\pi) \bar R_\al \subseteq \bar \psi_\pi \bar e(\bi_\pi) \bar R_\al$. On the other hand, by (\ref{EZD}) and Lemma~\ref{LNew}, we have $\bar\psi_\pi \bar e(\bi_\pi) = \bar e_\pi \bar \psi_\pi \bar e(\bi_\pi)$, hence $\bar \psi_\pi \bar e(\bi_\pi) \bar R_\al \subseteq \bar e_\pi \bar R_\al$. Thus $\bar \psi_\pi \bar e(\bi_\pi) \bar R_\al = \bar e_\pi \bar R_\al$. A spanning set for $\bar \psi_\pi \bar e(\bi_\pi)  \bar R_\al$ is given by 
$$\bar \psi_\pi \bar e(\bi_\pi) \bar\psi_w  \bar y_\pi \bar b_x \bar e_\pi \bar \psi_v^\tau$$ 
for $v, w \in \Si^\pi$ and $x\in X$. As in the previous paragraph, this term is zero for $w \neq 1$. The spanning set thus reduces to the elements of (\ref{ERight}),
which are linearly independent by Theorem~\ref{cl:basis}. 

(i) We have $\bar e_\pi  \bar R_\al  \bar e_\pi = \bar e_\pi \bar R_\al \bigcap \bar R_\al \bar e_\pi$, and both of the sets on the right are spanned by a subset of our basis (\ref{ECellBasis}) of $\bar R_\al$. Hence $\bar e_\pi \bar R_\al \bigcap \bar R_\al \bar e_\pi$ has a basis given by those basis elements in both (\ref{ELeft}) and (\ref{ERight}). This is clearly the set of all $\bar \psi_\pi \bar y_\pi  \bar b_x\bar e_\pi = \bar e_\pi \bar b_x \bar e_\pi = \bar b_x \bar e_\pi$ for $x\in X$.
The map
  \begin{align*}
    \Sym_\pi \to \bar e_\pi \bar R_\al \bar e_\pi\ 
    b     \mapsto \bar e_\pi \bar b \bar e_\pi = \bar b \bar e_\pi
  \end{align*}
  is therefore an isomorphism.

(ii) follows immediately from (i) and the fact that (\ref{ELeft}) is a basis of $\bar R_\al \bar e_\pi$. 

(iii) follows from (i), the fact that (\ref{ERight}) is a basis of $ \bar e_\pi \bar R_\al$, and the equality 
$$ \bar\psi_\pi  \bar y_\pi \bar b_x \bar e_\pi \bar\psi_v^\tau= 
 \bar b_x \bar e_\pi \bar e_\pi \bar\psi_v^\tau= 
  \bar b_x \bar e_\pi \bar\psi_v^\tau.$$

 (iv) follows immediately by considering the bases constructed above.
\end{proof}

Recall the definition of an affine cellular algebra given in the introduction.

\begin{Corollary}
  The algebra $R_\al$ is affine cellular with cell chain given by the ideals $\{I_\pi\mid \pi \in \Pi(\al)\}$.
\end{Corollary}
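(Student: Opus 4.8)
The plan is to verify the three conditions in the definition of affine cell ideal for each quotient $A/I_{>\pi}$ and the ideal $I_\pi/I_{>\pi}$, using Theorem~\ref{TCellMod} as the engine. Fix $\pi\in\Pi(\al)$ and work in $\bar R_\al=R_\al/I_{>\pi}$, writing $\bar e_\pi$ for the image of $e_\pi$. Set $B:=\Sym_\pi$, which is an affine (indeed polynomial, by \eqref{ELaPi}) $k$-algebra; take its involution $\si$ to be the identity (it is commutative), and take $V$ to be the free $k$-module with basis indexed by $\Si^\pi$ together with the appropriate degree shifts so that the isomorphisms below are homogeneous. Define $\Delta:=V\otimes_k B$ and $\Delta':=B\otimes_k V$. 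First I would identify $\Delta$ with $\bar R_\al\bar e_\pi$ via $v_w\otimes b\mapsto \bar\psi_w\bar y_\pi\bar b\bar e_\pi$: by Theorem~\ref{TCellMod}(i) this is a right $B=\bar e_\pi\bar R_\al\bar e_\pi$-module map, and by Theorem~\ref{TCellMod}(ii) it is an isomorphism of $k$-modules, hence of $(\bar R_\al,B)$-bimodules. Symmetrically, identify $\Delta'$ with $\bar e_\pi\bar R_\al$ via $b\otimes v_w\mapsto \bar b\bar e_\pi\bar\psi_w^\tau$, which is a $(B,\bar R_\al)$-bimodule isomorphism by parts (i) and (iii); I must check that the right $\bar R_\al$-action matches the formula \eqref{EAction}, and this is exactly where the anti-involution $\tau$ enters — the key input being $\psi_\pi^\tau=\psi_\pi$ from \eqref{ETauInv} and the $\tau$-invariance of $I_{>\pi}$ from Lemma~\ref{LITauInv}, so that $(b\otimes v_w)\cdot a$ computed via $\tau$ lands correctly.

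Next I would establish the bimodule isomorphism $\alpha:I_\pi/I_{>\pi}\iso \Delta\otimes_B\Delta'$. By Theorem~\ref{TCellMod}(v), $I_\pi/I_{>\pi}=\bar R_\al\bar e_\pi\bar R_\al$, and Theorem~\ref{TCellMod}(iv) gives precisely that multiplication $\bar R_\al\bar e_\pi\otimes_{\bar e_\pi\bar R_\al\bar e_\pi}\bar e_\pi\bar R_\al\iso \bar R_\al\bar e_\pi\bar R_\al$ is an isomorphism; composing with the identifications of $\Delta$ and $\Delta'$ above yields $\alpha$. Condition (1), $\tau(I_\pi/I_{>\pi})=I_\pi/I_{>\pi}$, is Lemma~\ref{LITauInv} together with $\tau$-invariance of $I_{>\pi}$. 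For condition (3) I must check that the square relating $\alpha$, $\tau$, and the swap map $v\otimes b\otimes b'\otimes w\mapsto w\otimes\si(b')\otimes\si(b)\otimes v$ commutes. Tracing a basis element $\bar\psi_w\bar y_\pi\bar b\bar e_\pi\otimes\bar e_\pi\bar\psi_v^\tau$ through $\alpha^{-1}$, applying $\tau$, and using $\tau(\psi_w)=\psi_w^\tau$, $\tau(\psi_v^\tau)=\psi_v$, $\tau(\bar y_\pi\bar e_\pi)=\bar e_\pi\bar y_\pi$ (via Lemma~\ref{LComm} and \eqref{ETauInv}), and $\si=\id$ on the commutative algebra $B$, one sees the diagram closes; this is the computational heart and I expect it to be the main obstacle, since bookkeeping the grading shifts on $V$ and the precise placement of $\bar y_\pi$, $\bar e_\pi$ under $\tau$ requires care.

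Finally, to get the cell \emph{chain}, set $J_\pi':=I_\pi'$ for $\pi\in\Pi(\al)$ (ordered compatibly with the lexicographic order so that partial sums give the $I_\pi$). Theorem~\ref{cl:basis} gives the $k$-module decomposition $R_\al=\bigoplus_{\pi}I_\pi'$, and Lemma~\ref{LITauInv} gives $\tau(I_\pi')=I_\pi'$. The partial sums form the ideal filtration $0=I_{>\pi_{\max}}\subset\cdots\subset I_{\pi_{\min}}=R_\al$ by Theorem~\ref{thm:Ideal} (each $I_\pi$ is a two-sided ideal), and the work of the previous two paragraphs shows each successive quotient $I_\pi/I_{>\pi}$ is an affine cell ideal of $R_\al/I_{>\pi}$. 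Hence $R_\al$ is graded affine cellular with the asserted cell chain, and since each $\Sym_\pi$ is a genuine polynomial algebra the hypotheses of \cite[Theorem 4.4]{KoXi} apply, giving the Corollary on finite global dimension.
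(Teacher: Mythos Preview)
Your proposal is correct and follows essentially the same route as the paper's own proof: both take $B=\Sym_\pi$ with trivial involution and $V$ free on $\Si^\pi$, identify $\Delta\cong\bar R_\al\bar e_\pi$ and $\Delta'\cong\bar e_\pi\bar R_\al$ via Theorem~\ref{TCellMod}(i)--(iii), and then use parts (iv)--(v) for the isomorphism $\alpha$. The step you flag as the ``computational heart'' --- checking that the right $\bar R_\al$-action on $\Delta'$ matches \eqref{EAction} --- is exactly the paper's explicit Claim, proved by the same ingredients you list (\eqref{ETauInv} and Lemma~\ref{LITauInv}); the paper carries out that calculation in full and then remarks that the commutativity of the square in condition (iii) is verified by a similar argument.
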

\begin{proof}
  Fix $\pi \in \Pi(\al)$ and write $\bar R_\al:= R_\al / I_{>\pi}$. We must show that $I_\pi / I_{> \pi}$ is an affine cell ideal of $\bar R_\al$. We verify conditions (i)--(iii) of the definition given in the introduction.

  (i) This follows from Lemma~\ref{LITauInv}.

  (ii) Define $V$ to be the free $\Z$-module on the basis $\{w\mid w \in \Si^\pi\}$, and let $B:= \Sym_\pi$ with $\si$ being the trivial involution of $B$. We define the right $B$-module $\De := V \otimes_\Z B$. Theorem~\ref{TCellMod}(i),(ii) implies that the map 
  \[
    \De \to \bar R_\al \bar e_\pi, w \otimes b \mapsto \bar\psi_w \bar y_\pi \bar e_\pi \bar b
  \]
where $w \in \Si^\pi$, $b \in B$, is an isomorphism of right $B$-modules. We use this to define an $\bar R_\al$-$B$-bimodule structure on $\De$.

  (iii) Let $\De':= B\otimes_\Z V$ be the $B$-$\bar R_\al$-bimodule defined as in (\ref{EAction}). 
By Theorem~\ref{TCellMod}(i),(iii), the map 
  \begin{equation}\label{EDel}
    \De' \to \bar e_\pi \bar R_\al,\ b \otimes w \mapsto \bar b \bar e_\pi \bar \psi_w^\tau
  \end{equation}
  for $w \in \Si^\pi$, $b \in B$, defines an isomorphism of left $B$-modules. 
  
 \vspace{1 mm}
\noindent
  {\em Claim.} The map \eqref{EDel} is an isomorphism of $B$-$\bar R_\al$-bimodules.
  
  \noindent
  {\em Proof of Claim.} Note that under the identifications $\De \simeq \bar R_\al \bar e_\pi$ and $\De' \simeq \bar e_\pi \bar R_\al$ (as $B$-modules), the twist map $\tw^{-1}: \De' \to \De$ becomes the map
  \[
    \eta: \bar e_\pi \bar R_\al \to \bar R_\al \bar e_\pi, \bar b \bar e_\pi \bar \psi_w^\tau \mapsto \bar \psi_w \bar y_\pi \bar e_\pi \bar b.
  \]
  Therefore the claim is equivalent to $\eta(\bar b \bar e_\pi \bar \psi_w^\tau r) = r^\tau \bar \psi_w \bar y_\pi \bar e_\pi \bar b$ for $w \in \Si^\pi$, $b \in B$, and $r \in \bar R_\al$. We can write 
$$
\bar b \bar e_\pi \bar \psi_w^\tau r=\sum_{v \in \Si^\pi} \bar b_v  \bar e_\pi \bar \psi_v^\tau
$$  
for some $b_v\in \Sym_\pi$, and then, using (\ref{ETauInv}), we get 
  \begin{align*}
    \eta(\bar b \bar e_\pi \bar \psi_w^\tau r) &= \eta\Big(\sum_{v \in \Si^\pi} \bar b_v  \bar e_\pi \bar \psi_v^\tau\Big) 
      = \sum_{v \in \Si^\pi} \bar \psi_v \bar y_\pi \bar e_\pi \bar b_v \\
      &= \sum_{v \in \Si^\pi} \bar \psi_v \bar y_\pi \bar \psi_\pi \bar y_\pi \bar e(\bi_\pi) \bar b_v 
      = \sum_{v \in \Si^\pi} \bar \psi_v \bar e(\bi_\pi) \bar y_\pi \bar \psi_\pi \bar b_v  \bar y_\pi\\
      &= \sum_{v \in \Si^\pi} \big(\bar b_v  \bar \psi_\pi \bar y_\pi \bar e(\bi_\pi) \bar \psi_v^\tau \big)^\tau \bar y_\pi
      = \sum_{v \in \Si^\pi} \big(\bar b_v  \bar e_\pi \bar \psi_v^\tau \big)^\tau \bar y_\pi\\
      &= \big(\bar b \bar e_\pi \bar \psi_w^\tau r\big)^\tau \bar y_\pi
      = \big(\bar b \bar \psi_\pi \bar y_\pi \bar e(\bi_\pi) \bar \psi_w^\tau r\big)^\tau \bar y_\pi\\
      &= r^\tau \bar \psi_w \bar e(\bi_\pi) \bar y_\pi \bar \psi_\pi \bar b \bar y_\pi
      = r^\tau \bar \psi_w \bar y_\pi \bar e_\pi \bar b.
  \end{align*}
The proof of the claim is now complete, and so we can identify $\De'$ with $\bar e_\pi \bar R_\al$.
  
  By Theorem~\ref{TCellMod}(iv), the map $\De \otimes_B \De' \to \bar I_\pi$ is an isomorphism. One shows that the diagram in part (iii) of the definition of a cell ideal is commutative using an argument similar to the one in the proof of the above claim.
\end{proof}

\end{document}